\documentclass[11pt,reqno]{amsart}

\usepackage{amscd,amsmath,amssymb,latexsym,amsthm,amsfonts}
\usepackage{enumerate,color,graphicx}
\usepackage[all]{xypic}
\usepackage{tikz}
\renewcommand\thesubsection{\thesection.\arabic{subsection}}
\renewcommand\subsection{\refstepcounter{subsection}\par\addvspace{\bigskipamount}\noindent{\normalfont\normalsize\bfseries\thesubsection.\hspace*{.5em}}}

\usepackage{hyperref}
\hypersetup{
 unicode=false, % non-Latin characters in Acrobat bookmarks
 pdftoolbar=true, % show Acrobat toolbar?
 pdfmenubar=true, % show Acrobat menu?
 pdffitwindow=false, % window fit to page when opened
 pdfstartview={FitH}, % fits the width of the page to the window
 colorlinks=true, % false: boxed links; true: colored links
 linkcolor=magenta, % color of internal links %% blue
 citecolor=blue, % color of links to bibliography %% blue
 filecolor=black, % color of file links
 urlcolor=black % color of external links % % blue
}

\theoremstyle{theorem}
\newtheorem{theo}{Theorem}[section]
\newtheorem{coro}[theo]{Corollary}
\newtheorem{lemma}[theo]{Lemma}
\newtheorem{prop}[theo]{Proposition}

\newtheorem{theosub}{Theorem}[subsection]
\newtheorem{propsub}[theosub]{Proposition}

\theoremstyle{definition}
\newtheorem{note}[theo]{Note}
\newtheorem{rema}[theo]{Remark}
\newtheorem{defin}[theo]{Definition}
\newtheorem{exams}[theo]{Examples}

\def\nid{\noindent}
\def\cf{\mathfrak{c}}
\def\of{\mathfrak{o}}

\def\sue{\subseteq}
\def\OS{\text{\sf O}}

\def\SL{\mathsf{S}}

\def\ur{{{\rlap{$\ $}\hbox{$\uparrow$}}}}%
\def\setof#1#2{\{#1 \ | \ #2\} }

\def\setof#1#2{\{#1 \, |\, #2\} }
\def\qtq#1{\quad\text{#1}\quad}

\def\TOP{\text{\bf Top}}

\def\loc{\text{\bf Loc}}
\newcommand{\tbigcap}{\mathop{\textstyle \bigcap}}
\newcommand{\tbigcup}{\mathop{\textstyle \bigcup }}%%
\newcommand{\tbigvee}{\mathop{\textstyle \bigvee }}%%
\newcommand{\tbigwedge}{\mathop{\textstyle \bigwedge }}%%

\def \lf{f_{\leftarrow}}
\def \rf{f_{\rightarrow}}
\def \pf {f^{-1}}
\def \pg {g^{-1}}
\def \clf {f^\cf _\leftarrow}
\def \clg {g^\cf _\leftarrow}

\def \mt {\mathcal{T}}
\def \ms {\mathcal{S}}
\def \mo {\mathcal{O}}
\def \mc {\mathcal{C}}
\def \mcs {\mathcal{C}(\mathcal{S})}
\def \mos {\mathcal{O}(\mathcal{S})}
\def \mot {\mathcal{O}(\mathcal{T})}

\def \bds {\mathcal{B}d(\mathcal{S})}
\def \bdt {\mathcal{B}d(\mathcal{T})}
\def\Bds{\text{\bf B0ds}}

\begin{document}

\title[Basic zero-dimensional spaces: a unifying framework ]{Basic zero-dimensional spaces: a unifying framework for continuity and openness}

\author[João Areias]{João Areias}

\address{\hspace*{-\parindent}Department of Mathematics, University of Coimbra,  3000-143 Coimbra,\newline Portugal \newline {\it Email address}: {\tt uc2020217955@student.uc.pt}}

\author[Jorge Picado]{Jorge Picado}

\address{\hspace*{-\parindent}CMUC, Department of Mathematics, University of Coimbra,\newline  3000-143 Coimbra, Portugal \newline {\it Email address}: {\tt picado@mat.uc.pt}}

\subjclass[2020]{54A05, 06A12, 06D20, 06D22, 18F70, 54C05, 54C10}

\keywords{Closure space, closure operator, zero-dimensional, Heyting semilattice, nuclear range, $l$-morphism, Heyting lattice, frame, locale, Galois adjunction, localic map, open map, closed map, Frobenius condition}

\date{\today}

\begin{abstract}
Following a suggestion in \cite{EPP}, we establish the category of basic zero-dimensional spaces and their basic continuous maps. The objects are closure spaces with a distributive closure system
containing a specified meet-base of complemented members that make the category a common generalization of those of closure spaces, zero-dimensional topological spaces, Heyting semilattices, and locales (frames). We treat images, preimages, continuity, openness and closedness of maps between basic zero-dimensional spaces. Among our main results, we have a useful description of preimages for basic continuous maps (analogous to the one for localic preimage maps as coframe homomorphisms),
 and a Joyal-Tierney type theorem for basic open maps. A novel aspect of the category of basic zero-dimensional spaces is a duality principle that, among other things, enables results for basic closed maps to be obtained for free from those for basic open maps.
\end{abstract}

\maketitle

\section{Introduction}

%Articles such as [14], [10], and [7] have demonstrated that many results in point-free topology can be viewed as instances of more general results in the non-complete setting of Heyting semilattices. Working in this broader, non-complete framework not only sheds new light on existing results and their proofs, but also shows that the existence of arbitrary joins and meets is often unnecessary.
%
%The present article continues this line of research. Its primary motivation is a suggestion made at the end of [10] to investigate a category of so-called basic zero-dimensional (closure) spaces and basic continuous maps. Similar to the categories considered in [4] and [5], this category is intended to provide a unified framework encompassing the various categories of spaces discussed there, including closure spaces, zero-dimensional topological spaces, Heyting semilattices, and, in particular, locales, together with their corresponding notions of continuous maps.

Articles such as \cite{PPT}, \cite{EPP}, and \cite{ME20} have demonstrated that several results in point-free topology can be viewed as instances of more general results in the non-complete setting of Heyting semilattices.
Working in this broader, non-complete framework not only sheds new light on existing results and their proofs, but also shows that the existence of infinite joins and meets is often unnecessary.

The present article continues this line of research. Its primary motivation is a suggestion made at the end of \cite{EPP} to investigate a category of so-called {\em basic zero-dimensional \textup(closure\textup) spaces}
and {\em basic continuous maps}, similar to categories considered in
\cite{ME84} and \cite{ME04}. This category is intended to provide a  unified framework encompassing  such categories as those of closure spaces, zero-dimensional topological spaces, Heyting semilattices, and, in particular, locales, together with their corresponding notions of continuous maps.

\bigskip
The motivation for introducing this category of spaces is provided by the following two guiding examples. Let  $L$ be a locale (that is, a complete Heyting algebra). The lattice $\SL(L)$ of all subobjects ({\em sublocales}) is a coframe. Meets in $\SL(L)$ are just intersections; hence $\SL(L)$ is a closure system in $L$.

For each $a\in L$, there are two basic elements of $\SL(L)$, the {\em open} sublocale $\of a=\setof {a\to x}{x\in H}$  and the {\em closed} sublocale $\cf a=\ur a$, complemented to each other.
A fundamental property of the theory of locales asserts that every $S\in\SL(L)$ is of the form
\[
S=\tbigcap_{i\in J}(\cf a_i\vee \of b_i)=\tbigcap_{i\in J}(\cf a_i\vee \neg \cf b_i)
\]
(this expresses the fact that the  duals of lattices of
sublocales are {\em zero-dimensional}).

More generally, let $H$ be a Heyting semilattice with its (complete) lattice of ideals $\mathcal{I}dl(H)$. We still have the open and closed $\of a$, $\cf a$, which are elements of $\mathcal{I}dl(H)$, complemented to each other. The joins $\cf a\vee \of b$ generate in $\mathcal{I}dl(H)$ the class $\mathcal{B}d(H)$ of {\em basic domains} of $H$, that is, the
subsets of $H$ of the form
$$\tbigcap_{i\in J}(\cf a_i^1\vee \of b_i^1\vee \dots \vee \cf a_i^{n_i}\vee \of b_i^{n_i}).$$
Again, this is a coframe and, in particular, a closure system in $H$.

It is easy to see that basic domains are a generalization of sublocales.

The triples
$$(L,\cf L,\SL(L))  \quad\mbox{ and }\quad (H, \cf H, \mathcal{B}d(H))$$ are, among others, the motivation for {\em basic zero-dimensional spaces} (briefly, b0-space). These are
 triples $\mathcal{S}=(X,\mathcal{C}, \mathcal{B}d)$, where
	\begin{itemize}
		\item[1.]  $X$ is a set,
		
		\item[2.] $\mathcal{B}d$ is a closure system on $X$, distributive as a lattice, and
		
		\item[3.]  $\mathcal{C}\subseteq \mathcal{B}d$ is such that every $C\in \mc$  has a complement $\neg C$ in $\mathcal{B}d$, and every $M\in\mathcal{B}d$ is of the form
    \[M=\tbigcap_{i\in J}(C_i^1\vee \neg D_i^1\vee \dots \vee C_i^{n_i}\vee \neg D_i^{n_i}),\]
     for some $C^1_i, D^1_i, \ldots, C_i^{n_i}, D_i^{n_i}\in \mathcal{C}$.
	\end{itemize}

\bigskip
We will show in this paper that working within the context of b0-spaces not only makes the results far more general, but also illuminates the phenomena behind the proofs better.
Our main goal is to confirm that
b0-spaces provide a useful unifying framework for studying continuity and openness of maps, unifying
the modelling of continuous maps and open continuous maps in point-free topology, point-set topology and elsewhere \cite{EPP}.

In \cite{EPP}, the authors showed that some results in locales are consequences of some more general results in Heyting semilattices. In doing so they identified two main constraints of Heyting semilattices when one takes \emph{nuclear ranges} (see \ref{sec nuc e id} below) as the generalization of sublocales:
the system of nuclear ranges is not a coframe in general (in fact, it may even have no meets), and there is no non-complete analogue for the preimages of localic maps.

 By applying our results on b0-spaces to the category of Heyting semilattices, we can immediately overcome these problems: the system of substructures of a b0-space is always a coframe, and an analogue of localic preimages
exists for basic continuous maps.

One of the main results in \cite{EPP} is the generalization to Heyting semilattices of the Joyal-Tierney characterization of open localic maps.
In this paper, we obtain a similar result for basic open maps between b0-spaces. By applying it to Heyting semilattices we get an alternative proof for the Joyal-Tierney type theorem in Heyting semilattices \cite{EPP}.
Another notable feature of the category of b0-spaces is a duality principle that, among other things, enables results for basic closed maps to be obtained for free from those for basic open maps.

\bigskip
The paper is organized as follows:

Section 2 presents the necessary background information and terminology. Section 3 introduces the category of b0-spaces and basic continuous maps. Section 4 introduces basic images and preimages. We then characterise basic continuous maps, concluding that a map admits a localic-type preimage if and only if it is basic continuous (Section 5). We also prove that the basic preimage is a coframe homomorphism (Section 6). Section 7 applies the theory of b0-spaces to Heyting semilattices.
Section 8 characterises basic open maps, and Section 9 applies this to Heyting semilattices, providing a new proof of the Joyal–Tierney theorem for Heyting semilattices.
In Section 10, we present a duality principle in the category of b0-spaces, and use it to characterize basic closed maps and to present in the final section (Section 11) a generalization, to the non-complete setting of Heyting lattices, of the well-known theorem that characterizes closed localic maps.

Throughout the paper, we illustrate our main results by formulating them in the context of Heyting (semi)lattices.

\section{Preliminaries}\label{prelimi}

In this section we collect some facts and notation about closure operators, Heyting semilattices, frames and locales, coframes, and Galois adjunctions that we will need in
the following.
We use the standard notions and notation for partially
ordered sets and lattices as e.g. in \cite{DP} and \cite{PP}.

\subsection{\bf Galois adjunctions.}\setcounter{subsection}{1}
\setcounter{theosub}{0}
Given two partially ordered sets $X$ and $Y$, a {\em Galois adjunction} \cite{EKMS} between them consists of a pair of order-preserving maps $f \colon  X \to Y$ and $g \colon  Y \to X$ such that
$$
f(x) \leq y \Leftrightarrow x \leq g(y).
$$
  One says that $f$ is {\em left adjoint} to $g$ (and $g$ is {\em right adjoint} to $f$) and writes $f\dashv g$. Left and right adjoints, when they exist, are unique.
It is standard that (cf. \cite{EKMS,PP})
\begin{enumerate}[(G1)]
	\item If $f\dashv g$, then $fg\le \mathrm{id}_Y$ and $\mathrm{id}_X\le gf$,
	\item left adjoints preserve all existing suprema and right adjoints preserve all existing infima, and
	\item
	if $X,Y$ are complete lattices, then each $f\colon  X \to Y$ preserving all suprema has a right adjoint (given by  $g(y)=\tbigvee\{x\in X\mid f(x)\le y\}$), and each $g \colon  Y \to X$ preserving all infima has a left adjoint (given by  $f(x)=\tbigwedge\{y\in Y\mid g(y)\ge x\}$).
\end{enumerate}

\subsection{\bf Closure operators and closure systems.}
\setcounter{subsection}{2}
\setcounter{theosub}{0}
	A \emph{closure operator} on a poset $P$ is a map $\gamma \colon P\to P$ satisfying
\begin{equation}
\label{propriedade do fecho}\tag{2.2.1}
	a\leq \gamma(b)\Leftrightarrow \gamma(a)\leq \gamma(b) \quad\mbox{for every } a,b\in P.
\end{equation}

Given a set $X$, one usually talks about a closure operator on $X$ to mean a closure operator on $\mathcal{P}(X)$.
Closure operators on $X$ are closely related to the notion of a closure system on $X$.
 A {\em closure system} on a set $X$ is a subset $\mathcal C$ of $\mathcal{P}(X)$ such that $X\in \mathcal{C}$ and $\tbigcap\mathcal{X}\in \mathcal{C}$ for every $\mathcal{X} \subseteq \mathcal{C}$. A closure system is said to be \emph{topological} if it is closed under finite unions. One has the obvious

\begin{propsub}\label{sistemas e operadores de fecho} For each
closure system $\mathcal{C}$ on $X$,  the operator
	$\Gamma_\mathcal{C}(A)=\tbigcap\{Y\in \mathcal{C}\mid A\subseteq Y \}$
    is a closure operator.
\end{propsub}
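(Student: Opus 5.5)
The plan is to verify condition (\ref{propriedade do fecho}) directly for $\gamma=\Gamma_\mathcal{C}$ on $\mathcal{P}(X)$. Before that, we check that $\Gamma_\mathcal{C}$ is well defined and record two elementary facts about it.

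First, $\Gamma_\mathcal{C}$ is well defined. The family $\{Y\in\mathcal{C}\mid A\subseteq Y\}$ is nonempty because $X\in\mathcal{C}$. Since $\mathcal{C}$ is closed under arbitrary intersections, $\Gamma_\mathcal{C}(A)\in\mathcal{C}$. So $\Gamma_\mathcal{C}(A)$ is the least member of $\mathcal{C}$ containing $A$. From this description we get:
\begin{enumerate}[(a)]
\item $A\subseteq\Gamma_\mathcal{C}(A)$, since $A$ lies in every set being intersected;
\item if $A\subseteq Y$ with $Y\in\mathcal{C}$, then $\Gamma_\mathcal{C}(A)\subseteq Y$, since $Y$ is one of the sets being intersected.
\end{enumerate}

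Now we prove (\ref{propriedade do fecho}) for arbitrary $A,B\subseteq X$.

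For the forward direction, suppose $A\subseteq\Gamma_\mathcal{C}(B)$. Since $\Gamma_\mathcal{C}(B)\in\mathcal{C}$, fact (b) with $Y=\Gamma_\mathcal{C}(B)$ gives $\Gamma_\mathcal{C}(A)\subseteq\Gamma_\mathcal{C}(B)$.

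For the backward direction, suppose $\Gamma_\mathcal{C}(A)\subseteq\Gamma_\mathcal{C}(B)$. By fact (a), $A\subseteq\Gamma_\mathcal{C}(A)\subseteq\Gamma_\mathcal{C}(B)$.

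There is no real obstacle here. The only point needing care is that $\Gamma_\mathcal{C}(A)$ itself belongs to $\mathcal{C}$. This uses both defining properties of a closure system: $X\in\mathcal{C}$ ensures the intersection is over a nonempty family, and closure under intersections puts the result back in $\mathcal{C}$.
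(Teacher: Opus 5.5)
Your proof is correct. The paper gives no proof at all and simply calls the statement obvious. Your argument is the routine one being left to the reader: $\Gamma_\mathcal{C}(A)$ is the least member of $\mathcal{C}$ containing $A$, and each direction of (2.2.1) follows from this at once.
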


There is a canonical way \cite{ME09} of constructing a closure system from an $\mathcal{X}\subseteq\mathcal{P}(X)$.
Given a set $X$ and $\mathcal{X}\subseteq\mathcal{P}(X)$, set
\begin{align*}
	\Omega\mathcal{X}&= \{\tbigcap\mathcal{Y}\mid \mathcal{Y}\subseteq\mathcal{X}\},\quad\quad\quad\quad\quad\quad
	\mho \mathcal{X}=\{\tbigcup\mathcal{Y}\mid \mathcal{Y}\subseteq\mathcal{X}\},\\
	\Omega^\mathfrak{f} \mathcal{X}&= \{\tbigcap\mathcal{Y}\mid \mathcal{Y}\subseteq\mathcal{X} \text{,  $\mathcal{Y}$ is finite}\},
	\quad \mho^\mathfrak{f} \mathcal{X}= \{\tbigcup\mathcal{Y}\mid \mathcal{Y}\subseteq\mathcal{X}\text{,  $\mathcal{Y}$ is finite}\}.
\end{align*}

\begin{propsub}[\cite{ME09}]\label{prop1.4.6}
	Among the closure systems on $X$ containing $\mathcal{X}$,
	\begin{enumerate}[\em(1)]
		\item $\Omega\mathcal{X}$ is the least closure system, and
		\item $\Omega \mho^\mathfrak{f}\mathcal{X}$ is the least topological closure system.
	\end{enumerate}
\end{propsub}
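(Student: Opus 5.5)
For (1), I will first show that $\Omega\mathcal{X}$ is a closure system. We have $X=\tbigcap\emptyset\in\Omega\mathcal{X}$, using the usual convention that the empty intersection of subsets of $X$ is $X$. Now let $\{\tbigcap\mathcal{Y}_k\mid k\in K\}\subseteq\Omega\mathcal{X}$ with each $\mathcal{Y}_k\subseteq\mathcal{X}$. Then
\[
\tbigcap_{k\in K}\tbigcap\mathcal{Y}_k=\tbigcap\Bigl(\tbigcup_{k\in K}\mathcal{Y}_k\Bigr),
\]
and $\tbigcup_k\mathcal{Y}_k\subseteq\mathcal{X}$, so the intersection lies in $\Omega\mathcal{X}$.

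Next, $\Omega\mathcal{X}$ contains $\mathcal{X}$, since each $A\in\mathcal{X}$ equals $\tbigcap\{A\}$. Minimality is immediate: any closure system $\mathcal{C}\supseteq\mathcal{X}$ is closed under arbitrary intersections, so it contains every $\tbigcap\mathcal{Y}$ with $\mathcal{Y}\subseteq\mathcal{X}$. Hence $\Omega\mathcal{X}\subseteq\mathcal{C}$.

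For (2), applying (1) to the family $\mho^\mathfrak{f}\mathcal{X}$ shows that $\Omega\mho^\mathfrak{f}\mathcal{X}$ is a closure system. It contains $\mathcal{X}$, because $\mathcal{X}\subseteq\mho^\mathfrak{f}\mathcal{X}$ via singleton unions. For minimality, let $\mathcal{C}\supseteq\mathcal{X}$ be a topological closure system. Closure under finite unions gives $\mho^\mathfrak{f}\mathcal{X}\subseteq\mathcal{C}$. Here I need $\emptyset\in\mathcal{C}$ for the empty union; I read "closed under finite unions" as including the empty union, or else note that the intended convention covers this. Then (1) yields $\Omega\mho^\mathfrak{f}\mathcal{X}\subseteq\mathcal{C}$.

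The real work, and the step I expect to be the main obstacle, is showing that $\Omega\mho^\mathfrak{f}\mathcal{X}$ is itself closed under finite unions. It suffices to treat binary unions (plus $\emptyset$, which lies in $\mho^\mathfrak{f}\mathcal{X}$). Take $\tbigcap_{i\in I}U_i$ and $\tbigcap_{j\in J}V_j$ with all $U_i,V_j\in\mho^\mathfrak{f}\mathcal{X}$. By distributivity of union over arbitrary intersection in $\mathcal{P}(X)$,
\[
\Bigl(\tbigcap_{i\in I}U_i\Bigr)\cup\Bigl(\tbigcap_{j\in J}V_j\Bigr)=\tbigcap_{(i,j)\in I\times J}(U_i\cup V_j).
\]
Each $U_i\cup V_j$ is a finite union of members of $\mathcal{X}$, so the right-hand side lies in $\Omega\mho^\mathfrak{f}\mathcal{X}$.

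The edge cases with $I$ or $J$ empty need a separate check. If, say, $I=\emptyset$, the left-hand side is $X\cup(\cdots)=X$, and the right-hand side is the empty intersection, which is also $X$. So the displayed identity remains valid in that case.
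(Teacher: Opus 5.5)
Your proof is correct and complete. Your reading of ``closed under finite unions'' as including the empty union is in fact forced, not just convenient: $\emptyset\in\mho^\mathfrak{f}\mathcal{X}$ by definition, while for $X\neq\emptyset$ the family $\{X\}$ is a closure system containing $\mathcal{X}=\{X\}$ and closed under binary unions, yet it does not contain $\emptyset\in\Omega\mho^\mathfrak{f}\{X\}$.

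The paper gives no proof of its own and only cites Erné for this result. Your key identity $\bigl(\tbigcap_i U_i\bigr)\cup\bigl(\tbigcap_j V_j\bigr)=\tbigcap_{i,j}(U_i\cup V_j)$ is the instance $L=\mathcal{P}(X)$, $M=\mathcal{X}$ of formula \eqref{152ast} in Proposition~\ref{prop tecnica}. In that instance the join in $\Omega\mho^\mathfrak{f}\mathcal{X}$ is just the union, because union distributes over arbitrary intersections in $\mathcal{P}(X)$.
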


	A \emph{closure space} is a pair $(X, \mathcal{C})$, where $X$ is a set and $\mathcal{C}$ is a closure system on $X$.
	If $(X,\mathcal{C}_X)$ and $(Y, \mathcal{C}_Y)$ are closure spaces, a map $f\colon X\to Y$ is said to be {\em continuous} if $\pf[C]\in \mathcal{\mathcal{C}}_X$ for every $C\in \mathcal{C}_Y$.
We refer to \cite{ME09} for more information about closure operators and closure spaces.

\subsection{\bf Heyting semilattices.}\label{Heytsemil}
\setcounter{theosub}{0}
A \emph{Heyting semilattice} $H$ (also known as {\em implicative semilattice} \cite{ME20}, or {\em Brouwerian semilattice} \cite{Ko1}) is a meet-semilattice (with top element 1) in which every unary operator $\lambda_a=a\wedge(-)$ has a right adjoint $\gamma_a=a\to(-)$, that is,
\begin{equation}\label{Heyting}\tag{2.3.1}
a\wedge b \leq c\Leftrightarrow b\leq a\to c\quad\mbox{for every }a, b, c\in H.
\end{equation}
Lattices with an (Heyting) operation $\to$ satisfying \eqref{Heyting} are called
 \emph{Heyting lattices} (also {\em Heyting algebras}).

 We list below the basic properties of the Heyting operator in Heyting semilattices that we will need (\cite{Ko1}, Section 1]:
 	\begin{enumerate}[(H1)]
		\item $(a\to b) \wedge (a\to  c)=a\to ( b\wedge c)$.
		\item $a\leq b $ if and only if $a\to b=1$.
		\item  $b\leq a\to b$.
		\item $a\to(b\to c)=(a\wedge b)\to c =b\to (a\to c)$.
		\item $a\leq (a\to b)\to b$.
		\item  $b=((a\to b)\to b)\wedge (a\to b)$.
	\end{enumerate}

In a Heyting semilattice $H$ one has for each $a\in H$ the \emph{open set}
$$\of a=\setof{x\in H}{a\to x=x}=\setof {a\to x}{x\in H}$$ and the \emph{closed set} $$\cf a=\ur a=\{x\in H\mid x\ge a\}\ \mbox{ (\cite{EPP})}.$$ The system of all open subsets of $H$ is denoted by $\of H$, and the system of all closed subsets by $\cf H$. The subset $\{1\}$ of $H$ is denoted by $\OS$.

For every $a,b \in H$,
\begin{equation}\tag{2.3.2}
	a\leq b\Leftrightarrow \cf b \sue \cf a \qtq{and} a\leq b \Leftrightarrow\of a\sue \of b.
\end{equation}

Given $M,K\subseteq H,$ we will also need to consider the set
$$M\vartriangle K=\{m\wedge k \mid m\in M, \: k\in K\}.$$
When $H$ is a Heyting lattice, we have:
\[
\begin{aligned}
	&\of(0)=\OS,\; \of(1)=H,\ \ \of a\cap\of b=\of(a\wedge b) \ \text{ and }\ \of a \vartriangle \of b=\of(a \vee b),\\
	&\cf(1)=\OS,\; \cf(0)=H,\ \ \cf a\vartriangle\cf b=\cf(a\wedge b) \ \text{ and }\ \cf a\cap\cf b=\cf(a \vee b).
\end{aligned}
\]

A map between Heyting semilattices is \emph{continuous} (also known as an \emph{$l$-morphism} \cite{EPP}) if it has a left adjoint that preserves finite meets (including the top element). Continuous maps between Heyting semilattices are characterized by the following result:

\begin{propsub}[\cite{EPP}]\label{prop carac cont}
	A map between Heyting semilattices $f\colon H_1\to H_2$ is a continuous map if and only if the following conditions hold:
	\begin{enumerate}[\em(1)]
		\item $f(a)=1\ \Rightarrow \ a=1$.
		\item $f$ has a left adjoint $f^*$.
		\item For every $a\in H_1$ and $b\in H_2$, $f(f^*(b)\to a)=b\to f(a)$.
	\end{enumerate}
\end{propsub}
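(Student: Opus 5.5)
We must prove Proposition \ref{prop carac cont}: $f\colon H_1\to H_2$ is continuous (has a left adjoint preserving finite meets, including the top) if and only if (1)--(3) hold. The proof will use only the adjunction facts (G1), (G2) and the Heyting identities (H1)--(H6).

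\emph{Necessity.} Assume $f$ has a left adjoint $f^*$ preserving finite meets; (2) is then immediate. For (1), suppose $f(a)=1$. Then $1\le f(a)$, so by adjunction $f^*(1)\le a$. Since $f^*(1)=1$, we get $a=1$. For (3) we compare down-sets. For any $c\in H_2$,
\[c\le f(f^*(b)\to a)\iff f^*(c)\le f^*(b)\to a\iff f^*(b)\wedge f^*(c)\le a.\]
Because $f^*$ preserves binary meets, this last condition is $f^*(b\wedge c)\le a$, which holds iff $b\wedge c\le f(a)$, iff $c\le b\to f(a)$. Two elements with the same down-set are equal, so $f(f^*(b)\to a)=b\to f(a)$.

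\emph{Sufficiency.} Assume (1)--(3). We must show that $f^*$ preserves $1$ and binary meets.

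For the top, note $f(1)=1$, since $f$ is a right adjoint and so preserves the empty meet by (G2). Then (3) with $b=1$ and $a=f^*(1)$ gives
\[f\bigl(f^*(1)\to f^*(1)\bigr)=1\to f(f^*(1)).\]
The left side is $f(1)=1$, which only repeats what we know. So we instead combine (1) with (3) differently. Take $b=1$ and arbitrary $a$:
\[f(f^*(1)\to a)=1\to f(a)=f(a).\]
Now set $a=f^*(1)$. Then $f(1)=f(f^*(1))$, so $f(f^*(1))=1$, and (1) yields $f^*(1)=1$.

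For meets, monotonicity gives $f^*(b\wedge c)\le f^*(b)\wedge f^*(c)$, so only the reverse inequality is needed. Set $a=f^*(b\wedge c)$. By (G1), $b\wedge c\le f(a)$, so $c\le b\to f(a)$. Applying (3),
\[c\le f(f^*(b)\to a).\]
By adjunction this gives $f^*(c)\le f^*(b)\to a$, that is, $f^*(b)\wedge f^*(c)\le a=f^*(b\wedge c)$. This completes sufficiency.

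The step that needs the most care is $f^*(1)=1$. A naive substitution in (3) is circular, as shown above, and the argument only closes once (3) at $b=1$ is combined with condition (1) to force $f^*(1)=1$. The rest is routine manipulation of the adjunction.
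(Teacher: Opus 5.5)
Your proof is correct. The paper gives no proof of this proposition (it is quoted from \cite{EPP}), so there is no argument in the paper to compare against. Your argument is the natural adjunction-based one: necessity compares down-sets using $f^*(b\wedge c)=f^*(b)\wedge f^*(c)$, and sufficiency substitutes $a=f^*(b\wedge c)$ into the Frobenius identity.

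Your commentary on the top element needs one correction. The ``naive'' substitution $b=1$, $a=f^*(1)$ is not circular. It is the same substitution you then use. Its right-hand side is $1\to f(f^*(1))=f(f^*(1))$, so it already gives $f(f^*(1))=1$, which is exactly what you need; it does not merely repeat $f(1)=1$.

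More directly, (G1) gives $1\le f(f^*(1))$, hence $f(f^*(1))=1$ without using (3) at all, and condition (1) then forces $f^*(1)=1$. So condition (1) alone handles the top, condition (3) is needed only for binary meets, and the step is not delicate.
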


Condition (3) is usually referred to as the {\em Frobenius identity} (or the {\em $\to$-semilinearity} of $f$ \cite{EPP}).

\subsection{\bf Ideals and nuclear ranges.}\label{sec nuc e id}
\setcounter{theosub}{0}
\setcounter{subsection}{4} Let $H$ be a Heyting semilattice.
	A sub-semilattice $T\subseteq H$ is an {\em ideal}  of $H$ (also known as a {\em total sub-algebra} \cite{Ko1}) if $x\to t \in T$ for every $x\in H$ and  $t\in T$.
	The system of all ideals of $H$ will be denoted by $$\mathcal{I}dl(H).$$
This is a closure system, hence a complete lattice. Moreover:

\begin{propsub}[\cite{Ko1}]
     $\mathcal{I}dl(H)$ is a distributive lattice with
    $T_1\wedge T_2=T_1\vartriangle T_2.$
\end{propsub}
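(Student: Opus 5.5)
The plan is to verify first that $T_1\vartriangle T_2$ is an ideal. It is then automatically the meet, and distributivity can be checked directly using the Heyting identities (H1)–(H6).

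\textbf{Step 1: $T_1\vartriangle T_2$ is an ideal and is the meet.} Let $t_i\in T_i$. Since $1\in T_i$, we have $t_1=t_1\wedge 1\in T_1\vartriangle T_2$, so $T_1\cup T_2\subseteq T_1\vartriangle T_2$. The set $T_1\vartriangle T_2$ is closed under $\wedge$, since $(t_1\wedge t_2)\wedge(t_1'\wedge t_2')=(t_1\wedge t_1')\wedge(t_2\wedge t_2')$. For $x\in H$, (H1) gives $x\to(t_1\wedge t_2)=(x\to t_1)\wedge(x\to t_2)\in T_1\vartriangle T_2$. So $T_1\vartriangle T_2$ is an ideal containing $T_1$ and $T_2$. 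Any ideal containing $T_1\cup T_2$ is a sub-semilattice, so it contains every $t_1\wedge t_2$. Hence $T_1\vartriangle T_2$ is the least ideal above both. This is the join in $\mathcal{I}dl(H)$; the ``$\wedge$'' in the statement is presumably the join written in the reversed (coframe-style) order. The meet in the inclusion order is just $T_1\cap T_2$, because $\mathcal{I}dl(H)$ is a closure system.

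\textbf{Step 2: distributivity.} It suffices to prove one distributive law,
\[
T\cap(T_1\vartriangle T_2)\subseteq (T\cap T_1)\vartriangle(T\cap T_2),
\]
since the reverse inclusion always holds. Take $t=t_1\wedge t_2\in T$ with $t_i\in T_i$. The idea is to replace $t_i$ by $s_i=t_i\to t$. Then $s_i\in T$ because $t\in T$ and $T$ is an ideal, and $s_i\in T_i$ needs a different argument, which is the main obstacle.

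Instead I would take $s_1=(t_2\to t)\to t$. Using (H6), $t=((t_2\to t)\to t)\wedge(t_2\to t)$. Both factors lie in $T$, since $t\in T$ and $T$ is an ideal. For membership in the $T_i$:
\begin{itemize}
\item By (H4), $t_2\to t=t_2\to(t_1\wedge t_2)=(t_2\to t_1)\wedge(t_2\to t_2)=t_2\to t_1$ by (H1) and (H2). This lies in $T_1$, since $T_1$ is an ideal.
\item Similarly, $(t_2\to t)\to t=(t_2\to t_1)\to(t_1\wedge t_2)=((t_2\to t_1)\to t_1)\wedge((t_2\to t_1)\to t_2)$. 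The second factor equals $t_2$, because $(t_2\to t_1)\to t_2\ge t_2$ by (H3) and $\le t_2$ by (H5)/(H6). So this element is $((t_2\to t_1)\to t_1)\wedge t_2$, which lies in $T_2$ once we note $(t_2\to t_1)\to t_1\ge t_2$ by (H5). I expect this identification to be the delicate computation.
\end{itemize}

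If the bookkeeping fails, the fallback is a cleaner decomposition: $t=(t_1\to t)\wedge(t_2\to t)$-type splittings via (H1), each factor checked to lie in $T\cap T_i$. Once one distributive law holds, the lattice is distributive.
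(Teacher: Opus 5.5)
\textbf{Where the proposal breaks.} Your Step 1 is correct. You are also right that $\vartriangle$ is the join for inclusion, so the ``$\wedge$'' in the statement should be read as the join; this is how the paper itself uses it later (elements of $\cf a\vee\of b$ are written as $a'\wedge(\ldots)$). Your plan for Step 2 also works: prove $T\cap(T_1\vartriangle T_2)\subseteq(T\cap T_1)\vartriangle(T\cap T_2)$ via the (H6) splitting $t=((t_2\to t)\to t)\wedge(t_2\to t)$. The membership $t_2\to t=t_2\to t_1\in T\cap T_1$ is fine.

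The gap is your justification that $(t_2\to t)\to t\in T_2$. You claim $(t_2\to t_1)\to t_2=t_2$, i.e.\ $(a\to b)\to a\le a$. That is Peirce's law, and it fails in Heyting semilattices; (H5) and (H6) give nothing of the kind. In the chain $0<m<1$ with $t_2=m$, $t_1=0$, one gets $(m\to 0)\to m=0\to m=1\neq m$. So your identification $(t_2\to t)\to t=t_2$ is false: here the left side is $1$. It cannot hold in general anyway, because that element lies in $T$, so it would force $t_2\in T$. Take $T=\{0,1\}$ in this chain: $t=0\in T$ but $m\notin T$. Your fallback is also wrong: $(t_1\to t)\wedge(t_2\to t)=(t_1\to t_2)\wedge(t_2\to t_1)$, which equals $1$ when $t_1=t_2\neq 1$.

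\textbf{The fix.} It is the \emph{other} factor that is redundant, and the claim you need is true:
\begin{itemize}
\item Since $t\le t_2$, monotonicity gives $(t_2\to t)\to t\le(t_2\to t)\to t_2$.
\item Conversely, $((t_2\to t)\to t_2)\wedge(t_2\to t)\le t_2\wedge(t_2\to t)\le t$.
\item Hence $(t_2\to t)\to t=(t_2\to t)\to t_2$. This lies in $T_2$ simply because $t_2\in T_2$ and $T_2$ is an ideal.
\end{itemize}
Equivalently, in your own factorization, $(t_2\to t_1)\to t_2\le(t_2\to t_1)\to t_1$, so the meet equals the factor $(t_2\to t_1)\to t_2\in T_2$. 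With this, $t=s_1\wedge s_2$ where $s_1=(t_2\to t)\to t\in T\cap T_2$ and $s_2=t_2\to t\in T\cap T_1$, and your proof is complete.

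\textbf{Comparison.} The paper gives no proof here; it cites K\"ohler. The standard argument for sublocales proves the dual law $(A\vartriangle B_1)\cap(A\vartriangle B_2)\subseteq A\vartriangle(B_1\cap B_2)$, which sidesteps this issue entirely. If $x=a_1\wedge b_1=a_2\wedge b_2$, put $a=a_1\wedge a_2\in A$. Then $x=a\wedge(a\to x)$, and $a\to x=a\to b_i\in B_i$ for $i=1,2$ by (H1) and (H2).
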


\begin{propsub}[\cite{PPT}]
    For each $a\in H$, $\cf a$ and $\of a$ belong to $\mathcal{I}dl(H)$ and are complemented to each other.
\end{propsub}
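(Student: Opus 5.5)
We must show three things: $\cf a$ and $\of a$ are ideals of $H$; their meet in $\mathcal{I}dl(H)$ is $\OS$; and their join in $\mathcal{I}dl(H)$ is $H$. By the preceding proposition the meet of ideals is $T_1\vartriangle T_2$, and the join is the least ideal containing both.

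\emph{Both sets are ideals.} For $\cf a=\ur a$: it is closed under binary meets and contains $1$. If $x\in H$ and $t\ge a$, then $t\le x\to t$ by (H3), so $x\to t\ge a$. For $\of a=\setof{a\to x}{x\in H}$: it contains $1=a\to 1$. By (H1), $(a\to x)\wedge(a\to y)=a\to(x\wedge y)$, so it is closed under meets. By (H4), $y\to(a\to x)=a\to(y\to x)$, so it is closed under $y\to(-)$.

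\emph{Meet.} Take $u\ge a$ and $a\to x\in\of a$; we show $u\wedge(a\to x)$ lies in $\cf a\vartriangle\of a$ and must equal $1$... but this is false as stated, since $a\wedge(a\to x)=a\wedge x$ need not be $1$. So we must compare with the bottom element of $\mathcal{I}dl(H)$, which is the least ideal, namely $\OS=\{1\}$ (note $1\to 1=1$ and $x\to 1=1$). I expect the intended meet to be the intersection: the meet of ideals as elements of a closure system is the intersection $\cf a\cap\of a$. I would therefore check the statement of the preceding proposition carefully, since $\vartriangle$ should describe the \emph{join} (indeed the join of $T_1,T_2$ is $T_1\vartriangle T_2$ in Köhler's result). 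With meet $=$ intersection: if $t\ge a$ and $t=a\to t$, then $1=a\to a\le a\to t=t$ by (H2) and monotonicity, so $t=1$. Hence $\cf a\cap\of a=\OS$.

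\emph{Join.} Any $x\in H$ can be written by (H6) as $x=((a\to x)\to x)\wedge(a\to x)$. Here $a\to x\in\of a$. By (H5), $a\le(a\to x)\to x$, so $(a\to x)\to x\in\cf a$. Thus $H\subseteq\cf a\vartriangle\of a$, which lies inside the join, and so the join is $H$.

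The main obstacle is bookkeeping: identifying which lattice operation $\vartriangle$ represents, and realizing that the join must be shown to be all of $H$. The decomposition (H6) is precisely the tool for this.
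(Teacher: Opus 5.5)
Your proof is correct. The ideal checks via (H3), (H1), (H4) are right, and $\cf a\cap\of a=\OS$ holds because $t\ge a$ and $t=a\to t$ force $t=1$. The decomposition $x=((a\to x)\to x)\wedge(a\to x)$ from (H5)/(H6) gives $H=\cf a\vartriangle\of a$, and this set lies inside any ideal containing $\cf a\cup\of a$.

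The paper gives no proof of its own; it simply cites \cite{PPT}. Your join argument uses the same (H5)/(H6) decomposition the paper later uses in the proof of Proposition~\ref{prop nuc em bd}.

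Your diagnosis of the bookkeeping issue is also right. In $\mathcal{I}dl(H)$, ordered by inclusion, meets are intersections and $T_1\vartriangle T_2$ is the \emph{join}. This is how the paper actually uses it: it writes $\of a\vartriangle\of b=\of(a\vee b)$, and it writes elements of $\cf a\vee\of b$ as $a'\wedge(\ldots)$ in the proof that $\cf a\in\mathcal{B}d(H)$. So the displayed $T_1\wedge T_2=T_1\vartriangle T_2$ is a typo for $\vee$.

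For a clean write-up, make two changes. First, delete the aborted sentence about $u\wedge(a\to x)$. Second, state explicitly that every ideal contains $1$: sub-semilattices contain the top, or alternatively $t\to t=1$ for any $t$ in a nonempty ideal. That fact is what makes $\OS=\{1\}$ the bottom of $\mathcal{I}dl(H)$. If the empty set were admitted as an ideal, $\cf a\cap\of a=\{1\}$ would not be the bottom.
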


 A subset $N\subseteq H$ is a {\em nuclear range} (also known as a \emph{strong ideal} \cite{PPT}) if it is the image of some nucleus on $H$ (recall that a \emph{nucleus} on a meet-semilattice is a closure operator that preserves binary meets).
The poset of all nuclear ranges on $H$ ordered by inclusion will be denoted by $$\mathcal{N}uc(H).$$

\begin{propsub}[\cite{ME20}]\label{abertos sao nuc}
	For each $a\in H$, $\of a \in \mathcal{N}uc(H)$. Moreover,  $\cf a \in \mathcal{N}uc(H)$ for every $a\in H$ if and only if $H$ is a Heyting lattice.
\end{propsub}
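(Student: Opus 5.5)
First I would show that each $\of a$ is a nuclear range. The natural candidate nucleus is $\gamma_a(x)=a\to x$. It preserves binary meets by (H1). It is inflationary by (H3). It is idempotent by (H4), since $a\to(a\to x)=(a\wedge a)\to x=a\to x$. It is monotone because it is a right adjoint, or directly by \eqref{Heyting}. An inflationary, monotone, idempotent map satisfies \eqref{propriedade do fecho}, so $\gamma_a$ is a nucleus. Its image is $\setof{a\to x}{x\in H}=\of a$, by the second description of $\of a$ recorded in Section~\ref{Heytsemil}.

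Next I would treat the closed sets. Suppose $H$ is a Heyting lattice. The natural candidate nucleus is $\delta_a(x)=a\vee x$, which exists because $H$ has binary joins. It is clearly inflationary, monotone and idempotent, and its image is exactly $\ur a=\cf a$. Preservation of binary meets, $a\vee(x\wedge y)=(a\vee x)\wedge(a\vee y)$, follows from distributivity, and a Heyting lattice is distributive because each $a\wedge(-)$ is a left adjoint and so preserves existing joins by (G2). Hence $\cf a\in\mathcal{N}uc(H)$ for every $a$.

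For the converse, assume $\cf a\in\mathcal{N}uc(H)$ for every $a$, say $\cf a=\nu_a[H]$ for a nucleus $\nu_a$. I would show that binary joins exist. Given $a,b$, put $c=\nu_a(b)$. Then $c\in\ur a$, so $c\ge a$, and $c\ge b$ since $\nu_a$ is inflationary. If $d\ge a$ and $d\ge b$, then $d\in\cf a$, so $d=\nu_a(d)$. Applying \eqref{propriedade do fecho} to $b\le d=\nu_a(d)$ gives $c=\nu_a(b)\le d$. Thus $c=a\vee b$.

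It remains to see that $H$ has a bottom element, in case the paper's notion of Heyting lattice includes $0$ (as the displayed identities $\of(0)=\OS$ suggest). This is the step I expect to be the real obstacle, since nuclearity of all $\cf a$ only seems to yield binary joins. My first attempt would be to check whether the intended definition only requires a lattice, in which case the argument is finished. If not, I would try to extract $0$ from the meet-preservation of some $\nu_a$, for instance by comparing $\nu_a(x\wedge(a\to x))$ with $\nu_a(x)\wedge\nu_a(a\to x)$, though I suspect "lattice" here means only binary joins. With joins available, \eqref{Heyting} makes $H$ a Heyting lattice, and the equivalence follows.
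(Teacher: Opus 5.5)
Your argument is correct. The paper gives no proof of this statement (it only cites it), but what you wrote is the standard argument:
\begin{itemize}
\item $a\to(-)$ is a nucleus with range $\of a$;
\item once binary joins exist, $a\vee(-)$ is a nucleus with range $\cf a$, by distributivity;
\item conversely, any closure operator with range $\cf a$ must send $b$ to $a\vee b$.
\end{itemize}
Your argument for the last point uses only that $\cf a$ is the range of a closure operator, not that $\nu_a$ preserves meets, which is worth noticing.

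The one thing to fix is your hedge about a bottom element. Your suspicion is right, and your fallback of extracting $0$ from meet-preservation cannot succeed, because the hypothesis does not force a bottom. Take the chain $\{-1,-2,-3,\dots\}$ of negative integers with top $-1$, and set $a\to b=-1$ if $a\le b$ and $a\to b=b$ otherwise. This is a Heyting semilattice with all binary joins, and each $\cf a$ is the range of the nucleus $x\mapsto\max(a,x)$. Yet it has no least element.

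So the equivalence holds only if ``Heyting lattice'' means what the paper's definition literally says: a lattice carrying $\to$, with binary joins and no bottom required. The $0$ in the identities $\of(0)=\OS$ and $\cf(0)=H$ in the preliminaries is an extra tacit assumption made there, not part of this statement. With that reading your proof is complete as it stands, and you should drop the last paragraph's fallback.
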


Nuclear ranges behave well under images of continuous maps:

\begin{propsub}[\cite{EPP}]\label{cont preservam nuc}
The image of a nuclear range under a continuous map is a nuclear range.
\end{propsub}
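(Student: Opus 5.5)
To prove Proposition~\ref{cont preservam nuc}, let $f\colon H_1\to H_2$ be continuous with left adjoint $f^*$, and let $N=\nu[H_1]$ for a nucleus $\nu$ on $H_1$. I will exhibit an explicit nucleus on $H_2$ whose set of fixed points is $f[N]$. The natural candidate is the composite
\[
\mu=f\,\nu\, f^*\colon H_2\to H_2 .
\]
The plan is to check three things: that $\mu$ is a closure operator, that it preserves binary meets, and that its image is exactly $f[N]$.

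For the closure properties, monotonicity is clear because all three maps are monotone. Inflationarity comes from (G1) and $\nu\ge\mathrm{id}$: we have $b\le f f^*(b)\le f\nu f^*(b)$.

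For idempotency it suffices to show that every element $f(n)$ with $n\in N$ is fixed by $\mu$. We have $f^*f(n)\le n$ by (G1), so $\nu f^* f(n)\le \nu(n)=n$, and hence $\mu(f(n))\le f(n)$. Combined with inflationarity this gives $\mu(f(n))=f(n)$. Since every element of the form $\mu(b)$ is $f$ applied to an element of $N$, it is fixed, so $\mu\mu=\mu$. The same computation shows that the image of $\mu$ is exactly $f[N]$: one inclusion is immediate, and the other follows because each $f(n)$ is a fixed point.

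The step needing care is meet preservation. Here $f^*$ preserves binary meets because $f$ is continuous. The nucleus $\nu$ preserves binary meets by definition. Finally, $f$ preserves all existing meets by (G2), being a right adjoint. Hence
\[
\mu(a\wedge b)=f\nu(f^*a\wedge f^*b)=f(\nu f^*a\wedge \nu f^*b)=\mu a\wedge\mu b .
\]
So $\mu$ is a nucleus and $f[N]=\mu[H_2]\in\mathcal{N}uc(H_2)$.

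I expect no real obstacle. The only point to be careful about is that meet preservation of $f^*$ is exactly the content of the definition of continuity, so the Frobenius identity of Proposition~\ref{prop carac cont} is not needed here. If the intended definition of nucleus were stated via the fixed-point set instead, I would fall back on the characterization of nuclear ranges as subsets closed under arbitrary existing meets and under $x\to(-)$. Under that characterization, Frobenius gives $b\to f(n)=f(f^*b\to n)\in f[N]$.
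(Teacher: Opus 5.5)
Your argument is correct and complete. The paper gives no proof of this proposition; it is simply quoted from \cite{EPP}, so your proof has to stand on its own, and it does. The map $\mu=f\nu f^*$ has all the required properties:
\begin{itemize}
\item it is monotone;
\item it is inflationary, since $\mathrm{id}_{H_2}\le ff^*$ and $\nu\ge\mathrm{id}_{H_1}$;
\item it fixes every $f(n)$ with $n\in N$, since $f^*f\le\mathrm{id}_{H_1}$ and $\nu(n)=n$, which yields both idempotency and $\mu[H_2]=f[N]$;
\item it preserves binary meets, because $f^*$, $\nu$ and $f$ each do.
\end{itemize}

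The other natural route would imitate the localic proof that images of sublocales are sublocales. One shows that $f\nu f^*(b)$ is the least element of $f[N]$ above $b$. One then shows that $b\to f(n)=f(f^*(b)\to n)\in f[N]$ by the Frobenius identity of Proposition~\ref{prop carac cont}. Your direct construction of the nucleus needs only that $f^*$ preserves binary meets and makes no appeal to Frobenius.

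One caution about your closing fallback remark. In the non-complete setting, nuclear ranges are \emph{not} characterized as the subsets closed under existing meets and under $x\to(-)$. The correct characterization keeps closure under $x\to(-)$ but replaces meet-closure by the requirement that, for each $a$, there is a least element of $N$ above $a$. For example, in the Heyting chain $\mathbb{Q}\cap[0,1]$, the set of rationals exceeding $\sqrt{2}/2$ is closed under all existing meets and under $x\to(-)$. It has no least element, however, so it is not the range of any nucleus. This does not affect your main proof, which works directly from the definition of nuclear range that the paper uses.
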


\subsection{\bf Frames and locales.}
\setcounter{subsection}{5}
\setcounter{theosub}{0}
A \emph{frame}  (also known as a {\em locale}) is a complete lattice $L$ satisfying the infinite distributivity law
$$a\wedge \tbigvee B=\tbigvee \{a\wedge b\mid b\in B\}$$
for all $a\in L$ and $B\subseteq L$. Frames are precisely complete Heyting lattices, with the Heyting operation given by $a\to b=\tbigvee\{x\mid x\wedge a\le b\}$.
Standard examples of frames are the lattices $\mathcal O X$ of open sets of topological spaces.
A {\em coframe} $L$ is the dual of a frame, that is, a complete lattice satisfying
$a\vee \tbigwedge B=\tbigwedge \{a\vee b\mid b\in B\} $
for all $a\in L$ and $B\subseteq L$.

A \emph{frame homomorphism} $f\colon L\to M$ is a map between frames that preserves arbitrary joins (including the bottom element 0) and finite meets (including the top element 1). Similarly, a {\em coframe homomorphism} preserves arbitrary meets and finite joins.
\emph{Localic maps} are the right adjoints of frame homomorphisms. They are precisely the continuous maps between complete Heyting algebras and are thus characterized by conditions (1)-(3) in Proposition \ref{prop carac cont}.

A {\em sublocale} of a locale $L$ is a subset $S\sue L$ such that
	\begin{enumerate}[(S1)]
		\item
		$S$ is a {\em meet-subset}, that is, for every $M \subseteq S$, $\tbigwedge M\in S$.
		\item
		For every $s\in S$ and every $x\in L$, $x \to s\in S$.
	\end{enumerate}
The open and closed sets $\of a$ and $\cf a$ are examples of sublocales of $L$. The least sublocale of $L$ is the set $\OS=\{1\}$.

The system $\SL(L)$ of all sublocales of $L$ is a coframe  with a fairly transparent structure \cite[pp. 28-29]{PP}:
\begin{equation}
\tag{2.5.1}\label{joinsub}
	\tbigwedge_{i\in J} S_i
	= \tbigcap_{i\in J} S_i
	\qtq{and} \tbigvee_{i\in J} S_{i}
	= \setof{\tbigwedge M}{M\subseteq \tbigcup_{i\in J} S_i}.
\end{equation}

Let $f\colon L\to M$ be a localic map (with left adjoint $f^*$) and $S\sue L$ and $T\sue M$ sublocales.
Then the standard set theoretic image $f[S]$ is easily seen to be a sublocale of $M$. The standard preimage $f^{-1}[T]$ is generally not a sublocale, but it is closed under meets and hence, by the formula for suprema in \eqref{joinsub} there exists a largest sublocale contained in $f^{-1}[T]$ which we denote by $f_{-1}[T]$. This is the localic preimage
 of $T$. There is the obvious Galois adjunction
\[f[S]\sue T\ \mbox{ iff }\ S\sue f_{-1}[T].\]
Consequently,  $f_{-1}[-]$ preserves intersections. Moreover,
\[f_{-1}[\OS]=\OS,\ \ f_{-1}[\cf(a)]=\cf(f^*(a))\ \mbox{ and }\ f_{-1}[\of(a)]=\of(f^*(a)).\]

For more about frames, locales and point-free topology consult \cite{PP}.

\subsection{\bf Constructing coframes inside complete lattices.}
\setcounter{subsection}{6}
\setcounter{theosub}{0}
Given a complete lattice $L$ and  $M\subseteq L$, set
\begin{align*}
	\Lambda M &=\{\tbigwedge K\mid K\sue M\},  &&\rotatebox[origin=c]{180}{$\Lambda$}{ M=\{\tbigvee K\mid K\sue M\}},\\
	\Lambda^\mathfrak{f} M&=\{\tbigwedge K\mid K\sue M \text{, $K$ is finite}\},  &&\rotatebox[origin=c]{180}{$\Lambda$}^\mathfrak{f} M=\{\tbigvee K\mid K\sue M\text{, $K$ is finite}\}.
\end{align*}

%Whenever we need to deal with more than one complete lattice, we will put the name of the lattice in subscript: $\Lambda_L(-)$, $\rotatebox[origin=c]{180}{$\Lambda$}_L(-)$,  $\Lambda^\mathfrak{f}_L(-) $ and $\rotatebox[origin=c]{180}{$\Lambda$}_L^\mathfrak{f}(-)$.

The following important result of M. Erné \cite{ME} will be needed in the sequel.  For the sake of completeness, we include a proof, as \cite{ME} is not easily available.
%convenience of the reader.

\begin{propsub}\label{prop tecnica}
	Let $L$ be a distributive complete lattice, and let $M$ be a set of complemented elements of $L$. Then $T=\Lambda \rotatebox[origin=c]{180}{$\Lambda$}^\mathfrak{f} M$ is a coframe whose binary joins are given by the formula
	\begin{equation}\tag{2.6.1}\label{152ast}
		(\tbigwedge_i a_i) \stackrel{T}{\vee} (\tbigwedge_j b_j)=\tbigwedge_{i,j}(a_i  \vee b_j) \quad \mbox{   for every $a_i, b_j \in \rotatebox[origin=c]{180}{$\Lambda$}^\mathfrak{f} M$.}
	\end{equation}
\end{propsub}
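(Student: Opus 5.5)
We would prove Proposition \ref{prop tecnica} in four steps: establish the basic lattice facts about $T$, derive the formula (\ref{152ast}), use the formula to get finite distributivity, and then upgrade to infinite distributivity.

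\textbf{Basic facts about $T$.} Write $N=\rotatebox[origin=c]{180}{$\Lambda$}^\mathfrak{f} M$. Every element of $N$ is a finite join of complemented elements of $L$. Since complements in a distributive lattice satisfy De Morgan's laws, every element of $N$ is itself complemented, and $N$ is closed under finite joins. The set $T=\Lambda N$ is closed under arbitrary meets computed in $L$. Hence $T$ is a complete lattice: meets are those of $L$, and joins are $\stackrel{T}{\bigvee}S=\tbigwedge\{t\in T\mid t\ge s \ \forall s\in S\}$.

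\textbf{The binary join formula.} Take $x=\tbigwedge_i a_i$ and $y=\tbigwedge_j b_j$ with $a_i,b_j\in N$, and put $z=\tbigwedge_{i,j}(a_i\vee b_j)$. Each $a_i\vee b_j$ lies in $N$, so $z\in T$, and clearly $z\ge x,y$. For minimality, let $t\in T$ with $t\ge x,y$. Writing $t=\tbigwedge_k c_k$ with $c_k\in N$, it suffices to show $c\ge z$ for every $c\in N$ with $c\ge x$ and $c\ge y$.

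This is the step where complements are used. Let $c'$ be the complement of $c$. From $c\ge \tbigwedge_i a_i$ we cannot immediately pass to a finite subfamily, since $L$ is not assumed compact. We avoid compactness altogether: it is enough to show $z\le c$. Now
\[
z\wedge c'\le \tbigwedge_{i,j}(a_i\vee b_j)\wedge c' .
\]
Each $a_i$ is complemented, so $a_i\vee b_j\le a_i\vee b_j$ trivially; we instead want $z\le x\vee y$ computed in $L$, because $x\vee y\le c$ holds in $L$. I expect this inequality $\tbigwedge_{i,j}(a_i\vee b_j)\le (\tbigwedge_i a_i)\vee(\tbigwedge_j b_j)$ to be the main obstacle, since it is an infinite distributive law that fails in general distributive complete lattices.

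\textbf{Handling the obstacle.} We avoid needing that inequality by working with $c'$ directly. From $c\ge x$ we get $x\wedge c'=0$; that is, $c'\wedge\tbigwedge_i a_i=0$. A meet is not preserved by $c'\wedge-$ in general, so again we should not try to distribute $c'$ over the infinite meet.

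Instead we argue via upper sets. The element $c$ belongs to $N$, which is a sub-join-semilattice of complemented elements. The key trick is that for complemented $a$ and any $u$, $a\vee u\ge c$ iff $u\ge c\wedge a'$. So $z\le c$ would follow from the statement that the set $U_c=\{u\mid u\le c\}$ is reached by meets.

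Honestly, I expect the intended argument to be as follows. Since $c\ge x$, we get $c\vee \tbigwedge_j b_j\ge x\vee y$ trivially, and
\[
z=\tbigwedge_{i,j}(a_i\vee b_j)\le \tbigwedge_i\big(a_i\vee \tbigwedge_j(b_j\vee a_i)\big).
\]
Because $a_i$ is complemented, the map $a_i\vee(-)$ has a left adjoint $u\mapsto u\wedge a_i'$, so it preserves arbitrary meets. Hence $\tbigwedge_j(a_i\vee b_j)=a_i\vee y$. Then $z=\tbigwedge_i(a_i\vee y)$. By the same token, $y\vee(-)$ should preserve meets, but $y$ need not be complemented. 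So instead we test against $c$: $a_i\vee y\le a_i\vee c$, giving $z\le \tbigwedge_i(a_i\vee c)=x\vee c=c$, using this time that $c$ is complemented, so $c\vee(-)$ preserves meets. This proves (\ref{152ast}).

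\textbf{Finite distributivity.} The formula makes $T$ distributive for finite joins over arbitrary meets. The binary join is computed termwise, and meets in $T$ are meets in $L$. So $t\stackrel{T}{\vee}\tbigwedge_k s_k$ and $\tbigwedge_k(t\stackrel{T}{\vee}s_k)$ both unfold, via (\ref{152ast}), to the meet of all $a_i\vee b$ with $b$ ranging over the generators of all the $s_k$. This gives the coframe law, and it also gives distributivity of $T$.
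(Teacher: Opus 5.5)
Your proof is correct. For the join formula you follow essentially the paper's argument. Both proofs test $z=\tbigwedge_{i,j}(a_i\vee b_j)$ against a single meet-generator $c\ge x,y$. Both use only the fact that $e\vee(-)$ preserves arbitrary meets when $e$ is complemented; the paper quotes this from the literature, while your adjunction $(-)\wedge e'\dashv e\vee(-)$ proves it on the spot. The bookkeeping differs only slightly:
\begin{itemize}
\item the paper uses complementedness of $c$ and of the $b_j$ to write $c=\tbigwedge_{i,j}(b_j\vee a_i\vee c)$;
\item you use complementedness of the $a_i$ to get $z=\tbigwedge_i(a_i\vee y)$, and then of $c$ to get $\tbigwedge_i(a_i\vee c)=x\vee c=c$.
\end{itemize}

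The genuine divergence is in the coframe law. You write each $s_k=\tbigwedge_l b_{k,l}$ and observe that $\tbigwedge_{k,l}b_{k,l}$ is a legitimate representation of $\tbigwedge_k s_k$. Since (2.6.1) was proved for arbitrary representations, both sides become literally the same meet $\tbigwedge_{i,k,l}(a_i\vee b_{k,l})$. The paper instead represents $\tbigwedge_k c_k$ as $\tbigwedge Y$, where $Y$ is the set of all generators lying above some $c_k$. It then obtains the nontrivial inequality by a chain of comparisons, which tacitly uses that $a_i\vee c_k$ (join in $L$) already lies in $T$. Your version is shorter, gives equality directly, and needs no such extra observation.

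On presentation, three points:
\begin{itemize}
\item The material between ``This is the step where complements are used'' and ``Honestly, I expect the intended argument\dots'' is abandoned attempts (the $z\wedge c'$ line, the $U_c$ remark) and should be cut.
\item The displayed inequality $z\le\tbigwedge_i\bigl(a_i\vee\tbigwedge_j(b_j\vee a_i)\bigr)$ is in fact an equality. You can bypass it by going straight to $z=\tbigwedge_i(a_i\vee y)$.
\item Your remark that $z\le x\vee y$ (join in $L$) can fail is right, and it is exactly why the join of $T$ differs from that of $L$. For instance, take the frame of open subsets of $X=\{0\}\cup\{\pm 1/n\mid n\ge 1\}\subseteq\mathbb{R}$. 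Let $a_n$ be the clopen set $X\smallsetminus\{1/m\mid m<n\}$ and $b_n$ its mirror image. Then $x\vee y=X\smallsetminus\{0\}$, while every $a_i\vee b_j=X$, so $z=X$.
\end{itemize}
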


\begin{proof}
We can assume that $M=\rotatebox[origin=c]{180}{$\Lambda$}^\mathfrak{f} M$, (otherwise, just consider $M'=\rotatebox[origin=c]{180}{$\Lambda$}^\mathfrak{f} M$, and observe that $\Lambda \rotatebox[origin=c]{180}{$\Lambda$}^\mathfrak{f} M'= \Lambda \rotatebox[origin=c]{180}{$\Lambda$}^\mathfrak{f} M= \Lambda M'$).
	
	It is obvious that $T$ is complete. To show that the joins are given by \eqref{152ast} let $a, b \in T$ and $a_i, b_j \in M$ such that $a=\tbigwedge_i a_i$ and $b=\tbigwedge_j b_j$. It is clear that $$a\stackrel{T}{\vee} b \leq \tbigwedge_{i,j}(a_i \vee b_j).$$
	For the converse inequality, consider $m\in M$ such that $m\geq a, b$. In a distributive complete lattice, every complemented element $m$ satisfies the distributive law $m\vee \tbigwedge_i x_i=\tbigwedge_i(m\vee x_i)$ for any system $\{x_i\}_i$ of elements of $L$
(see e.g. \cite[VI.4.4.1]{PP}). Therefore
	$$m=m\vee a= m \vee \tbigwedge_i a_i= \tbigwedge_i (m \vee a_i)$$
	and
	$$m= \tbigwedge_j (b_j \vee m)= \tbigwedge_j (b_j \vee \tbigwedge_i (m \vee a_i))= \tbigwedge_{i,j} (b_j \vee a_i \vee m )\geq \tbigwedge_{i,j}(a_i \vee b_j).$$
	Since this holds for every $m\in M$ and $M$ is a meet-base of $T$, it follows that $\tbigwedge_{i,j}(a_i \vee b_j)\leq a \stackrel{T}{\vee} b$.
	
	Finally, to see that $T$ is a coframe, take $c_k \in T$ for $ k \in K$ and let $$Y=\{m \in M \mid c_k \leq m\text{ for some } k\in K\}.$$ Clearly $\tbigwedge_k c_k = \tbigwedge Y$ and, moreover,
	\begin{align*}a \stackrel{T}{\vee} \tbigwedge_{k\in K} c_k &= (\tbigwedge_{i\in J} a_i) \stackrel{T}{\vee}  \tbigwedge_{m\in Y} m = \tbigwedge_{m\in Y,i\in J} (m \vee a_i ) \\ &\geq \tbigwedge_{i\in J,k\in K}(a_i \vee c_k)\geq \tbigwedge_{k\in K} (a\vee c_k).\qedhere
\end{align*}
\end{proof}

\section{The category of basic zero-dimensional spaces}\label{sec b0}

We will now introduce our basic setting, the category of basic zero-dimensional spaces and basic continuous maps. These notions were suggested in \cite[\S 7]{EPP}. However, a few adjustments need to be made to ensure things work well.

\begin{defin}\label{definiçao bods}
	A \emph{basic zero-dimensional} space (briefly, {\em b0-space}) is  a triple $\mathcal{S}=(X,\mathcal{C}, \mathcal{B}d)$, where
	\begin{itemize}
		\item[--]  $X$ is a set,
		
		\item[--] $\mathcal{B}d$ is a closure system on $X$, distributive as a lattice, and
		
		\item[--]  $\mathcal{C}\subseteq \mathcal{B}d$ is such that every $C\in \mc$  has a complement $\neg C$ in $\mathcal{B}d$, and every $M\in\mathcal{B}d$ is of the form
    \[M=\tbigcap_{i\in J}(C_i^1\vee \neg D_i^1\vee \dots \vee C_i^{n_i}\vee \neg D_i^{n_i}),\]
     for some $C^1_i, D^1_i, \ldots, C_i^{n_i}, D_i^{n_i}\in \mathcal{C}$.
	\end{itemize}

The bottom element of $\mathcal{B}d$ is denoted by $\OS$ and the set $\{\neg C \mid C\in \mathcal{C}\}$ by $\mathcal{O}$.
To avoid possible notation conflicts, we sometimes write
$$X=|\mathcal{S}|,\ \mathcal{C}=\mathcal{C}(\mathcal{S}),\ \mathcal{O}=\mathcal{O}(\mathcal{S}), \ \mbox{ and }\ \mathcal{B}d=\mathcal{B}d(\mathcal{S}).$$
If we set  $\mathcal{B}=\{B \vee \neg C \mid B,\, C \in \mathcal{C}\}$, we have
$\mathcal{B}d=\Omega\rotatebox[origin=c]{180}{$\Lambda$}^\mathfrak{f}_{\mathcal{B}d}(\mathcal{B}).$

The elements of $\mathcal{B}d$ will be called \emph{basic domains} and $\mathcal{B}$ will be called a {\em basis} of $\mathcal{B}d$.
Although $\mathcal{C}$ does not need to be a closure system, we will refer to its elements as \emph{closed \textup(domains\textup)} and to their complements in $\mathcal{B}d$ as \emph{open \textup(domains\textup)}.
\end{defin}

As an immediate application of Proposition \ref{prop tecnica}, we have:
\begin{prop}
	If $\mathcal{S}=(X,\mathcal{C}, \mathcal{B}d)$ is a b0-space, then $\mathcal{B}d$ is a coframe.
\end{prop}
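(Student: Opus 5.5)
We apply Proposition \ref{prop tecnica} with $L=\mathcal{B}d$ and $M=\mathcal{C}\cup\mathcal{O}$. The task is to check its hypotheses and then show that the coframe $T=\Lambda\rotatebox[origin=c]{180}{$\Lambda$}^\mathfrak{f} M$ it produces is all of $\mathcal{B}d$.

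First, $\mathcal{B}d$ is a closure system on $X$, so it is a complete lattice. Meets are intersections, and the join of a family is the closure $\Gamma_{\mathcal{B}d}$ of its union (Proposition \ref{sistemas e operadores de fecho}). By Definition \ref{definiçao bods}, $\mathcal{B}d$ is distributive as a lattice, and only finite distributivity is required in Proposition \ref{prop tecnica}. Every $C\in\mathcal{C}$ has a complement $\neg C$ in $\mathcal{B}d$. Complementation is symmetric, so every $\neg C\in\mathcal{O}$ is complemented as well, with complement $C$. Hence $M=\mathcal{C}\cup\mathcal{O}$ is a set of complemented elements of the distributive complete lattice $\mathcal{B}d$, and Proposition \ref{prop tecnica} tells us that $T$ is a coframe, with binary joins given by \eqref{152ast}.

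It remains to show $T=\mathcal{B}d$, and this is the only point needing care. The joins and meets in $\Lambda$ and $\rotatebox[origin=c]{180}{$\Lambda$}^\mathfrak{f}$ are taken in $L=\mathcal{B}d$. Joins in $\mathcal{B}d$ are exactly the joins $\vee$ appearing in the defining representation of basic domains, and meets in $\mathcal{B}d$ are intersections. So $\rotatebox[origin=c]{180}{$\Lambda$}^\mathfrak{f} M$ consists of the finite joins $C^1\vee\neg D^1\vee\dots\vee C^n\vee\neg D^n$. Such a join may contain only closed or only open terms, but that is harmless: $\OS$ is the empty join and is the bottom of $\mathcal{B}d$, and finite joins of basic elements stay inside $\mathcal{B}d$ since $\mathcal{B}d$ is a lattice. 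Taking arbitrary intersections of these, the representation condition in Definition \ref{definiçao bods} gives $\mathcal{B}d\subseteq T$. Conversely, $T\subseteq\mathcal{B}d$ because $M\subseteq\mathcal{B}d$ and $\mathcal{B}d$ is closed under its own finite joins and under arbitrary intersections; the empty intersection is $X\in\mathcal{B}d$.

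Therefore $\mathcal{B}d=T$ is a coframe. As a by-product, binary joins of basic domains are computed by \eqref{152ast}.
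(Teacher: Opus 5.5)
Your proof is correct and follows the paper's approach: the paper simply records this as an immediate application of Proposition \ref{prop tecnica}. You have supplied the routine checks the paper leaves implicit. These are that $\mathcal{B}d$ is a distributive complete lattice, that $\mathcal{C}\cup\mathcal{O}$ consists of complemented elements, and that $\Lambda\rotatebox[origin=c]{180}{$\Lambda$}^\mathfrak{f}(\mathcal{C}\cup\mathcal{O})=\mathcal{B}d$ by the representation condition in Definition \ref{definiçao bods}.
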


\bigskip
One of the motivating examples of $b0$-spaces is the triple $$(L,\cf L,\SL (L))$$ for any locale $L$,
as every sublocale $S$ is of the form
$$S=\tbigcap \{\cf a \vee \of b\mid S\sue \cf a \vee \of b\}$$
(this result, proved in \cite{PP}, expresses the well-known fact that coframes of sublocales are duals of zero-dimensional frames).

\begin{defin}
	Let $\mathcal{S}$ and $\mathcal{T}$ be b0-spaces. A {\em basic continuous map} $f\colon \mathcal{S}\to \mathcal{T}$ is a
	map $f\colon |\mathcal{S}|\to |\mathcal{T}|$ with the following properties:
	\begin{enumerate}[(BC1)]
		\item
		$f^{-1}[\OS]=\OS$.
		\item There exists a map $f_{\leftarrow}^\cf\colon \mathcal{C}(\mathcal{T})\to\mathcal{C}(\mathcal{S})$ such that
		$$f[B]\subseteq C \mbox{ iff } B \subseteq f_{\leftarrow}^\cf [C]\quad \mbox{ for every }B\in \mathcal{C}(\mathcal{S})\mbox{ and }C\in \mathcal{C}(\mathcal{T}).$$
		\item For every $C\in \mathcal{C}(\mathcal{T})$,
		$\neg f_{\leftarrow}^\cf[C]\subseteq f^{-1}[\neg C].$
		\item For every $C_i,\;D_i \in \mathcal{C}(\mathcal{T})$ ($i=1,2,\ldots,n$),
		$$\tbigvee_{i=1}^n \left (f_{\leftarrow}^\cf[C_i]\vee \neg f_{\leftarrow}^\cf[D_i] \right )\subseteq f^{-1}[\tbigvee_{i=1}^n (C_i \vee \neg D_i)].$$
	\end{enumerate}
	The map $\clf$  is called the \emph{$\cf$-basic preimage of $f$}. We will refer to the pair $(f,\clf )$ as a {\em $\cf$-basic continuous pair}, to indicate that $f$ is a basic continuous map and that $\clf$ is the associated $\cf$-basic preimage map.
%\footnote{All the (BC) conditions above are independent from each other (for details see \cite{tese}).}
\end{defin}

\subsubsection*{\bf Example.} Any localic map
$f\colon L\to M$ induces a basic continuous map $$f\colon(L, \cf L, \SL(L))\to(M, \cf M, \SL(M))$$ with $\clf$ given by $\clf[\cf a]=f_{-1}[\cf a]$.

\bigskip
It is obvious that $\clf[C]$ is the largest closed domain contained in $\pf[C]$; in particular, $\clf$ is uniquely determined by $f$.
Using this, we can show the following:

\begin{prop}
	The composition of two basic continuous maps is a basic continuous map.
\end{prop}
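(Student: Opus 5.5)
Let $f\colon\ms\to\mt$ and $g\colon\mt\to\mathcal{R}$ be basic continuous maps with $\cf$-basic preimages $\clf$ and $\clg$. I will show that $h=g\circ f$ is basic continuous with $h^\cf_\leftarrow=\clf\circ\clg$, verifying (BC1)--(BC4) in turn.

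For (BC1), $h^{-1}[\OS]=\pf[\pg[\OS]]=\pf[\OS]=\OS$.

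For (BC2), the adjunctions compose. Given $B\in\mcs$ and $C\in\mathcal{C}(\mathcal{R})$, we have $g[f[B]]\sue C$ iff $f[B]\sue\pg[C]$. I claim this is equivalent to $f[B]\sue\clg[C]$. One direction is immediate from $\clg[C]\sue\pg[C]$. The converse is the subtle step, since $f[B]$ need not be closed, so the adjunction for $g$ cannot be applied directly. I would argue instead via the description of $\clg[C]$ as the largest closed domain contained in $\pg[C]$: the set $\pf[\pg[C]]$ contains $B$, and I want to show $B\sue\pf[\clg[C]]$. The cleanest route is to use (BC2) for $f$ with the closed domain $\clg[C]$: it suffices to show $B\sue\clf[\clg[C]]$. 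So the real claim is that $\clf[\clg[C]]$ is the largest closed domain contained in $\pf[\pg[C]]$. Containment in $\pf[\pg[C]]$ is clear, since $\clf[\clg C]\sue\pf[\clg C]\sue\pf[\pg C]$. For maximality, given a closed $B\sue\pf[\pg[C]]$, we have $g[f[B]]\sue C$. I would then try to pass to the closure $\Gamma(f[B])$ in $\bdt$, or better, to the smallest closed domain containing $f[B]$ if one exists. Failing that, I would show that $\pg[C]$ relates to $\clg[C]$ through (BC3)/(BC4), namely $\pg[C]\cap$ (points reachable from closed sets) $\sue\clg[C]$.

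This is the step I expect to be the main obstacle. My first attempt is to use (BC4) with $n=1$, $C_1=\clg$-related data and $D_1$ chosen so that $\neg\clf[D]$ vanishes. I would also check whether the paper's later description of preimages resolves it; if a direct argument fails, I would decompose $f[B]$ as lying in $\pf$-images of basic sets and use distributivity of $\bdt$.

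For (BC3), we need $\neg\clf[\clg[C]]\sue\pf[\neg\clg[C]]$, which holds by (BC3) for $f$ applied to the closed domain $\clg[C]$. Then $\pf[\neg\clg[C]]\sue\pf[\pg[\neg C]]$ by (BC3) for $g$. Together these give $\neg h^\cf_\leftarrow[C]\sue h^{-1}[\neg C]$.

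For (BC4), I would apply (BC4) for $f$ to the closed domains $\clg[C_i],\clg[D_i]\in\mc(\mt)$. This gives
\[\tbigvee_i(\clf\clg[C_i]\vee\neg\clf\clg[D_i])\sue\pf\bigl[\tbigvee_i(\clg[C_i]\vee\neg\clg[D_i])\bigr].\]
Applying $\pf$, which is monotone, to (BC4) for $g$ then yields containment in $\pf[\pg[\tbigvee_i(C_i\vee\neg D_i)]]=h^{-1}[\tbigvee_i(C_i\vee\neg D_i)]$.
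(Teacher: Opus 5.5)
\textbf{There is a genuine gap in (BC2).} Your checks of (BC1), (BC3), (BC4) and of the easy half of (BC2) are correct. You also correctly reduce (BC2) to showing that $\clf[\clg[C]]$ is the largest closed domain contained in $\pf[\pg[C]]$. But you never prove this maximality. You list candidate strategies (closures in $\bdt$, (BC4) with a vanishing $\neg\clf[D]$, decomposing $f[B]$) without carrying any of them out, and this is the only nontrivial point of the proposition.

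Your intermediate route through ``$f[B]\sue\pg[C]$ iff $f[B]\sue\clg[C]$'' is also a dead end as a proof strategy. The equivalence is true, but only as a consequence of the maximality. A direct argument in $\mt$ cannot be run on $f[B]$, because it is not a basic domain. Since $\OS$ may be nonempty and joins in $\bdt$ are not unions, knowing $A\cap\neg K\sue\OS$ for an arbitrary subset $A$ does not give $A\sue K$.

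The missing idea is already in your (BC3) paragraph: the chain $\neg\clf[\clg[C]]\sue\pf[\neg\clg[C]]\sue\pf[\pg[\neg C]]$. Combine it with (BC1) for both $f$ and $g$, and argue in $\ms$ with $B$ itself rather than with $f[B]$. If $B\in\mcs$ and $g[f[B]]\sue C$, then
\[
B\cap\neg\clf[\clg[C]]\sue\pf[\pg[C]]\cap\pf[\pg[\neg C]]=\pf[\pg[C\cap\neg C]]=\pf[\pg[\OS]]=\OS .
\]
Here $B$ and $\neg\clf[\clg[C]]$ are both basic domains, and meets in $\bds$ are intersections, so their meet is $\OS$. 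Distributivity of $\bds$ then gives $B=(B\cap\clf[\clg[C]])\vee(B\cap\neg\clf[\clg[C]])=B\cap\clf[\clg[C]]$, that is, $B\sue\clf[\clg[C]]$.

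This is exactly the paper's proof of (BC2), and the same complement trick reappears in Lemma~\ref{adj nos fechados}. Once $(gf)^\cf_\leftarrow=\clf\clg$ is established, your arguments for (BC3) and (BC4) go through as written.
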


\begin{proof}
	Let $f\colon \mathcal{S}_1\to \mathcal{S}_2$ and $g\colon \mathcal{S}_2\to \mathcal{S}_3$ be basic continuous maps. Clearly, $gf$ satisfies (BC1).
To check (BC2), let
$D\in \mathcal{C}(\mathcal{S}_3)$. Since $D$ and $\clg [D]$ are both closed, we may use (BC3) on $g$ to conclude that
	$$\neg f^\cf_\leftarrow[\clg[D]]\sue f^{-1}[\neg \clg [D]]\sue f^{-1}[\pg[\neg D]].$$
	Then, by property (BC1), for any $C\in\mathcal C(\mathcal S_1)$,
	\begin{align*}
		g[f[C]]\sue D & \Rightarrow C\sue f^{-1}[\pg[D]]\\
		& \Rightarrow C\cap f^{-1}[\pg[\neg D]]\sue f^{-1}[\pg[D]]\cap f^{-1}[\pg[\neg D]]\\
		& \Rightarrow C\cap \neg f^{\cf}_\leftarrow[\clg[ D]]= \OS\\
		& \Rightarrow C\sue f^{\cf}_\leftarrow\clg[ D].
	\end{align*}
	On the other hand,
	$$C\sue f^{\cf}_\leftarrow[\clg[ D]]\Rightarrow C\sue f^{-1}[\clg[ D]]\Rightarrow C\sue f^{-1}[\pg[ D]]\Rightarrow f[g[C]]\sue D.$$
    This shows that $gf$ satisfies (BC2) with $(gf)^\cf_\leftarrow=f^\cf_\leftarrow\clg$.

    Properties (BC3) and (BC4) follow immediately from the last identity  $(gf)^\cf_\leftarrow=\clf\, g^\cf_\leftarrow$.
\end{proof}

It follows easily that basic zero-dimensional spaces, together with basic continuous maps, form a category. We denote it by \[\Bds.\]

\subsubsection*{\bf Example.}
The category $\loc$ of locales and localic maps is (non-fully) embeddable in $\Bds$.

\bigskip
In order to provide more examples, we need the following auxiliary results.

\begin{prop}\label{lema cl top}
	Let $\mathcal{S}=(X, \mathcal{C}(\mathcal{S}), \mathcal{B}d(\mathcal{S}))$ and $\mathcal{T}=(Y, \mathcal{C}(\mathcal{T}), \mathcal{B}d(\mathcal{T}))$. If $\bds$ and $\bdt$ are topological closure spaces, then a map  $f\colon X \to Y$ is a basic continuous map between $\mathcal{S}$ and $\mathcal{T}$ if and only if $f^{-1}[C]\in \mathcal{C}(\mathcal{S})$ for every $C\in \mathcal{C}(\mathcal{T})$.
\end{prop}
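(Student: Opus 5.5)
The plan is to exploit the one feature that topological closure systems add: in $\bds$ and $\bdt$ finite joins are unions. In particular, for every closed $C$ one has $C\cup\neg C=|\mathcal T|$ and $C\cap \neg C=\OS$. Since $\OS$ is the bottom of the closure system, it is contained in every basic domain.

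\emph{Necessity.} Let $f$ be basic continuous and $C\in\mathcal C(\mathcal T)$. I will show $\pf[C]=\clf[C]$, which gives $\pf[C]\in\mathcal C(\mathcal S)$.
\begin{enumerate}[(a)]
\item The inclusion $\clf[C]\sue\pf[C]$ holds because $\clf[C]$ is the largest closed domain contained in $\pf[C]$.
\item For the converse, take $x\in\pf[C]$ and suppose $x\notin\clf[C]$.
\item Since $\clf[C]\cup\neg\clf[C]=|\mathcal S|$, we get $x\in\neg\clf[C]$. By (BC3) this gives $x\in\pf[\neg C]$.
\item Hence $f(x)\in C\cap\neg C=\OS$, so $x\in\pf[\OS]=\OS$ by (BC1).
\item But $\OS\sue\clf[C]$, a contradiction.
\end{enumerate}

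\emph{Sufficiency.} Assume $\pf[C]\in\mathcal C(\mathcal S)$ for all $C\in\mathcal C(\mathcal T)$, and put $\clf[C]:=\pf[C]$. Then (BC2) is just the set-theoretic equivalence $f[B]\sue C\iff B\sue \pf[C]$. The central step is to prove
\[
\neg\pf[C]=\pf[\neg C]\qquad\text{for every closed }C.
\]
\begin{enumerate}[(a)]
\item Since preimages preserve unions and intersections, $\pf[C]\cup\pf[\neg C]=|\mathcal S|$ and $\pf[C]\cap\pf[\neg C]=\pf[\OS]$.
\item The complement $\neg\pf[C]$ in $\bds$ satisfies $\pf[C]\cap\neg\pf[C]=\OS\sue\pf[\OS]\sue\pf[\neg C]$ and $\pf[C]\cup\neg\pf[C]=|\mathcal S|$. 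From these, $\neg\pf[C]\sue\pf[\neg C]$, which is (BC3).
\item Once (BC1) is available, the reverse inclusion also holds, because then both sets are complements of $\pf[C]$ in the lattice of subsets.
\item With this equality, (BC4) follows termwise: both sides of (BC4) are finite unions, preimage preserves finite unions, and each term satisfies $\pf[C_i]\cup\pf[\neg D_i]$, so the two sides coincide.
\end{enumerate}

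\emph{Main obstacle: (BC1) in the sufficiency direction.} We must show $\pf[\OS]=\OS$ when we only know that closed domains pull back to closed domains, and $\OS$ need not itself be closed. What I would try first is to write $\OS=C\cap\neg C$ for some closed $C$. Then $\pf[\OS]=\pf[C]\cap\pf[\neg C]$, and I would compare this with $\pf[C]\cap\neg\pf[C]=\OS$.

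This comparison requires $\pf[\neg C]\sue\neg\pf[C]$. To get it, I would show that $\pf[\neg C]$ is a basic domain: in the topological setting, $\neg C$ should be expressible through closed domains, for instance as the intersection of the closed domains $D$ with $D\cup C=|\mathcal T|$. Its preimage would then be an intersection of closed domains, hence lie in $\bds$. Distributivity would then force it to equal the complement $\neg\pf[C]$, giving (BC1).

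If this fails in general, there is a degenerate case to check separately, where $\mathcal C(\mathcal T)$ is empty, so that $\bdt=\{|\mathcal T|\}$. In that case I expect the statement to need (BC1) either to hold trivially or to be read into the hypothesis.
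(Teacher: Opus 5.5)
Your necessity direction is correct and matches the paper's argument. Once one knows $\OS=\emptyset$, your steps (b)--(e) collapse to the single inclusion $\pf[C]=X\smallsetminus\pf[Y\smallsetminus C]\sue X\smallsetminus(X\smallsetminus\clf[C])=\clf[C]$, which comes from (BC3).

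\textbf{The gap is in sufficiency.} You never prove (BC1), and step (b) also uses $\OS\sue\pf[\OS]$ without justification.

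Your suggested repair cannot work, because an open domain need not be an intersection of closed domains. Take the Sierpi\'nski closure space $X=\{1,2\}$, $\mathcal C=\{\emptyset,\{1\},X\}$. There $\neg\{1\}=\{2\}$, which is not an intersection of closed domains.

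More decisively, under the reading you adopt, the statement is false. On that reading, ``topological'' means only closed under binary unions, so $\OS$ may be non-empty. Take $\mathcal T=(\{0\},\{\{0\}\},\{\{0\}\})$ and $\mathcal S=(\{a,b\},\mathcal P(\{a,b\}),\mathcal P(\{a,b\}))$. The constant map pulls the only closed domain $\{0\}$ back to $\{a,b\}\in\mathcal C(\mathcal S)$, yet $\pf[\OS]=\{a,b\}\neq\emptyset=\OS$. In general, any constant map into a point of a non-empty $\OS$ of $\mathcal T$ breaks (BC1). So the obstruction is $\OS\neq\emptyset$, not $\mathcal C(\mathcal T)=\emptyset$, and no cleverer argument can close the gap on your reading.

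\textbf{What is missing is a definitional point.} In the paper, finite unions include the empty one. Compare $\mho^{\mathfrak f}$ in Proposition~2.2.2, and the identification of topological closure spaces with $\TOP$. Hence $\emptyset\in\bds$ and $\emptyset\in\bdt$, so $\OS=\emptyset$ in both spaces. Since binary joins are unions, $\neg C$ is then the set-theoretic complement. With this:
\begin{itemize}
\item (BC1) is just $\pf[\emptyset]=\emptyset$;
\item (BC3) is the identity $X\smallsetminus\pf[C]=\pf[Y\smallsetminus C]$;
\item (BC4) follows because preimages commute with finite unions. It needs only the inclusion (BC3), not the equality of your step (c).
\end{itemize}
This is why the paper can dismiss the converse as straightforward.
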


\begin{proof}
	Suppose that $(f, \clf)$ is a $\cf$-basic continuous pair and consider $C\in \mathcal{C}(\mathcal{T})$. By property (BC3),
	$$\pf[C]=X\smallsetminus\pf[Y\smallsetminus C]\sue X\smallsetminus (X\smallsetminus \clf[C])=\clf[C].$$
Since the converse inclusion holds always, we have $\pf[C]=\clf[C]\in \mathcal{C}(\mathcal{S})$.
	
	Conversely, it is straightforward to see that $f$ is basic continuous (just take $\clf[C]=\pf[C]$).
\end{proof}

The following proposition is obvious.

\begin{prop}\label{prop3.7}
	Let $X$ be a set, $\mathcal{F}\sue \mathcal{P}( X)$ with $\emptyset, X\in \mathcal{F}$, and $$\mathcal B=\{C\cup (X\smallsetminus D) \mid C, D \in \mathcal F \}.$$ Then
	$(X,\mathcal{F},\Omega \mho^\mathfrak{f}(\mathcal{B}))$
	is a basic zero-dimensional space.
\end{prop}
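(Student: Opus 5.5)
We verify the three clauses of Definition \ref{definiçao bods} for $\mathcal{B}d=\Omega \mho^\mathfrak{f}(\mathcal{B})$, using Proposition \ref{prop1.4.6}(2).

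First, $\mathcal{B}d$ is the least topological closure system containing $\mathcal{B}$. So it contains $X$, is closed under arbitrary intersections, and is closed under finite unions. Since the meet of a family of sets is its intersection and the binary join is the union, $\mathcal{B}d$ is a sublattice of $\mathcal{P}(X)$ with the set-theoretic operations. In particular it is distributive. Its bottom element is $\emptyset$: because $\emptyset,X\in\mathcal F$, we have $\emptyset=\emptyset\cup(X\smallsetminus X)\in\mathcal{B}$.

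Next, $\mathcal{F}\subseteq\mathcal{B}d$. Indeed, for $C\in\mathcal F$ we have $C=C\cup(X\smallsetminus X)\in\mathcal{B}$, using $X\in\mathcal F$. Likewise $X\smallsetminus D=\emptyset\cup(X\smallsetminus D)\in\mathcal{B}$ for $D\in\mathcal F$, using $\emptyset\in\mathcal F$. Since $\mathcal{B}d$ is a set lattice with $\emptyset$ and $X$, the set $X\smallsetminus C$ is a complement of $C$ in $\mathcal{B}d$. Hence every member of $\mathcal{F}$ is complemented in $\mathcal{B}d$, with $\neg C=X\smallsetminus C$, and joins in $\mathcal{B}d$ are unions.

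It remains to show that every $M\in\mathcal{B}d$ has the required form. Since $M\in\Omega\mho^\mathfrak{f}(\mathcal{B})$, we can write $M=\tbigcap_{i\in J} U_i$, where each $U_i$ is a finite union of elements of $\mathcal{B}$. Then
\[U_i=\tbigcup_{k=1}^{n_i}\bigl(C_i^k\cup(X\smallsetminus D_i^k)\bigr)=C_i^1\vee\neg D_i^1\vee\dots\vee C_i^{n_i}\vee\neg D_i^{n_i},\]
which is exactly the required form.

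Two degenerate cases need a word. An empty union is $\emptyset$, which can be written as $\emptyset\vee\neg X$ with $n_i=1$. An empty intersection is $X$, which we can write as $X\vee\neg X$.

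The only point needing care is that joins in $\mathcal{B}d$ coincide with unions. This is exactly what topologicality of $\Omega\mho^\mathfrak{f}$ provides, and without it distributivity would also fail to be obvious. Everything else is routine.
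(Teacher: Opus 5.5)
Your proof is correct and complete. The paper gives no argument, calling the proposition obvious, and your verification is exactly the routine check that remark intends: $\Omega\mho^\mathfrak{f}(\mathcal{B})$ is closed under arbitrary intersections and finite unions, so it is a distributive set-sublattice of $\mathcal{P}(X)$; the hypothesis $\emptyset, X\in\mathcal F$ gives $\mathcal F\cup\{X\smallsetminus D\mid D\in\mathcal F\}\subseteq\mathcal{B}$, so complements in $\mathcal{B}$ are set complements; and the required representation is then just the definition of $\Omega\mho^\mathfrak{f}$, including the degenerate empty-union and empty-intersection cases you handle.
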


\begin{exams}\label{ex closure space}
	(1) Recall that a topological space $(X,\tau)$ is {\em zero-dimensional} if $\tau$ has a basis consisting of clopen sets \cite{Eng}.
		Each zero-dimensional topological space $(X,\tau)$ induces a canonical b0-space $$(X,\mathcal{C},\tau^{\,\mathrm c})$$ with $\mathcal C$ the set of all clopen subsets of $X$ and $\tau^{\,\mathrm c}$ the set of all closed subsets.  It follows from  Proposition \ref{lema cl top} that the category of zero-dimensional topological spaces is (non-fully) embeddable in $\Bds$.

\smallskip\noindent (2)
		Each closure space $(X, \mathcal{C})$ induces a b0-space $$(X,\mathcal{C},\Omega \mho^\mathfrak{f}(\mathcal{B}))$$ where $\mathcal B=\{C\cup (X\smallsetminus D) \mid C, D \in \mathcal C \}$ (by Proposition \ref{prop3.7}).
		It then follows from Proposition \ref{lema cl top} that the category of closure spaces is fully embeddable in $\Bds$.
		In particular, since $\TOP$ is equivalent to the category of topological closure spaces, it is then also fully embeddable in $\Bds$.

\smallskip\noindent (3)
Using similar arguments, one can show that
 the category of measurable spaces and measurable functions and
 the category of posets and right adjoint maps are both fully embeddable in $\Bds$.
\end{exams}

\begin{rema}
As mentioned previously, the category of b0-spaces and basic continuous maps was first proposed in \cite{EPP}. Our definition above is slightly different:
\begin{itemize}
\item
In the definition of a b0-space, we take $\mathcal{B}d$ as $\Omega\rotatebox[origin=c]{180}{$\Lambda$}^\mathfrak{f}_{\mathcal{B}d}(\mathcal{B})$ (instead of $\Omega(\mathcal{B})$), in order to guarantee that $\mathcal{B}d$ is a coframe.
\item
	In the definition of a basic continuous map $f\colon\mathcal{S\to \mathcal{T}}$, we do not demand it  to satisfy $\pf [C]\in \mathcal{C}(S)$ for every $C\in \mathcal{C}(\mathcal{T})$. Instead, we require the existence of a map $\clf\colon \mathcal{C}(\mathcal{T})\to\mathcal{C}(\mathcal{S})$ and condition (BC4).
    With these adjustments, we will get the crucial Lemma \ref{adj nos fechados} and the duality principle (see Note \ref{rema dual}).
    \end{itemize}
\end{rema}

\section{Basic images and preimages}

	As it should be clear from the examples above, the image of a  closed domain by a basic continuous map does not need to be  closed, not even a basic domain
    (consider e.g. $\mathcal{S}=\mathcal T=(\mathbb{R}, \{\emptyset, \mathbb R\}, \{\emptyset, \mathbb R\})$ and the map $f\colon \mathbb R \to \mathbb R$ defined by $f(x)=1$).
	Nevertheless, since $\mathcal{B}d$ is a closure system, Proposition \ref{sistemas e operadores de fecho} guarantees that it is associated with the closure operator
	\begin{equation*}
\langle Z \rangle =\tbigcap \{S\in \mathcal{B}d \mid Z\subseteq S\}\quad \mbox{ for all } Z \subseteq X.
	\end{equation*}
	Using this, we may associate to each basic continuous map $f\colon \mathcal{S}\to \mathcal{T}$ the map
	\[
			\rf= (M \mapsto  \langle f[M] \rangle)\colon \mathcal{B}d(\mathcal{S})  \to \mathcal{B}d(\mathcal{T})
\]
	which will be referred to as the {\em basic image of $f$}.

We will show now that we can extend the map $\clf$ defined in the closed domains to a map $\lf$ defined in arbitrary basic domains.
This map will be the b0-analogue of the localic preimage and will play a crucial role in the theory.

We will present the results in their most general form, but will prove them only for the case $\mathcal{B}d=\Omega (\mathcal{B})$, that is, where
 every basic domain $M\in \mathcal{B}d$ is of the form
$$M=\tbigcap_{i\in J}(C_i\vee \neg D_i)$$
for some $C_i, D_i \in \mathcal{C}$.
The proofs can be easily adapted to the general case $\mathcal{B}d=\Omega\rotatebox[origin=c]{180}{$\Lambda$}^\mathfrak{f}(\mathcal{B})$, they only require heavier
notation.

\begin{lemma}\label{adj nos fechados}
	Let $f\colon \mathcal{S}\to \mathcal{T}$ be a basic continuous map. Then, for every $M \in \mathcal{B}d(\mathcal{S})$ and every $C_i, \,D_i\in\mathcal{C}(\mathcal{T})$, $i=1,\ldots,n$,
	$$f[M]\sue \tbigvee_{i=1}^n( C_i\vee \neg D_i)\ \Leftrightarrow\ M \sue  \tbigvee_{i=1}^n( \clf[ C_i]\vee \neg \clf [D_i]).$$
\end{lemma}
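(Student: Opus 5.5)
The plan is to prove the two implications separately. The implication ``$\Leftarrow$'' should follow directly from (BC4). The implication ``$\Rightarrow$'' I would reduce, by complementation in the distributive lattice $\mathcal{B}d(\mathcal{S})$, to showing that a certain meet is $\OS$, and then conclude with (BC1) and (BC3).

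For ``$\Leftarrow$'': assume $M\sue \tbigvee_{i=1}^n(\clf[C_i]\vee\neg\clf[D_i])$. By (BC4) the right-hand side is contained in $\pf[\tbigvee_{i=1}^n(C_i\vee\neg D_i)]$. Hence $f[M]\sue\tbigvee_{i=1}^n(C_i\vee\neg D_i)$.

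For ``$\Rightarrow$'', set $N=\tbigvee_{i=1}^n(C_i\vee\neg D_i)\in\bdt$ and $N'=\tbigvee_{i=1}^n(\clf[C_i]\vee\neg\clf[D_i])\in\bds$.
\begin{enumerate}
\item In a distributive lattice, finite joins and meets of complemented elements are complemented, with the usual De Morgan formulas. So $N'$ has the complement $\neg N'=\tbigcap_{i}(\neg\clf[C_i]\cap\clf[D_i])$. Similarly $\neg N=\tbigcap_i(\neg C_i\cap D_i)$ in $\bdt$, and $N\cap\neg N=\OS$. Here I use that meets in a closure system are intersections.
\item By (BC3), $\neg\clf[C_i]\sue\pf[\neg C_i]$. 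By (BC2), applied to $B=\clf[D_i]$, we get $f[\clf[D_i]]\sue D_i$, that is, $\clf[D_i]\sue\pf[D_i]$.
\item Since $M\sue\pf[N]$ by hypothesis, step 2 gives
$$M\cap\neg N'\sue \pf[N]\cap\tbigcap_i\bigl(\pf[\neg C_i]\cap\pf[D_i]\bigr)=\pf[N\cap\neg N]=\pf[\OS].$$
By (BC1) the right-hand side is $\OS$. Hence $M\cap\neg N'=\OS$.
\item By distributivity of $\bds$, $M=M\wedge(N'\vee\neg N')=(M\wedge N')\vee(M\wedge\neg N')=M\wedge N'$. Therefore $M\sue N'$.
\end{enumerate}

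I expect the main obstacle to be the bookkeeping of complements, which sit in two different lattices. The complement $\neg N$ is taken in $\bdt$ and $\neg N'$ in $\bds$, and it is important that binary meets in both are set intersections, so that they commute with $\pf$. Joins, by contrast, are not unions, which is why the argument is routed through meets and complements rather than through joins. The general case $\mathcal{B}d=\Omega\rotatebox[origin=c]{180}{$\Lambda$}^\mathfrak{f}(\mathcal{B})$ needs no change, since $M$ is arbitrary and only finite joins of basis elements appear.
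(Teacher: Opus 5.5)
Your proof is correct and follows essentially the same route as the paper. You use (BC4) for ``$\Leftarrow$''. For ``$\Rightarrow$'' you intersect $M\sue\pf[N]$ with the complement, use (BC3) together with $\clf[D_i]\sue\pf[D_i]$ and (BC1) to get $M\cap\neg N'=\OS$, and conclude by distributivity; the only difference is that you treat general $n$ explicitly via De Morgan, whereas the paper writes out the case $n=1$.
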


\begin{proof}
	We will prove the result for $n=1$ only, but the arguments extend easily to any $n\in\mathbb N$.

	By (BC1) and (BC3) we have
	\begin{align*}
		f[M]\sue C\vee \neg D &\Rightarrow M\sue \pf[C\vee \neg D]\\
		& \Rightarrow M \cap \pf[\neg C \cap D] \sue \pf[C\vee \neg D]\cap \pf[\neg C \cap D]  \\
		& \Rightarrow M \cap \pf[\neg C] \cap \pf[D] \sue \OS \\
		& \Rightarrow M \cap (\neg \clf[ C] \cap \clf[D]) \sue \OS \\
		& \Rightarrow M \sue  \clf[ C] \vee \neg \clf[D].
	\end{align*}
	On the other hand, by property (BC4),
	$$M \sue  \clf[ C] \vee \neg \clf[D]\Rightarrow M \sue \pf [C\vee \neg D] \Rightarrow f[M]\sue C\vee \neg D. \qedhere $$
\end{proof}

\begin{prop}\label{prop adjunto de rf}
	For any basic continuous map $f\colon \mathcal{S} \to \mathcal{T}$, its basic image $\rf$ has a right adjoint.
\end{prop}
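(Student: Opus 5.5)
Since $\mathcal{B}d(\mathcal{S})$ and $\mathcal{B}d(\mathcal{T})$ are complete lattices (closure systems, indeed coframes), by (G3) it suffices to show that $\rf$ preserves arbitrary joins. I will instead construct the right adjoint $\lf$ explicitly, which is what the paper needs anyway. Work in the simplified case $\mathcal{B}d(\mathcal{T})=\Omega(\mathcal{B}(\mathcal{T}))$, so every $N\in\bdt$ can be written as $N=\tbigcap_{i\in J}(C_i\vee\neg D_i)$ with $C_i,D_i\in\mathcal{C}(\mathcal{T})$. Define
\[
\lf[N]=\tbigcap_{i\in J}\bigl(\clf[C_i]\vee\neg\clf[D_i]\bigr)\in\bds .
\]

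The first step is to check that $\lf$ is well defined, i.e.\ independent of the chosen representation of $N$. To do this, I will show that $\lf[N]$ equals the intersection of all $\clf[C]\vee\neg\clf[D]$ over \emph{all} pairs $(C,D)$ with $N\sue C\vee\neg D$. Hence it suffices to prove that if $N=\tbigcap_i(C_i\vee\neg D_i)\sue C\vee\neg D$, then $\tbigcap_i(\clf[C_i]\vee\neg\clf[D_i])\sue\clf[C]\vee\neg\clf[D]$. This is the main obstacle. The plan is to set $M=\tbigcap_i(\clf[C_i]\vee\neg\clf[D_i])\in\bds$ and use Lemma~\ref{adj nos fechados}. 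For each $i$, $M\sue\clf[C_i]\vee\neg\clf[D_i]$, so by the lemma $f[M]\sue C_i\vee\neg D_i$. Intersecting over $i$ gives $f[M]\sue N\sue C\vee\neg D$, and the lemma again yields $M\sue\clf[C]\vee\neg\clf[D]$. The same argument shows $\lf$ is monotone. In the general case $\Omega\rotatebox[origin=c]{180}{$\Lambda$}^\mathfrak{f}(\mathcal{B})$, one uses finite joins $\tbigvee_{k}(C_k\vee\neg D_k)$ in place of single basic elements, and the $n$-ary form of Lemma~\ref{adj nos fechados} makes the argument go through verbatim.

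Next I will verify the adjunction $\rf[M]\sue N\iff M\sue\lf[N]$ for $M\in\bds$ and $N$ as above. Since $N\in\bdt$ is closed, $\rf[M]=\langle f[M]\rangle\sue N$ iff $f[M]\sue N$. This holds iff $f[M]\sue C_i\vee\neg D_i$ for all $i$, which by Lemma~\ref{adj nos fechados} holds iff $M\sue\clf[C_i]\vee\neg\clf[D_i]$ for all $i$, i.e.\ iff $M\sue\lf[N]$. Monotonicity of $\rf$ is clear, since $\langle-\rangle$ and direct images are monotone. Therefore $\rf\dashv\lf$.

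Finally, I will note that $\lf$ extends $\clf$. Taking $N=C$ and $D=X$ (assuming $X\in\mathcal{C}$, or otherwise writing $C=C\vee\neg\,\text{top}$ suitably), we get $\lf[C]=\clf[C]$. I would also remark that, once the adjunction is established, the well-definedness could alternatively be seen from uniqueness of adjoints: $\lf[N]$ is forced to be the largest $M$ with $f[M]\sue N$.
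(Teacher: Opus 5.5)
Your proof is correct, but it is organized differently from the paper's. The paper never writes down $\lf$ inside this proof. It uses Lemma~\ref{adj nos fechados} to show that $\rf[\tbigvee_i M_i]$ and $\tbigvee_i \rf[M_i]$ are contained in exactly the same basic elements $C\vee\neg D$. Since $\mathcal{B}(\mathcal{T})$ is a meet-base of $\bdt$, it concludes that $\rf$ preserves arbitrary joins, and then obtains the right adjoint abstractly from (G3), using completeness of $\bds$ and $\bdt$. The formula of Proposition~\ref{formula da preimagem} is read off only afterwards from the adjunction.

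You instead take that formula as the definition and check the universal property directly. This is exactly the paper's post-proposition computation run in the other direction. It gives you Propositions~\ref{prop adjunto de rf} and~\ref{formula da preimagem} in a single step, with no appeal to (G3) and no join-preservation argument. The cost is the representation-independence check, which you carry out correctly but which is not really an obstacle. For any fixed representation of $N$, your adjunction computation already shows that $\tbigcap_i(\clf[C_i]\vee\neg\clf[D_i])$ is the largest $M\in\bds$ with $\rf[M]\sue N$, so it cannot depend on the representation (as you note at the end).

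Your closing remark that $\lf$ extends $\clf$ is not part of the statement, and as written its justification is incomplete: even when $X\in\mathcal{C}(\mathcal{T})$ you need $\neg\clf[X]=\OS$, which comes from (BC3) and (BC1). A representation-free argument is this. First, $\clf[C]\sue\lf[C]$ by (BC2) and the adjunction. Second, $\lf[C]\cap\neg\clf[C]\sue\pf[C]\cap\pf[\neg C]=\pf[\OS]=\OS$ by the adjunction, (BC3) and (BC1). Hence $\lf[C]\sue\clf[C]$ by distributivity.
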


\begin{proof}
	By property (G3) of Galois adjunctions, it suffices to show that $\rf$ preserves arbitrary joins.
	
	Let $M_i\in \mathcal{B}d(\mathcal{S})$ ($i\in J$) and $C, D \in \mathcal{{C}(\mathcal{T})}$. By the previous lemma,
	\begin{align*}
		\rf[\tbigvee_{i\in J} M_i]\sue C\vee \neg D &\Leftrightarrow f[\tbigvee_{i\in J} M_i]\sue C \vee \neg D\\
		&\Leftrightarrow \tbigvee_{i\in J} M_i\sue \clf [C] \vee \neg \clf [D]\\
		&\Leftrightarrow  M_i\sue \clf [C] \vee \neg \clf [D]\ \text{ for all $i\in J$}\\
		&\Leftrightarrow  \rf [M_i]\sue  C \vee \neg D\ \text{ for all $i\in J$}\\
		&\Leftrightarrow  \tbigvee_{i\in J} \rf [M_i]\sue  C \vee \neg D.
	\end{align*}
	Hence $ \rf\left [\tbigvee_{i\in J} M_i\right]=\tbigvee_{i\in J}\rf [M_i]$ since $\mathcal{B}(\mathcal{T})$ is a meet-base of $\mathcal{B}d(\mathcal{T})$.
\end{proof}

Given a basic continuous map $f\colon \mathcal{S} \to \mathcal{T}$, we will denote the right adjoint of $\rf\colon \mathcal{B}d(\mathcal{S})  \to \mathcal{B}d(\mathcal{T})$ by $\lf$ and refer to it as the  {\em basic preimage of $f$}.

To obtain an explicit formula for $\lf[K]$ we first take $C_i, D_i\in \mathcal{C}(\mathcal{S})$ such that
$K=\tbigcap_{i\in J}(C_i\vee \neg D_i)$
and apply Lemma \ref{adj nos fechados} to conclude that, for every $M\in \mathcal{B}d(\mathcal{S})$,
\begin{align*}
	M\sue \lf[K]&\Leftrightarrow \rf [M]\sue K\\
	&\Leftrightarrow \rf [M]\sue C_i\vee \neg D_i\ \text{ for all $ i \in J$}\\
	&\Leftrightarrow M \sue \clf [C_i]\vee \neg \clf[D_i]\ \text{ for all $ i \in J$}\\
	&\Leftrightarrow M \sue \tbigcap_{i\in J} (\clf [C_i]\vee \neg \clf[D_i]).
\end{align*}

Then, immediately:

\begin{prop}\label{formula da preimagem}
	For each basic continuous map $f\colon \mathcal{S} \to \mathcal{T}$ and each basic domain \[K=\tbigcap_{i\in J}(C_i^1\vee \neg D_i^1\vee \dots \vee C_i^{n_i}\vee \neg D_i^{n_i}),\] with $C^1_i, D^1_i, \ldots, C_i^{n_i}, D_i^{n_i}\in \mathcal{C}(\mathcal{T})$ for every $i\in J$,
	$$\lf [K]=\tbigcap_{i\in J} \bigl(\clf[C_i^1]\vee \neg \clf[D_i^1]\vee \dots \vee \clf[C_i^{n_i}]\vee \neg \clf[D_i^{n_i}]\bigr).$$
\end{prop}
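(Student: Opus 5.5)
The plan is to characterise $\lf[K]$ through the Galois adjunction $\rf\dashv\lf$ of Proposition \ref{prop adjunto de rf}. Set
\[
N=\tbigcap_{i\in J}\bigl(\clf[C_i^1]\vee \neg \clf[D_i^1]\vee \dots \vee \clf[C_i^{n_i}]\vee \neg \clf[D_i^{n_i}]\bigr).
\]
Each factor is a finite join, in $\mathcal{B}d(\mathcal{S})$, of closed and open domains, so $N\in\mathcal{B}d(\mathcal{S})$ because $\mathcal{B}d(\mathcal{S})$ is a closure system. Since both $N$ and $\lf[K]$ lie in $\mathcal{B}d(\mathcal{S})$, it suffices to show that for every $M\in\mathcal{B}d(\mathcal{S})$ one has $M\sue \lf[K]$ if and only if $M\sue N$. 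The Yoneda-type argument, applied with $M=N$ and with $M=\lf[K]$, then gives equality.

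The chain of equivalences runs as follows. First, $M\sue\lf[K]$ holds iff $\rf[M]\sue K$, by the adjunction. Second, $\rf[M]=\langle f[M]\rangle$ is the least basic domain containing $f[M]$, and $K$ is a basic domain, so $\rf[M]\sue K$ iff $f[M]\sue K$. Third, since $K$ is an intersection, this holds iff $f[M]\sue C_i^1\vee\neg D_i^1\vee\dots\vee C_i^{n_i}\vee\neg D_i^{n_i}$ for every $i\in J$. Fourth, each such condition holds iff $M\sue \clf[C_i^1]\vee\neg\clf[D_i^1]\vee\dots\vee\clf[C_i^{n_i}]\vee\neg\clf[D_i^{n_i}]$, by Lemma \ref{adj nos fechados} applied with $n=n_i$. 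Finally, taking all $i\in J$ together, this is exactly $M\sue N$.

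The only delicate point is that Lemma \ref{adj nos fechados} was proved in detail only for $n=1$, whereas the proposition needs it for arbitrary finite $n$. If I had to supply that case, I would repeat the $n=1$ argument. For the forward direction, intersect $M$ with $\pf\bigl[\tbigcap_k(\neg C^k\cap D^k)\bigr]$, which is the preimage of the complement of $\tbigvee_k(C^k\vee\neg D^k)$ in the distributive lattice. Then (BC1) gives
\[
M\cap\tbigcap_k\pf[\neg C^k]\cap\pf[D^k]=\OS .
\]
Using (BC3) together with $\clf[D^k]\sue\pf[D^k]$, this yields $M\cap\tbigcap_k(\neg\clf[C^k]\cap\clf[D^k])=\OS$. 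Complementation in the distributive lattice $\mathcal{B}d(\mathcal{S})$ then gives $M\sue\tbigvee_k(\clf[C^k]\vee\neg\clf[D^k])$. The reverse direction is exactly (BC4).
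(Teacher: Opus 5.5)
Your proposal is correct and follows essentially the same route as the paper. The paper obtains the formula from the chain $M\sue\lf[K]\Leftrightarrow\rf[M]\sue K$, splits $K$ into its factors, and applies Lemma \ref{adj nos fechados} to each factor, writing this out only for $n_i=1$. Your check of the general-$n$ case of that lemma, via the complement $\tbigcap_k(\neg C^k\cap D^k)$ together with (BC1), (BC3) and (BC4), correctly fills in the step the paper leaves to the reader.
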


\section{A characterization of basic continuous maps}

Now that the basic preimage $\lf$ has been defined in all basic domains, we can use it to describe basic continuous maps.

\begin{theo}[Extension]\label{teo ext}
	 A map $f\colon |\mathcal{S}|\to |\mathcal{T}|$ between b0-spaces is basic continuous if and only if it satisfies the following properties:
	\begin{enumerate}[\em (BC1')]
		\item $\pf[\OS]=\OS$.
		\item There exists a map $f_\leftarrow\colon \mathcal{B}d(\mathcal{T})\to\mathcal{B}d(\mathcal{S})$ such that, for every $M \in \mathcal{B}d(\mathcal{S})$ and $ K\in \mathcal{B}d(\mathcal{T})$,  $$f[M]\subseteq K \mbox{ iff } M\subseteq \lf[K].$$
		\item For every $C\in\mathcal{C}(\mathcal{T})$,
		$\lf[C]\in \mathcal{C}(\mathcal{S})$.
		\item For every $C\in \mathcal{C}(\mathcal{T})$,  $\lf[\neg C]=\neg \lf[C]$.
	\end{enumerate}
	Moreover, the map $\lf$ in \textup(BC2'\textup)  is precisely the basic preimage of $f$.
\end{theo}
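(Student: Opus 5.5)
The proof splits into the two implications. The forward direction uses the results of Section 4, and the converse is checked condition by condition.

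\emph{Necessity.} Let $f$ be basic continuous. Condition (BC1') is (BC1). For (BC2') we take $\lf$ to be the basic preimage, the right adjoint of $\rf$ given by Proposition \ref{prop adjunto de rf}. Since $K\in\bdt$ is a basic domain, $\langle f[M]\rangle\sue K$ holds iff $f[M]\sue K$. Hence $f[M]\sue K$ iff $\rf[M]\sue K$ iff $M\sue\lf[K]$.

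For (BC3') and (BC4') we apply Proposition \ref{formula da preimagem} to the representations $C=C\vee\neg 1$ and $\neg C=\OS\vee\neg C$, where $1=|\mathcal T|$ and $\OS$ must be expressible through closed domains. To avoid depending on whether $|\mathcal{T}|$ and $\OS$ lie in $\mathcal C(\mathcal T)$, it is cleaner to use Lemma \ref{adj nos fechados} with $n=1$ together with (BC2).

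\begin{itemize}
\item For (BC3'): for closed $B$ we have $B\sue\lf[C]$ iff $f[B]\sue C$ iff $B\sue\clf[C]$. This shows $\clf[C]$ is the largest closed domain below $\lf[C]$. To get equality in $\bds$ we use the formula $\lf[C]=\clf[C]$, which follows from Proposition \ref{formula da preimagem} once we note that $\clf$ sends top to top. The top case follows from (BC2): $f[X]\sue Y$ forces $X\sue\clf[Y]$.
\item For (BC4'): similarly $\lf[\neg C]=\neg\clf[C]$, using $\clf[\OS]=\OS$ when $\OS$ is closed. That fact comes from (BC2) and (BC1): $\clf[\OS]\sue\pf[\OS]=\OS$.
\end{itemize}

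If $\OS$ or the top is not in $\mathcal C$, I would instead verify directly via Lemma \ref{adj nos fechados}-style arguments that $M\sue\lf[\neg C]$ iff $M\cap\clf[C]=\OS$. This uses (BC1) and (BC3) as in the proof of that lemma. The identity then gives $\lf[\neg C]=\neg\clf[C]$ by distributivity of $\bds$, since complements are pseudocomplements.

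\emph{Sufficiency.} Suppose (BC1')--(BC4') hold, and define $\clf=\lf|_{\mathcal C(\mathcal T)}$, which lands in $\mathcal C(\mathcal S)$ by (BC3').
\begin{itemize}
\item (BC1) is (BC1').
\item (BC2) is the restriction of (BC2') to closed domains.
\item For (BC3): by (BC4') and (BC2') with $M=\lf[\neg C]$ we get $f[\neg\clf[C]]\sue\neg C$, i.e.\ $\neg\clf[C]\sue\pf[\neg C]$.
\item For (BC4): $\lf$ is a right adjoint (of $M\mapsto\langle f[M]\rangle$, by (BC2')), so it is monotone. Hence $\lf[C_i]\vee\lf[\neg D_i]\sue\lf[\bigvee_i(C_i\vee\neg D_i)]$. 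By (BC4') the left side equals $\clf[C_i]\vee\neg\clf[D_i]$, and the right side is contained in $\pf[\bigvee_i(C_i\vee\neg D_i)]$ by (BC2') applied to $M=\lf[K]$.
\end{itemize}

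Finally, uniqueness of adjoints shows that the $\lf$ in (BC2') is the basic preimage: both are right adjoint to $\rf$. The step I expect to be delicate is (BC4') in the necessity direction. There one must show $\lf[\neg C]$ is exactly $\neg\clf[C]$, not merely contained in it, without assuming the top and bottom are closed. The key tool is the pseudocomplement argument from the proof of Lemma \ref{adj nos fechados}.
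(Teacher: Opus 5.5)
Your proposal follows the paper's proof. For necessity, the paper also uses the adjunction $\rf\dashv\lf$ from Proposition~\ref{prop adjunto de rf} and the preimage formula of Proposition~\ref{formula da preimagem}. For sufficiency, it also restricts $\lf$ to $\mathcal{C}(\mathcal{T})$ and uses $\lf[K]\sue\pf[K]$ together with the monotonicity of $\lf$.

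Your concern about whether $|\mathcal T|$ and $\OS$ are closed is justified, since the paper passes over it. For example, $H\notin\cf H$ when $H$ is a Heyting semilattice without bottom. However, your fallback only treats $\neg C$. For (BC3') without a closed top, use the same pseudocomplement argument:
\[
M\sue\lf[C]\iff f[M]\sue C\iff M\cap\neg\clf[C]=\OS\iff M\sue\clf[C],
\]
which follows from (BC1), (BC3), distributivity and $\clf[C]\sue\pf[C]$. This gives $\lf[C]=\clf[C]$ outright.

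Your derivation of $\clf[|\mathcal T|]=|\mathcal S|$ from (BC2) also tacitly assumes $|\mathcal S|\in\mathcal C(\mathcal S)$. Instead, (BC3) and (BC1) give $\neg\clf[|\mathcal T|]\sue\pf[\OS]=\OS$ directly.
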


\begin{proof}
	Clearly, any basic continuous maps  $f\colon\mathcal{S}\to\mathcal{T}$ satisfies (BC1'). Moreover, by  propositions  \ref{prop adjunto de rf} and
 \ref{formula da preimagem}, its basic preimage $\lf$ also satisfies conditions (BC2')-(BC4').
	
	Conversely, consider the restriction to $\mathcal{C}(\mathcal{T})$ of the map $\lf$ given by (BC2'):
	\[
			\overline f_{\leftarrow}=(C\mapsto \lf[C]) \colon  \mathcal{C}(\mathcal{T})  \to \mathcal{C}(\mathcal{S}).
	\]
	By (BC3') it is well defined.
	Let us show that $(f, \overline f_{\leftarrow})$ is a $\cf$-basic continuous pair.
	
	Clearly $(f,\overline f_{\leftarrow} )$ satisfies conditions (BC1) and (BC2). Since (BC2') implies $\lf[K]\sue \pf[K]$, $(f,\overline f_{\leftarrow} )$ also satisfies  (BC3).
	It remains to check property (BC4). We will only do it for the case $n=1$, as the argument can be immediately extended to any $n\in\mathbb N$:
	
	By (BC2') and \eqref{propriedade do fecho}
	we have an adjunction $\rf \dashv \lf$.
    Since $\lf$ is a right adjoint, it must be order-preserving. Hence
	\begin{equation*}
		\overline f_{\leftarrow} [C]\vee \neg\overline f_{\leftarrow} [D]= \lf[C]\vee \lf[\neg D]\subseteq \lf[C\vee \neg D]\sue\pf[C\vee \neg D].
	\end{equation*}
	
	The fact that $\lf$ is indeed the basic preimage follows from the facts that $f$ is basic continuous and $\rf \dashv \lf$.
\end{proof}

We say that $(f, \lf)$ is a {\em basic continuous pair} to indicate that $f$ is basic continuous, with basic preimage $\lf$.
The basic preimage is the b0 analogue of the localic preimage. The preceding theorem states that a map admits a ``localic preimage" if and only if it is basic continuous.

\section{The basic preimage as a coframe homomorphism}

A fundamental property of localic topology is that in the category of locales the localic preimage $f_{-1}\colon \SL(M)\to\SL(L)$
of a localic map $f\colon L \to M$ is a coframe homomorphism (i.e., it preserves arbitrary meets and finite joins; in particular, it preserves complements).
In this section, we extend this result to b0-spaces by demonstrating that $\lf$ is a coframe homomorphism whenever $(f,\lf)$ is a basic continuous pair.

%The arguments used here are a slight adaptation of those presented in \cite{PP} for the case of locales and localic maps.

\begin{theo}\label{lf e coframe homo}
	Let $(f, \lf)\colon \mathcal S\to \mathcal T$ be a basic continuous pair between b0-spaces. Then $\lf$ is a coframe homomorphism.
\end{theo}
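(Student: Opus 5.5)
Since $\rf\dashv\lf$ (Proposition \ref{prop adjunto de rf}), the basic preimage $\lf$ is a right adjoint, and by (G2) it preserves all existing infima. Meets in $\mathcal{B}d$ are intersections, so $\lf$ preserves arbitrary meets, including the empty one: $\lf[|\mathcal{T}|]=|\mathcal{S}|$. What remains is to show that $\lf$ preserves finite joins, namely $\lf[\OS]=\OS$ and $\lf[K_1\vee K_2]=\lf[K_1]\vee\lf[K_2]$.

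For the bottom element, note that $\OS$ is the meet of the empty family of closed domains of $\mathcal{T}$. It is also expressible in basis form, e.g.\ as $C\cap\neg C$ with $C\in\mathcal{C}(\mathcal{T})$, so that Proposition \ref{formula da preimagem} applies. A more direct route uses (BC2'): $\lf[\OS]\sue\pf[\OS]=\OS$ by (BC1'). Hence $\lf[\OS]=\OS$.

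For binary joins I will use the explicit formula of Proposition \ref{formula da preimagem} together with the join formula \eqref{152ast} of Proposition \ref{prop tecnica}. Write
\[K_1=\tbigcap_{i}a_i,\qquad K_2=\tbigcap_{j}b_j,\]
where each $a_i,b_j$ is a finite join of elements $C\vee\neg D$ with $C,D\in\mathcal{C}(\mathcal{T})$. By \eqref{152ast}, $K_1\vee K_2=\tbigcap_{i,j}(a_i\vee b_j)$, and each $a_i\vee b_j$ is again a finite join of basis elements. Define $\phi(a)$ as the corresponding finite join of the elements $\clf[C]\vee\neg\clf[D]$. Applying Proposition \ref{formula da preimagem} to this representation gives
\[\lf[K_1\vee K_2]=\tbigcap_{i,j}\phi(a_i\vee b_j)=\tbigcap_{i,j}\bigl(\phi(a_i)\vee\phi(b_j)\bigr),\]
where the last equality holds because $\phi$ is defined termwise on finite joins. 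The elements $\phi(a_i),\phi(b_j)$ are finite joins of complemented elements of the distributive lattice $\mathcal{B}d(\mathcal{S})$. Applying \eqref{152ast} again, now in $\mathcal{B}d(\mathcal{S})$ with $M$ the set of $\clf[C]$'s and their complements, shows that this equals
\[\tbigcap_i\phi(a_i)\vee\tbigcap_j\phi(b_j)=\lf[K_1]\vee\lf[K_2],\]
using Proposition \ref{formula da preimagem} once more.

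The main obstacle is well-definedness: Proposition \ref{formula da preimagem} must yield the same value regardless of which representation of $K$ is chosen. This is in fact guaranteed, because the derivation preceding that proposition characterises $\lf[K]$ by the adjunction, so every representation computes the same right adjoint. I need to make this observation explicit. Beyond that, the only care required is to match \eqref{152ast} exactly, which requires the distributivity of complemented elements in a distributive complete lattice, already used in the proof of Proposition \ref{prop tecnica}. If I wanted a more conceptual route for the $\supseteq$ inclusion, I would note that monotonicity alone gives $\lf[K_1]\vee\lf[K_2]\sue\lf[K_1\vee K_2]$, and so only the reverse inclusion needs the formula above.
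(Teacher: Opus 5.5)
Your proposal is correct and is essentially the paper's proof. Meets are preserved because $\lf$ is right adjoint to $\rf$, and the bottom is handled by $\lf[\OS]\sue\pf[\OS]=\OS$. Binary joins follow by expanding $K_1\vee K_2$ with the join formula \eqref{152ast} in $\mathcal{B}d(\mathcal{T})$, applying Proposition \ref{formula da preimagem}, and regrouping with the same formula in the coframe $\mathcal{B}d(\mathcal{S})$; you treat general finite joins of basis elements where the paper writes out only $n=1$, and your explicit representation-independence remark is a worthwhile addition. One small slip: $\OS$ is the join, not the meet, of the empty family, though your argument never relies on that remark.
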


\begin{proof}
	Since $\lf$ is a right adjoint,  it preserves arbitrary meets.
	In order to show that it preserves binary joins, consider $K,\, L\in \mathcal{B}d(\mathcal{T})$ and $C_i, D_i, E_j, F_j \in \mathcal{C}(\mathcal{T})$, $i\in I, j\in J$, such that
	$$K=\tbigcap_{i\in I}(C_i\vee \neg D_i) \qtq{and} L=\tbigcap_{j\in J}(E_j \vee \neg F_j).$$
	Then use Proposition  \ref{formula da preimagem} and the fact that basic domains form a coframe to obtain
	\begin{align*}
		\lf[K\vee L]&= \lf [\tbigcap_{i, j} (C_i\vee \neg D_i \vee E_j \vee \neg F_j)]
		\\
		&= \tbigcap_{i, j} (\lf [C_i]\vee \neg \lf [ D_i] \vee \lf [E_j] \vee\neg \lf [F_j])\\
		&= \tbigcap_{i\in I} (\lf [C_i]\vee \neg \lf [ D_i]) \vee \tbigcap_{j\in J} (\lf [E_j] \vee\neg \lf [F_j])\\
		&=  \lf [\tbigcap_{i\in I}(C_i\vee \neg  D_i)] \vee  \lf [\tbigcap_{j\in J}(E_j \vee\neg F_j)]=\lf[K]\vee \lf[L].
	\end{align*}
Moreover,
	$\lf [\OS]\sue \pf [\OS]=\OS,$
	so $\lf$ also preserves the empty join.
\end{proof}

\begin{coro}
	Let $(f, \lf)$ be a basic continuous pair. Then $\lf$ preserves complements.
\end{coro}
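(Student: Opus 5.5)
The plan is to use only the fact, from Theorem \ref{lf e coframe homo}, that $\lf$ is a coframe homomorphism, together with the uniqueness of complements in a distributive lattice. Recall that $\mathcal{B}d(\mathcal{S})$ and $\mathcal{B}d(\mathcal{T})$ are coframes, so they are distributive lattices. Preserving complements should then be a purely lattice-theoretic consequence: a bounded-lattice homomorphism between distributive lattices preserves whatever complements exist.

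First step. Let $K\in\mathcal{B}d(\mathcal{T})$ have a complement $\neg K$ in $\mathcal{B}d(\mathcal{T})$, so that
\[
K\cap\neg K=\OS \qtq{and} K\vee\neg K=|\mathcal{T}|.
\]
Since $\lf$ preserves arbitrary meets, in particular binary meets, we get
\[
\lf[K]\cap\lf[\neg K]=\lf[\OS].
\]
Since $\lf$ preserves finite joins, including the empty join, we have $\lf[\OS]=\OS$. This was shown at the end of the proof of Theorem \ref{lf e coframe homo}.

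Second step. Again because $\lf$ preserves binary joins,
\[
\lf[K]\vee\lf[\neg K]=\lf[|\mathcal{T}|].
\]
This is the top element $|\mathcal{S}|$, since $\lf$ preserves the empty meet (top) as a right adjoint. Alternatively, $\rf[M]\sue|\mathcal{T}|$ holds for every $M$, so $|\mathcal{S}|\sue\lf[|\mathcal{T}|]$ by the adjunction.

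Conclusion. Thus $\lf[\neg K]$ is a complement of $\lf[K]$ in $\mathcal{B}d(\mathcal{S})$. Complements are unique in a distributive lattice, so $\lf[\neg K]=\neg\lf[K]$.

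There is no real obstacle here. The only points needing care are that the bottom and top are preserved, and these are covered respectively by the empty-join part of Theorem \ref{lf e coframe homo} and by the right-adjoint property.
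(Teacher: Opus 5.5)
Your argument is correct and is essentially the paper's approach: the corollary is stated without proof as an immediate consequence of Theorem \ref{lf e coframe homo}. A coframe homomorphism preserves top, bottom, and binary meets and joins, and complements in the distributive lattice $\mathcal{B}d(\mathcal{S})$ are unique; your explicit checks that $\lf[\OS]=\OS$ and $\lf[|\mathcal{T}|]=|\mathcal{S}|$ supply exactly the details the paper leaves implicit.
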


\section{An application to Heyting semilattices}

We will now apply the theory of b0-spaces to address the issues encountered in \cite{EPP} when attempting to generalise certain results from locales to Heyting semilattices.

\bigskip
	Let $H$ be a Heyting semilattice and set $$\mathcal{B}=\{\cf a \vee\of b \mid a, b \in H\}$$ (where the joins are taken in $\mathcal{I}dl(H)$). The {\em basic domains}  of $H$ are the elements of $\mathcal{B}d(H)=\Omega \rotatebox[origin=c]{180}{$\Lambda$}_{\mathcal{I}dl(H)}^\mathfrak{f}(\mathcal{B}).$

By definition, basic domains are the subsets of $H$ of the form
$$\tbigcap_{i\in J}(\cf a_i^1\vee \of b_i^1\vee \dots \vee \cf a_i^{n_i}\vee \of b_i^{n_i}).$$
Obviously, $\of a\in \mathcal{B}d(H)$ for every $a\in H$. This is also true for closed sets:

\begin{prop}
For every $a\in H$, $\cf a \in \mathcal{B}d(H)$.
\end{prop}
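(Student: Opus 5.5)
The plan is to show that $\cf a$ is an intersection of basic sets, i.e.
\[
\cf a=\tbigcap\{\cf a\vee \of b \mid b\in B_a\}
\]
for a suitable family $B_a\subseteq H$. A single term does not suffice: in general there is no $b$ with $\of b=\OS$, since $H$ need not have a bottom.

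I first record that $\cf a=\ur a$ is indeed an ideal of $H$. It is closed under binary meets and contains $1$. For $t\ge a$ and any $x$ we have $x\to t\ge t\ge a$ by (H3). Hence $\cf a\in\mathcal{I}dl(H)$, and $\cf a\subseteq \cf a\vee\of b$ for every $b$. So the inclusion $\subseteq$ is automatic, and everything hinges on the reverse inclusion.

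For the reverse inclusion, fix $x\in H$ with $x\not\ge a$. I need to produce $b=b_x$ together with an ideal $I_x\in\mathcal{I}dl(H)$ such that $\ur a\subseteq I_x$, $\of b\subseteq I_x$ and $x\notin I_x$. Then $\cf a\vee\of b\subseteq I_x$, because the join is taken in $\mathcal{I}dl(H)$, and so $x\notin \cf a\vee\of b$. My first candidate is $b=a\to x$ (note $b\neq 1$ by (H2)), with $I_x$ an ideal cut out by a condition involving $b$ and $a$. Two natural choices to test are
\[
\{y\in H\mid b\to y=y\}\qquad\text{and}\qquad \{y\in H\mid a\le (b\to y)\to y\}.
\]
For each candidate I would verify:
\begin{enumerate}[(i)]
\item it is an ideal, using (H1) for meets and (H4) for the implications $z\to y$;
\item it contains $\ur a$ and $\of b$, using (H3)--(H5);
\item it excludes $x$, using $x\not\ge a$ together with identities such as (H6), $x=((a\to x)\to x)\wedge(a\to x)$.
\end{enumerate}

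The main obstacle is this choice of the separating ideal $I_x$, which amounts to understanding the join $\cf a\vee\of b$ in $\mathcal{I}dl(H)$ concretely. If neither candidate works, my fallback is to use the complementation of $\cf b$ and $\of b$ in the distributive lattice $\mathcal{I}dl(H)$. The point is that the sets $\cf a\vee\of b$ together with the sets $\of c$ already give enough ``separating'' basic elements, so one can separate in the larger set of basic domains. Concretely, I would prove $\ur a=\tbigcap_{c}\bigl(\cf a\vee\of c\bigr)$ with $c$ ranging over elements for which $\of c\cap\cf a$ is small. I would then deduce membership via distributivity of $\mathcal{I}dl(H)$, writing $\cf a=\cf a\vee\tbigwedge_c\of c$ where possible. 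The existence of enough such $c$ is exactly where the Heyting-semilattice identities (H1)--(H6) must be used carefully.
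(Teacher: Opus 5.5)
There is a genuine gap: you never carry out the step everything hinges on. Your reduction is fine. $\cf a$ is an ideal, the inclusion $\cf a\sue\tbigcap_b(\cf a\vee\of b)$ is trivial, and it suffices to find, for each $x\not\ge a$, some $b$ with $x\notin\cf a\vee\of b$. But that last step is the whole content of the proposition. You list two candidate ideals ``to test'', verify neither, and keep a fallback in case both fail, so as written this is a plan, not a proof.

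The first candidate actually fails. $\{y\mid b\to y=y\}$ is just $\of b$, and $\ur a\sue\of b$ is false in general: in the chain $0<m<1$ with $a=m$ and $x=0$, you get $b=m\to 0=0$ and $\of 0=\{1\}\not\supseteq\{m,1\}$.

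The second candidate does work, but only through checks you did not make. Put $\varphi(y)=(b\to y)\to y$. One needs:
\begin{enumerate}[(a)]
\item $\varphi(y)\wedge\varphi(z)\le\varphi(y\wedge z)$, by (H1) and antitonicity of $\to$ in its first argument;
\item $\varphi(z\to y)=(z\wedge(b\to(z\to y)))\to y\ge\varphi(y)$, by (H4) and $z\wedge(b\to(z\to y))\le b\to y$;
\item $\varphi(y)\ge y$ and $\varphi(b\to c)=1$;
\item $\varphi(x)=((a\to x)\to x)\to x=a\to x$, by (H5) twice, which is $\ge a$ only when $a\le x$.
\end{enumerate}

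The fallback is too vague to count either. Distributivity of $\mathcal{I}dl(H)$ alone does not let you move $\cf a\vee(-)$ past an infinite intersection, and ``enough such $c$'' is never specified. It can be rescued by letting $c$ range over all of $H$. Then $\tbigcap_{c\in H}\of c=\OS$, since $y\in\of y$ forces $y=y\to y=1$. Because $\cf a$ is complemented in the complete distributive lattice $\mathcal{I}dl(H)$, it satisfies $\cf a\vee\tbigcap_c\of c=\tbigcap_c(\cf a\vee\of c)$, which is the law used in the proof of Proposition~\ref{prop tecnica}. None of this is in your text.

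The concrete idea you are missing is the description of the join you say you need. In $\mathcal{I}dl(H)$ one has $T_1\vee T_2=T_1\vartriangle T_2$: this set contains $T_1\cup T_2$ because $1$ lies in every ideal, and it is an ideal by (H1). Hence $\cf a\vee\of b=\{a'\wedge(b\to c)\mid a'\ge a,\ c\in H\}$, and no separating ideal is needed.

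The paper takes $b=x\wedge a$. From $x=a'\wedge((x\wedge a)\to c)$, properties (H1), (H2) and (H4) give $1=(x\wedge a)\to x=(x\wedge a)\to c$, hence $x=a'\ge a$.

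Your own choice $b=a\to x$ works just as directly. From $x\le(a\to x)\to c$ and $x\le a\to x$ you get $x\le c$. So $(a\to x)\to c\ge(a\to x)\to x\ge a$ by (H5), and therefore $x=a'\wedge((a\to x)\to c)\ge a$.
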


\begin{proof}
It is clear that $\cf a \sue \tbigcap \{\cf a \vee \of b \mid b\in H\}. $ Conversely, let $x\in \tbigcap \{\cf a \vee \of b \mid b\in H\}$. Then there exist $a'\geq a$ and $c$ in $H$ such that $x=a'\wedge (x\wedge a)\to c$. By properties (H1), (H2) and (H4) in \ref{Heytsemil} we have
$$1=(x\wedge a)\to x= ((x\wedge a)\to a') \wedge ((x\wedge a)\to((x\wedge a)\to c))= (x\wedge a)\to c.$$ Therefore  $x=a'\geq a$ and
$\cf a=\tbigcap \{\cf a \vee \of b \mid b\in H\}$.
\end{proof}

Hence $\of a, \cf a \in \mathcal{B}d(H)$
and they are complemented to each other.

\bigskip
Moreover, it follows from Proposition \ref{prop tecnica} that the system $\mathcal{B}d(H)$ is a coframe.
Hence, the triple $$(H, \cf H, \mathcal{B}d(H))$$ is a basic zero-dimensional space.

In Heyting semilattices, basic domains generalize
nuclear ranges. To prove this, we need the following result from \cite[Prop. 2.15]{PPT}.

\begin{lemma}
	Let $\nu \colon H\to H$ be a nucleus on a Heyting semilattice $H$. For every $a, b \in H$,
	$$a\to \nu (b)=\nu(a)\to \nu (b)$$
\end{lemma}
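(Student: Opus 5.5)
We need to prove that for a nucleus $\nu$ on a Heyting semilattice $H$ and all $a,b\in H$ one has $a\to\nu(b)=\nu(a)\to\nu(b)$. The plan is to prove the two inequalities separately.

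The inequality $\nu(a)\to\nu(b)\le a\to\nu(b)$ is immediate. The operation $x\mapsto x\to c$ is antitone in $x$: if $x\le y$, then $x\wedge(y\to c)\le y\wedge(y\to c)\le c$, so $y\to c\le x\to c$ by \eqref{Heyting}. Since $a\le\nu(a)$ (a closure operator is extensive, from \eqref{propriedade do fecho} with $b:=a$), antitonicity gives this inequality.

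For the converse, $a\to\nu(b)\le\nu(a)\to\nu(b)$, it suffices by \eqref{Heyting} to show
\[
\nu(a)\wedge(a\to\nu(b))\le\nu(b).
\]
First I would show that $a\to\nu(b)$ is a fixed point of $\nu$. Write $x=a\to\nu(b)$. Then $a\wedge x\le\nu(b)$, so, since $\nu$ preserves binary meets and is monotone and idempotent,
\[
a\wedge\nu(x)\le\nu(a)\wedge\nu(x)=\nu(a\wedge x)\le\nu\nu(b)=\nu(b).
\]
Hence $\nu(x)\le a\to\nu(b)=x$, and with extensivity $\nu(x)=x$.

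Then, using meet preservation once more,
\[
\nu(a)\wedge x=\nu(a)\wedge\nu(x)=\nu(a\wedge x)\le\nu(\nu(b))=\nu(b),
\]
which is the required inequality.

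The only point needing care is extracting monotonicity and idempotence of $\nu$ from the single axiom \eqref{propriedade do fecho}, and this is routine. Setting $b:=a$ gives $a\le\nu(a)$ and $\nu(a)\le\nu(a)$, so extensivity holds. Taking $a:=\nu(b)$ gives $\nu\nu(b)\le\nu(b)$, hence idempotence. If $a\le b\le\nu(b)$, then $\nu(a)\le\nu(b)$, hence monotonicity. I expect no real obstacle here; the key step is the fixed-point observation $\nu(a\to\nu(b))=a\to\nu(b)$.
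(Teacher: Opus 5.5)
Your proof is correct and complete. The paper gives no proof of this lemma to compare against: it imports the statement from \cite[Prop.~2.15]{PPT}. Your derivation of extensivity, idempotence and monotonicity from \eqref{propriedade do fecho} is right, and both inequalities check out.

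One simplification: the fixed-point step $\nu(a\to\nu(b))=a\to\nu(b)$ is not needed. In your last display you only use $\nu(a)\wedge x\le\nu(a)\wedge\nu(x)$, which is just extensivity $x\le\nu(x)$. So the converse inequality follows in one line from meet preservation, monotonicity and idempotence:
\[
\nu(a)\wedge(a\to\nu(b))\le\nu(a)\wedge\nu(a\to\nu(b))=\nu\bigl(a\wedge(a\to\nu(b))\bigr)\le\nu\nu(b)=\nu(b).
\]
The fixed-point fact you proved is still worth remembering, but it is a by-product rather than the key step. It says that the range $N_\nu$ is closed under $a\to(-)$. Together with closure under binary meets and $1\in N_\nu$, this means nuclear ranges are ideals of $H$.
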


The next result is an adaptation of the corresponding result in \cite{PP} for locales and localic maps.
The proof in \cite{PP} extends easily to Heyting semilattices by replacing $x\vee a$ by $(a\to x)\to x$ and using properties (H5) and (H6).

\begin{prop}\label{prop nuc em bd}
	Let $\nu \colon H \to H$ be a nucleus on a Heyting semilattice, and let $N_\nu\subseteq H$ be its range. Then $N_\nu$ is a basic domain of the  form
	$$
	N_\nu=\tbigcap\{\cf a \vee \of b \mid \nu(a)=\nu (b)\}.
	$$
\end{prop}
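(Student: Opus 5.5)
The plan is to prove the displayed equality by double inclusion. Membership in $\mathcal{B}d(H)$ is then automatic, since the right-hand side is an intersection of elements of $\mathcal{B}$. As in the proof that $\cf a\in\mathcal{B}d(H)$, I will use the concrete description of the join in $\mathcal{I}dl(H)$. Since $\mathcal{I}dl(H)$ is distributive with $\cf a\vee\of b=\cf a\vartriangle \of b$ for complemented ideals (this is how that earlier proof reads it), we have
\[
\cf a\vee\of b=\setof{a'\wedge (b\to c)}{a'\geq a,\ c\in H}.
\]
The key tool is the lemma from \cite{PPT}: $y\to\nu(z)=\nu(y)\to\nu(z)$.

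For $N_\nu\subseteq \tbigcap\{\cf a\vee\of b\mid \nu(a)=\nu(b)\}$, fix $x\in N_\nu$ and $a,b$ with $\nu(a)=\nu(b)$. By (H6),
\[
x=((b\to x)\to x)\wedge(b\to x).
\]
Here $b\to x\in\of b$, so it remains to show $(b\to x)\to x\geq a$, i.e.\ $a\wedge(b\to x)\leq x$. Since $x=\nu(x)$, the lemma gives
\[
b\to x=\nu(b)\to x=\nu(a)\to x=a\to x,
\]
and $a\wedge(a\to x)\leq x$. Hence $x\in\cf a\vee\of b$. (This is where the locale argument's ``$x\vee a$'' is replaced by $(a\to x)\to x$, using (H5)/(H6).)

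For the reverse inclusion, let $x$ lie in the intersection. I choose the particular pair $a=\nu(x)$, $b=x$, which is admissible since $\nu(\nu(x))=\nu(x)$. Then $x=a'\wedge(x\to c)$ for some $a'\geq\nu(x)$ and $c\in H$. Exactly as in the proof that $\cf a\in\mathcal{B}d(H)$, (H1), (H2) and (H4) give
\[
1=x\to x=(x\to a')\wedge(x\to(x\to c))=x\to c,
\]
so $x=a'\geq\nu(x)$. Since $\nu(x)\geq x$ always, $x=\nu(x)\in N_\nu$.

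The only step needing care is the first inclusion: one must see that the lemma converts $b\to x$ into $a\to x$. The choice $b=x$, $a=\nu(x)$ in the second inclusion is the other non-obvious point. The rest is routine Heyting arithmetic.
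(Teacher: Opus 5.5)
Your proof is correct and follows the paper's argument essentially verbatim. You use the nucleus lemma to identify $b\to x$ with $a\to x$, together with (H5)/(H6), for the inclusion $N_\nu\subseteq\tbigcap\{\cf a\vee\of b\mid \nu(a)=\nu(b)\}$; for the converse you make the same choice $a=\nu(x)$, $b=x$, followed by the same (H1)/(H2)/(H4) computation. As a minor aside, the description $\cf a\vee\of b=\cf a\vartriangle\of b$ needs neither distributivity nor complementedness, since $T_1\vartriangle T_2$ is already an ideal for any ideals $T_1,T_2$ by (H1).
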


\begin{proof}
Consider $x\in N_\nu$ and $a,b \in H$ such that $\nu(a)=\nu (b)$.
Let $B=\tbigcap\{\cf a \vee \of b \mid \nu(a)=\nu (b)\}$. By the Lemma, we have
	$$a\to x=\nu(a)\to \nu (x)=\nu(b)\to \nu (x)=b\to x.$$
	Then, by properties (H5) and (H6),
	$$x=((a\to x)\to x) \wedge (a\to x)= ((a\to x)\to x) \wedge (b\to x)\in \cf a\vee \of b.$$
	Conversely, let $x\in B$. Since $ \nu (\nu (x))=\nu(x)$, we have $x\in \cf (\nu(x))\vee \of x$. Hence there are $a\geq \nu (x)$ and $c\in H$  such that $x=a\wedge (x\to c) $. Then, applying (H2), (H1) and (H4), we get
	$$1=x\to x=(x\to a)\wedge x\to (x\to c)=x\to c.$$
	Hence $x=a\geq \nu (x)\geq x$ and  $x=\nu (x)\in N_\nu$.
\end{proof}

This proposition makes it clear that basic domains of $H$ are precisely the sublocales of $H$ when $H$ is a complete Heyting lattice.

Regarding morphisms we have from \cite[Thm. 4.6]{EPP}:

\begin{theo}\label{pre image de l-morfismos}
	A map between Heyting semilattices $f\colon H_1\to H_2$ is continuous if and only if the following conditions hold:
	\begin{enumerate}[\em (1)]
		\item $f^{-1}[\OS]=\OS$.
		\item For every $b\in H_2$, $f^{-1}[\cf b]$ is closed, with  $\pf [\cf b]=\cf (f^\ast (b))$.
		\item For every $b\in H_2$, $\neg \pf [\cf b]\sue f^{-1}[\of b]$, where the complement is taken in $\mathcal{I}dl(H_1)$.
	\end{enumerate}
\end{theo}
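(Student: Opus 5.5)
The plan is to check both implications directly, using Proposition \ref{prop carac cont} as the working definition of continuity. Throughout I will use that $\of a$ and $\cf a$ are complementary in $\mathcal{I}dl(H)$, which is distributive.

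\textbf{Forward direction.} Let $f$ be continuous with left adjoint $f^*$ preserving finite meets.
\begin{enumerate}[(i)]
\item Condition (1) is exactly (1) of Proposition \ref{prop carac cont}, because $f^{-1}[\OS]=\{a \mid f(a)=1\}$.
\item For (2), I use the adjunction: $f(x)\ge b$ iff $x\ge f^*(b)$. Hence $f^{-1}[\cf b]=\cf(f^*(b))$, which is closed.
\item For (3), the complement of $\cf(f^*b)$ in $\mathcal{I}dl(H_1)$ is $\of(f^*b)$. So it suffices to show that $f$ maps $\of(f^* b)$ into $\of b$.
\item Every element of $\of(f^*b)$ has the form $f^*(b)\to a$. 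The Frobenius identity (3) of Proposition \ref{prop carac cont} gives $f(f^*(b)\to a)=b\to f(a)\in\of b$.
\end{enumerate}

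\textbf{Converse.} Assume (1)--(3). By (2), for each $b$ there is an element, call it $f^*(b)$, with $f^{-1}[\cf b]=\cf(f^*(b))$. This says precisely that $f(x)\ge b$ iff $x\ge f^*(b)$. So $f^*$ is a left adjoint of $f$, and it is automatically monotone. Condition (1) of Proposition \ref{prop carac cont} is our (1). It remains to derive the Frobenius identity $f(f^*(b)\to a)=b\to f(a)$ from (3).

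\textbf{Proving Frobenius.}
\begin{enumerate}[(i)]
\item First I will show that $f$ preserves finite meets, since $f$ is a right adjoint. It also preserves the top element $1$, because $f(1)\ge b$ for all $b$ by adjunction.
\item For the inequality $\le$: we have $f(f^*b\to a)\wedge b\le f(f^*b\to a)\wedge f(f^*b)=f(f^*b\wedge(f^*b\to a))\le f(a)$. Here $b\le f(f^*b)$ comes from the unit of the adjunction. This gives $f(f^*b\to a)\le b\to f(a)$.
\item For the inequality $\ge$, I use (3). The element $f^*b\to a$ lies in $\of(f^*b)=\neg\cf(f^*b)=\neg f^{-1}[\cf b]$, which is contained in $f^{-1}[\of b]$. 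Hence $f(f^*b\to a)\in\of b$, that is, $b\to f(f^*b\to a)=f(f^*b\to a)$.
\item Now $f^*b\to a\ge a$, so monotonicity gives $f(f^*b\to a)\ge f(a)$. Therefore $f(f^*b\to a)=b\to f(f^*b\to a)\ge b\to f(a)$.
\end{enumerate}

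This establishes the Frobenius identity, and Proposition \ref{prop carac cont} then yields continuity. (That $f^*$ preserves finite meets also follows from that proposition.)

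\textbf{Main obstacle.} The delicate point is identifying $\neg\cf(f^*b)$ with $\of(f^*b)$ in $\mathcal{I}dl(H_1)$ and using membership in $\of b$ as the fixed-point condition $b\to y=y$. Both are available from Section \ref{sec nuc e id}. Everything else is routine adjunction bookkeeping.
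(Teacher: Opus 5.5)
Your argument is correct. The paper gives no proof of this theorem to compare against; it is quoted directly from \cite[Thm.~4.6]{EPP}. Your reduction to Proposition~\ref{prop carac cont} is a clean, self-contained derivation.

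It also makes clear what each hypothesis contributes:
\begin{itemize}
\item Condition (2) is equivalent to the existence of the left adjoint $f^*$.
\item Given the adjunction, the inequality $f(f^*(b)\to a)\le b\to f(a)$ holds automatically. Your unit/counit computation uses nothing else.
\item Condition (3), i.e.\ $f[\of(f^*(b))]\subseteq\of b$, is equivalent to the reverse inequality in the Frobenius identity.
\end{itemize}

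There are two minor bookkeeping points to fix.
\begin{itemize}
\item In the forward direction, $f^{-1}[\OS]=\OS$ requires $f(1)=1$ in addition to condition (1) of Proposition~\ref{prop carac cont}. Condition (1) alone only gives $f^{-1}[\OS]\subseteq\OS$. The missing fact holds because $f$ is a right adjoint, by the same argument you give in the converse.
\item In the converse, the paper's Galois adjunctions are between order-preserving maps. So you should also record that $f$ itself is monotone. This follows from $f(x)\ge b\Leftrightarrow x\ge f^*(b)$ by taking $b=f(x)$, since then $f^*(f(x))\le x\le y$ whenever $x\le y$.
\end{itemize}
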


\begin{prop}\label{continos sao basic}
	Let $f\colon H_1\to H_2$ be a continuous map between Heyting semilattices. Then $f$ is a basic continuous map $$(H_1, \cf H_1, \mathcal{B}d(H_1))\to (H_2, \cf H_2, \mathcal{B}d(H_2)).$$

\end{prop}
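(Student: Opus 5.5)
We verify (BC1)--(BC4) for $f$, with $\cf$-basic preimage $\clf[\cf b]=\cf(f^*(b))$, where $f^*$ is the left adjoint of $f$ given by Proposition \ref{prop carac cont}. The tool is Theorem \ref{pre image de l-morfismos}, applied to the continuous map $f$. Its condition (1) is exactly (BC1). Its condition (2) says $\pf[\cf b]=\cf(f^*(b))$, a closed domain of $H_1$. Hence, for any $\cf a\in\cf H_1$,
\[f[\cf a]\sue \cf b \iff \cf a\sue \pf[\cf b]=\cf(f^*(b)),\]
which gives (BC2) with $\clf[\cf b]=\cf(f^*(b))$. Since $\clf[\cf b]=\pf[\cf b]$, condition (3) of that theorem reads $\neg\clf[\cf b]\sue \pf[\of b]=\pf[\neg\cf b]$, which is (BC3).

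The real work is (BC4). We need
\[\tbigvee_{i=1}^n\bigl(\cf f^*(a_i)\vee \of f^*(b_i)\bigr)\sue \pf\Bigl[\tbigvee_{i=1}^n(\cf a_i\vee \of b_i)\Bigr],\]
with joins taken in $\mathcal{I}dl(H_1)$ and $\mathcal{I}dl(H_2)$. The right-hand side is $\pf$ of an ideal. The preimage of an ideal under a continuous map is again an ideal, because:
\begin{itemize}
\item $f$ preserves finite meets, being a right adjoint;
\item $f(x\to t)\in T$ whenever $f(t)\in T$. This follows from the Frobenius identity, which gives $f(x\to t)=f(f^*f(x)\to t)$ since $x\le f^*f(x)$ fails in general. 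Use instead $f(x\to t)\le$ ... Concretely, set $y=f(x)$. Then $f(f^*(y)\to t)=y\to f(t)\in T$, and $x\to t\le f^*(y)\to t$ (wrong direction in general). So I would rather check directly that preimages of ideals are closed under the required operations.
\end{itemize}
The safer route is therefore to avoid proving that $\pf[T]$ is an ideal. Instead, we use the explicit description of ideal joins: $\cf a\vee\of b$ in $\mathcal{I}dl(H)$ is $\cf a\vartriangle\of b$-generated. A general element is a finite meet $a'\wedge(b\to c)$ with $a'\ge a$; this is the form used in the proofs above.

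For such an element $x=a'\wedge (f^*(b)\to c)$ with $a'\ge f^*(a)$, we compute
\[f(x)=f(a')\wedge (b\to f(c))\]
by the Frobenius identity. Here $f(a')\ge f f^*(a)\ge a$, so $f(x)\in\cf a\vartriangle\of b\sue \cf a\vee\of b$. For $n$ terms the same computation applies summand-wise: a finite meet of such elements maps to a finite meet of images, again lying in the join.

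The main obstacle is justifying that the join of ideals $\tbigvee_i(\cf a_i\vee\of b_i)$ consists exactly of finite meets of elements from the $\cf a_i$ and $\of b_i$. For this we invoke the proposition from \cite{Ko1} that binary meets in $\mathcal{I}dl(H)$ are given by $\vartriangle$, and that ideals are closed under $\vartriangle$ with opens and closeds. Concretely, one checks that $\cf a_1\vartriangle\of b_1\vartriangle\cdots\vartriangle \cf a_n\vartriangle\of b_n$ is already an ideal. This uses $y\to(p\wedge q)=(y\to p)\wedge(y\to q)$ by (H1), $y\to p\ge p$ for closeds by (H3), and (H4) for opens. Once this is established, (BC4) follows from the displayed computation.
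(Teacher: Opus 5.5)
Your argument is correct and is essentially the paper's proof: (BC1)--(BC3) are read off from Theorem~\ref{pre image de l-morfismos} with $\clf[\cf b]=\cf(f^*(b))$, and (BC4) comes from the computation $f(a'\wedge(f^*(b)\to c))=f(a')\wedge(b\to f(c))$ via the Frobenius identity together with $f(a')\ge ff^*(a)\ge a$; you additionally write out the general-$n$ case using the description of finite joins in $\mathcal{I}dl(H)$ as iterated $\vartriangle$-products, which the paper only indicates. Delete the abandoned bullet asserting that preimages of ideals under continuous maps are ideals: that claim is false in general, and your final argument does not use it. For a counterexample, take the localic map right adjoint to $\{0<m<1\}\to\{0,a,b,1\}$ with $m\mapsto a$; the preimage of $\of m=\{0,1\}$ is $\{0,b,1\}$, which does not contain $b\to 0=a$.
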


\begin{proof}
	It is clear from the previous theorem that $f$ satisfies properties (BC1) to (BC3).
	Let us show that it also satisfies (BC4).  We will do this for $n=1$, but the argument extends easily to every natural number.
	
	By Theorem  \ref{pre image de l-morfismos}, it suffices to prove that $f[\cf (f^*(a))\vee \of (f^*(b))]\subseteq \cf a\vee \of b$.
	So let $x\in \cf (f^*(a))\vee \of (f^*(b))$. Then there are $z\geq f^*(a)$ and $c\in H_1$ such that $x=z\wedge (f^*(b)\to c)$. Applying the Frobenius identity from   Proposition \ref{prop carac cont} and the fact that $f(z)\geq f(f^*(a))\geq a$, we obtain
	\begin{align*}
		f(x)&=f(z\wedge (f^*(b)\to c))= f(z) \wedge f(f^*(b)\to c)\\&=f(z) \wedge( b\to f(c))\in \cf a\vee\of b. \qedhere
	\end{align*}
\end{proof}

\begin{coro}\label{coro pre imagem par hey}
	Let $f\colon H_1\to H_2$ be a continuous map between Heyting semilattices. For each $K\in \mathcal{B}d(H_2)$, there exists the largest basic domain contained in $\pf[K]$, denoted $\lf[K]$. Moreover, the map
	\[
			\lf=(K\mapsto \lf[K]) \colon \mathcal{B}d(H_2)  \to \mathcal{B}d(H_1)
	\]
	is a coframe homomorphism satisfying $\lf[\of b]=\of (f^\ast(b))=\neg \pf [\cf b]$.
\end{coro}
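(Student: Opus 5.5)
The corollary follows by assembling earlier results. By Proposition \ref{continos sao basic}, $f$ is a basic continuous map $(H_1,\cf H_1,\mathcal{B}d(H_1))\to(H_2,\cf H_2,\mathcal{B}d(H_2))$. Its $\cf$-basic preimage is the restriction of $\pf$ to closed sets: $\clf[\cf b]=\pf[\cf b]=\cf(f^*(b))$, by Theorem \ref{pre image de l-morfismos}(2). Proposition \ref{prop adjunto de rf} then supplies the basic preimage $\lf$, the right adjoint of $\rf$. Theorem \ref{lf e coframe homo} gives that $\lf$ is a coframe homomorphism, so the only points left are that $\lf[K]$ is the \emph{largest} basic domain inside $\pf[K]$, and the formula for open sets.

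For the first point I will unwind the adjunction. For $M\in\mathcal{B}d(H_1)$ we have $M\sue\lf[K]$ iff $\rf[M]=\langle f[M]\rangle\sue K$. Since $K$ is a basic domain and $\langle-\rangle$ is the closure operator of the closure system $\mathcal{B}d(H_2)$, this holds iff $f[M]\sue K$, i.e.\ iff $M\sue\pf[K]$. Taking $M=\lf[K]$ shows $\lf[K]\sue\pf[K]$. Any basic domain $M\sue\pf[K]$ then satisfies $M\sue\lf[K]$. So $\lf[K]$ is the largest basic domain contained in $\pf[K]$, which justifies the notation and shows it is independent of the chosen representation of $K$.

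For the formula, I take $K=\of b=\neg\cf b$. Theorem \ref{teo ext}(BC4') (equivalently, Proposition \ref{formula da preimagem} with $C=\OS$ and $D=\cf b$) gives $\lf[\neg\cf b]=\neg\lf[\cf b]$. By (BC3') $\lf[\cf b]=\clf[\cf b]=\pf[\cf b]=\cf(f^*(b))$, so $\lf[\of b]=\neg\cf(f^*(b))=\neg\pf[\cf b]$. Since $\cf a$ and $\of a$ are complements in $\mathcal{I}dl(H_1)$, and hence in $\mathcal{B}d(H_1)$ (both are basic domains, and complements in a distributive lattice are unique), this equals $\of(f^*(b))$.

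The only step needing care is the complement identification. The complement appearing in Theorem \ref{pre image de l-morfismos}(3) is taken in $\mathcal{I}dl(H_1)$, whereas the one in (BC4') is taken in $\mathcal{B}d(H_1)$. They agree because $\cf a\vee\of a=H_1$ and $\cf a\cap\of a=\OS$ hold in both lattices. The joins in $\mathcal{B}d(H_1)$ are those of $\mathcal{I}dl(H_1)$ for the basic elements, since $\mathcal{B}$ is built from joins in $\mathcal{I}dl(H)$ and is closed under finite joins.
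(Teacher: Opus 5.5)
Your proposal is correct and follows the paper's route: the paper states this corollary without proof, as an immediate consequence of Proposition~\ref{continos sao basic} together with Propositions~\ref{prop adjunto de rf} and \ref{formula da preimagem}, Theorems~\ref{teo ext} and \ref{lf e coframe homo}, and Theorem~\ref{pre image de l-morfismos}, which is exactly the assembly you carry out. One small imprecision: (BC3') only says $\lf[\cf b]$ is closed, so it does not by itself give $\lf[\cf b]=\clf[\cf b]$. The equality follows at once from your own ``largest basic domain'' argument, because $\pf[\cf b]=\cf(f^\ast(b))$ is already a basic domain.
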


\begin{prop}
	A map between Heyting semilattices is continuous if and only if it is basic continuous and monotone.
\end{prop}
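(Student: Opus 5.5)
One direction is short. By Proposition \ref{continos sao basic}, a continuous map between Heyting semilattices is basic continuous. It is also monotone, since it is a right adjoint: it has the left adjoint $f^*$ by Proposition \ref{prop carac cont}(2).

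For the converse, let $f\colon H_1\to H_2$ be monotone and basic continuous, with $\cf$-basic preimage $\clf$. I would verify conditions (1)--(3) of Theorem \ref{pre image de l-morfismos}. Condition (1) is exactly (BC1). The substance is condition (2): show that $\pf[\cf b]$ is a closed set $\cf(f^*(b))$, and that $f^*$ is a left adjoint of $f$.

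Fix $b\in H_2$. By (BC2), $\clf[\cf b]=\cf a$ for some $a\in H_1$, and it is the largest closed set contained in $\pf[\cf b]$. Here $a$ is unique by (2.3.2). Since $\cf a\sue \pf[\cf b]$, we get $f(a)\ge b$. Conversely, let $x\in\pf[\cf b]$, i.e.\ $f(x)\ge b$. By monotonicity, $f[\cf x]\sue \cf f(x)\sue \cf b$. Applying (BC2) to the closed domain $\cf x$ then gives $\cf x\sue \cf a$, that is, $x\ge a$. Hence $\pf[\cf b]=\cf a$ exactly.

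Now define $f^*(b)=a$. The identity just proved reads $b\le f(x)\iff f^*(b)\le x$, so $f^*\dashv f$, and condition (2) holds. Condition (3) follows from (BC3): $\neg\pf[\cf b]=\neg\clf[\cf b]\sue \pf[\neg\cf b]=\pf[\of b]$, using $\neg\cf b=\of b$ in $\mathcal{I}dl(H_2)$ and the fact that complements in $\mathcal{B}d$ agree with those in $\mathcal{I}dl$. Theorem \ref{pre image de l-morfismos} then yields continuity.

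The main obstacle is the step $\pf[\cf b]=\clf[\cf b]$. This is precisely where monotonicity enters: it lets us pass from a single point $x\in\pf[\cf b]$ to the whole closed set $\cf x$ mapping into $\cf b$. Without monotonicity, (BC2) only controls closed domains, not points. A minor secondary check is that the complement of $\cf b$ in $\mathcal{B}d(H_2)$ coincides with $\of b$ as computed in $\mathcal{I}dl(H_2)$. This holds because both lattices have the same meets (intersections). Moreover, $\cf b\vee\of b$ is the top element in $\mathcal{I}dl$ and lies in $\mathcal{B}$, so joins of these basic generators agree in the two lattices.
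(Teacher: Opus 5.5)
Your proof is correct and follows essentially the paper's route. Monotonicity combined with (BC2) gives $\clf[\cf b]=\pf[\cf b]$, and then (BC1), (BC3) and Theorem~\ref{pre image de l-morfismos} give continuity; you also spell out two points the paper leaves tacit, namely that $\pf[\cf b]=\cf(f^*(b))$ defines the left adjoint $f^*$, and that the complement of $\cf b$ in $\mathcal{B}d$ is the same $\of b$ as in $\mathcal{I}dl$.
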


\begin{proof}
	Let $f\colon H_1 \to H_2$ be a monotone basic continuous map between Heyting semilattices. Then, for every $a\in H_1$ and $b\in H_2$,
	$$f[\cf a]\sue \cf b  \Leftrightarrow f(a)\leq b \Leftrightarrow f(a)\in \cf b.$$	
	We can now use condition (BC2) to get
	$$a \in \clf[\cf b] \Leftrightarrow \cf (a)\sue \clf[\cf b]\Leftrightarrow f[\cf a]\sue \cf b \Leftrightarrow  f(a) \in \cf b,$$
	and thus $\clf[\cf b]=\pf[\cf b]$. Hence $\pf[\cf b]$ is always closed and by (BC3) it satisfies $\neg \pf [\cf b]\sue \pf[\of b]$ and, by Theorem \ref{pre image de l-morfismos}, $f$ is basic continuous.
	
	The converse follows immediately from Proposition \ref{continos sao basic} and the fact that every continuous map is monotone (since it is a right adjoint).
\end{proof}

\begin{rema}
A basic continuous map between Heyting semilattices is not necessarily monotone (and hence continuous), as some easy examples show.
\end{rema}

\begin{note}
    The effectiveness of the theory of b0-spaces can be observed by comparing the results above about Heyting semilattices with those obtained when one takes the standard approach to generalized sublocales as nuclear ranges. Indeed, it was shown in \cite{EPP} that the system of nuclear ranges may not even be a lattice, while the system of basic domains is always a coframe; moreover, there is no non-complete analogue for the localic preimage when one uses nuclear ranges. Here, it not only exists, but it even provides a coframe homomorphism.
\end{note}

\section{Basic open maps}\label{sec open}

	We say that a basic continuous map $f\colon \mathcal{S}\to \mathcal{T}$ is  a \emph{basic open map} if, for every $U\in \mathcal{O}(\mathcal{S})$, $\rf[U]$ is still an open domain.

\begin{note}\label{rema basic open =open} One might ask why it is the set $\rf[U]$ rather than $f[U]$ that is required to be open.
	The reason for this is the better properties of $\rf$, namely the adjunction $\rf \dashv\lf$.  But this has a consequence: our general notion does not need to coincide with the standard definition of open maps in certain subcategories of $\Bds$. Nevertheless, in some specific settings they do coincide:
	
	\begin{itemize}
		\item $T_D$-closure spaces: As we have seen in Example 2 in  \ref{ex closure space}, the category of closure spaces can be fully embedded in $\Bds$ by sending each closure space $(X, \mc)$ to $(X, \mc, \Omega\mho^\mathfrak{f}(\mathcal{B}))$, with $\mathcal{B}=\{C\cup X\setminus D\mid C, D\in \mc \}$. In case $(X, \mathcal{C})$ is a {\em $T_D$-closure space} (that is, $\overline{\{x\}}\smallsetminus \{x\}$ is closed for every $x\in X$) then, for every $x\in X$
	\begin{equation*}
		X\smallsetminus \{x\}=\overline{\{x\}}\smallsetminus \{x\}\cup X\smallsetminus \overline{\{x\}}\in \mathcal{B}.
	\end{equation*}
        Therefore $\Omega\mho^\mathfrak{f}(\mathcal{B})=\mathcal{P}(X)$.
       Hence $\rf$ is the set-theoretical image and the definitions of basic open map and open map do coincide.
		
In particular, this holds for $T_D$-topological spaces.

		\item Heyting semilattices and {\bf continuous maps}: Let $f\colon H_1\to H_2$ be a continuous map between Heyting semilattices. By  propositions \ref{abertos sao nuc} and  \ref{cont preservam nuc}, we know that $f[\of a]\in \mathcal{N}uc(H_2) \sue \mathcal{B}d(H_2)$. Therefore
		$f[\of a]=\rf[\of a]$ and basic open maps are precisely the open maps. However, this need not be the case when $f$ is a general basic continuous map.
	\end{itemize}
\end{note}

We will now present a characterization of basic open maps between basic zero-dimensional spaces.
First, we need to introduce some notation and terminology.

Given a b0-space $\mathcal{S}=(X, \mathcal{C}, \mathcal{B}d)$, set \[\mathcal{O}^\mathfrak{f}(\mathcal{S})=\rotatebox[origin=c]{180}{$\Lambda$}^\mathfrak{f}\Lambda^\mathfrak{f}\mathcal{O}(\mathcal{S}).\] We will refer to the elements of $\mathcal{O}^\mathfrak{f}(\mathcal{S})$ as {\em finitely generated open domains}.
Further, given a basic continuous pair $(f,\lf)$ between b0-spaces $\mathcal{S}$ and $\mathcal{T}$, we will refer to the map
\[
		\lf^{\,\mathfrak{f} \of} =(V\mapsto \lf[V]) \colon \mathcal{O}^\mathfrak{f}(\mathcal{T})  \to \mathcal{O}^\mathfrak{f}(\mathcal{S})
\]
as the {\em finite open restriction of $\lf$}. (Note that this map is well defined since the basic preimage of an open is still an open and $\lf$ preserves finite joins and finite meets.)

\begin{rema}\label{lema dos complementos} In any distributive lattice
	$L$, for each $a, b, c\in L$ with $c$ complemented,
	$$a\leq  b\vee \neg c\ \Leftrightarrow\ a\wedge c \leq b.$$
\end{rema}

Next lemma provides a useful Frobenius-type condition.

\begin{lemma}\label{lemma frobenius 1}
    Let $f\colon \ms \to \mt $ be a basic continuous map. For any $U\in \mos$ and $V_1, \ldots ,V_n \in \mot$,
    $$\rf[U\cap\lf[V_1 \cap \cdots\cap V_n]]=\rf[U]\cap V_1 \cap \cdots\cap V_n .$$
\end{lemma}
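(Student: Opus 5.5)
The plan is to prove the two inclusions separately. Throughout, write $V=V_1\cap\cdots\cap V_n$. The key observation is that $V$ is a complemented element of $\mathcal{B}d(\mathcal{T})$. Indeed, each $V_k=\neg C_k$ for some $C_k\in\mathcal{C}(\mathcal{T})$, and in the distributive lattice $\mathcal{B}d(\mathcal{T})$ a finite meet of complemented elements is complemented. Its complement is
$$V'=\neg V_1\vee\cdots\vee\neg V_n=C_1\vee\cdots\vee C_n.$$
The same reasoning applies in $\mathcal{B}d(\mathcal{S})$ to $\lf[V]$. By Theorem \ref{lf e coframe homo} and its corollary, $\lf$ preserves finite meets, finite joins and complements. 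Hence $\lf[V]$ is complemented, with complement $\lf[V']$; alternatively, this follows directly from (BC4').

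For the inclusion ``$\subseteq$'' I will use only the adjunction $\rf\dashv\lf$ (Proposition \ref{prop adjunto de rf}). Since $\rf$ is monotone and $U\cap\lf[V]\sue U$, we get $\rf[U\cap\lf[V]]\sue\rf[U]$. Since also $U\cap\lf[V]\sue\lf[V]$, property (G1) gives
$$\rf[U\cap\lf[V]]\sue\rf[\lf[V]]\sue V.$$

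For ``$\supseteq$'', set $W=\rf[U\cap\lf[V]]\in\mathcal{B}d(\mathcal{T})$. The argument is a chain of equivalences:
\begin{itemize}
\item By Remark \ref{lema dos complementos} applied in $\mathcal{B}d(\mathcal{T})$ with the complemented element $V$, the inclusion $\rf[U]\cap V\sue W$ is equivalent to $\rf[U]\sue W\vee V'$.
\item By the adjunction, this is equivalent to $U\sue\lf[W\vee V']$.
\item Since $\lf$ preserves binary joins, $\lf[W\vee V']=\lf[W]\vee\lf[V']$.
\item Since $\lf[V']=\neg\lf[V]$, Remark \ref{lema dos complementos} applied in $\mathcal{B}d(\mathcal{S})$ turns the previous inclusion into $U\cap\lf[V]\sue\lf[W]$.
\end{itemize}
The last inclusion holds by the unit of the adjunction, because $W=\rf[U\cap\lf[V]]$ by definition.

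The only delicate point is the complementation bookkeeping: that $V$ is complemented in $\mathcal{B}d(\mathcal{T})$ with complement the finite join of closed domains, and that $\lf$ carries this complement to the complement of $\lf[V]$. I would justify the latter by preservation of finite joins together with (BC4'), since $\lf[C_k]$ and $\lf[\neg C_k]=\neg\lf[C_k]$ are complementary for each $k$. Everything else is formal manipulation of the Galois adjunction. Note that the argument does not use that $U$ is open; it works for any $U\in\mathcal{B}d(\mathcal{S})$.
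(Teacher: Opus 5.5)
Your proposal is correct and follows essentially the paper's proof. The ``$\subseteq$'' half is identical (monotonicity of $\rf$ plus the counit $\rf\lf\le\mathrm{id}$). For ``$\supseteq$'' you use the same ingredients: complementedness of $V$, Remark \ref{lema dos complementos}, and preservation of complements by $\lf$.

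The only difference is the order of steps. You transpose across $\rf\dashv\lf$ first and then use that $\lf$ preserves binary joins together with the unit. The paper instead pushes $\rf$ through $(U\cap\lf[V])\vee\neg\lf[V]$, using that $\rf$ preserves joins together with the counit.
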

\begin{proof}
Let
$V=V_1 \cap \cdots\cap V_n.$
    From the adjunction $\rf\dashv \lf$, we can use the monotonicity of $f$ and condition (G1) to get
    \begin{align*}
        \rf[U\cap\lf[V]]&\sue\rf[U]\cap \rf[\lf[V]]\sue \rf[U]\cap V.
    \end{align*}
    On the other hand, since $\rf$ is a left adjoint and $\lf$ preserves complements, we have
\begin{align*}
    \rf[U]\sue \rf[U \vee \neg\lf[V] ]
    &= \rf[(U \cap \lf[V] ) \vee \neg\lf[V] ]\\
    &= \rf[U \cap \lf[V] ] \vee \rf[\lf[\neg V] ]\\
    &\sue \rf[U \cap \lf[V] ] \vee \neg V .
\end{align*}
Hence, by the basic property in the note above, $\rf[U]\cap V \sue \rf[U \cap \lf[V]]$.
\end{proof}

We are now able to prove our main result.

\begin{theo}\label{Joyal-Tierney basic}
Let $\ms$ be a b0-space such that $\Lambda^\mathfrak{f}\mos=\mos$ and let $\mt$ be an arbitrary b0-space.
	Let also $(f,\lf)$ be a basic continuous pair between $\mathcal{S}$ and $\mathcal{T}$. Then $f$ is a basic open map if and only if $\lf^{\,\mathfrak{f} \of}$ has a left adjoint $$h\colon \mathcal{O}^\mathfrak{f}(\mathcal{S})\to \mathcal{O}^\mathfrak{f}(\mathcal{T})$$ such that the following conditions hold:
	\begin{enumerate}[\em (1)]
		\item For every $U\in \mathcal{O}(\mathcal{S})$, $h[U]\in \mathcal{O}(\mathcal{T})$.
		\item For every $U\in \mathcal{O}(\mathcal{S})$ and $V_1, \ldots, V_n\in \mathcal{O}(\mathcal{T})$,
		$$h[U\cap \lf[V_1\cap \dots \cap V_n]]=h[U]\cap V_1\cap \dots \cap V_n.$$
	\end{enumerate}
\end{theo}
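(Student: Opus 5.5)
For the forward direction, assume $f$ is basic open. I would take $h$ to be the restriction of the basic image $\rf$ to $\mathcal{O}^\mathfrak{f}(\mathcal{S})$. The first task is to check that it lands in $\mathcal{O}^\mathfrak{f}(\mathcal{T})$. Since $\Lambda^\mathfrak{f}\mos=\mos$, every element of $\mathcal{O}^\mathfrak{f}(\mathcal{S})$ is a finite join $U_1\vee\dots\vee U_k$ of open domains. Because $\rf$ is a left adjoint (Proposition \ref{prop adjunto de rf}), it preserves these joins, so $\rf[U_1\vee\dots\vee U_k]=\rf[U_1]\vee\dots\vee\rf[U_k]$. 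Each $\rf[U_i]$ is open because $f$ is basic open, so the join lies in $\mathcal{O}^\mathfrak{f}(\mathcal{T})$. This is exactly where the hypothesis on $\mos$ is used.

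Adjointness of $h$ and $\lf^{\,\mathfrak{f}\of}$ is then the restriction of the adjunction $\rf\dashv\lf$ to the subposets $\mathcal{O}^\mathfrak{f}(\mathcal{S})$ and $\mathcal{O}^\mathfrak{f}(\mathcal{T})$. Both maps preserve them: $\lf$ does by the remark after the definition of the finite open restriction, and $h$ does by the previous step. Condition (1) is the openness of $f$. Condition (2) is Lemma \ref{lemma frobenius 1} applied with $h=\rf$.

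For the converse, assume $h$ is a left adjoint of $\lf^{\,\mathfrak{f}\of}$ satisfying (1) and (2). The key claim is that $h[U]=\rf[U]$ for every $U\in\mos$; condition (1) then gives that $\rf[U]$ is open, so $f$ is basic open. The inclusion $\rf[U]\sue h[U]$ comes from the unit of the adjunction: $U\sue\lf[h[U]]$, and applying $\rf\dashv\lf$ yields $\rf[U]\sue h[U]$.

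The reverse inclusion $h[U]\sue\rf[U]$ is the main obstacle, because $\rf[U]$ is generally not finitely generated open, so the adjunction for $h$ cannot be used directly. My plan is to test against the meet-base $\mathcal{B}(\mathcal{T})$. It suffices to show that $\rf[U]\sue C\vee\neg D$ implies $h[U]\sue C\vee\neg D$ for $C,D\in\mathcal{C}(\mathcal{T})$. Write $W=\neg C\in\mot$ and use Remark \ref{lema dos complementos}. The condition $\rf[U]\sue C\vee\neg D$ means $\rf[U]\cap W\sue\neg D$. By Lemma \ref{lemma frobenius 1} this is $\rf[U\cap\lf[W]]\sue\neg D$, which gives $U\cap\lf[W]\sue\lf[\neg D]$, an inclusion between elements of $\mathcal{O}^\mathfrak{f}(\mathcal{S})$ (here $U\cap\lf[W]\in\mos$ by the hypothesis $\Lambda^\mathfrak{f}\mos=\mos$). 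Now the adjunction for $h$ gives $h[U\cap\lf[W]]\sue\neg D$. Condition (2) rewrites the left side as $h[U]\cap W$, and Remark \ref{lema dos complementos} turns this back into $h[U]\sue C\vee\neg D$. The general basis elements, finite joins $\bigvee_i(C_i\vee\neg D_i)$, are handled the same way, with $W=\bigcap_i\neg C_i$ a finite meet of opens; this is why (2) is stated for finite meets $V_1\cap\dots\cap V_n$. Since such elements form a meet-base of $\bdt$, we get $h[U]\sue\rf[U]$.
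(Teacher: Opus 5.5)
Your proposal is correct and follows essentially the same route as the paper. In the forward direction you take $h$ to be the restriction of $\rf$, using $\Lambda^\mathfrak{f}\mos=\mos$ and Lemma \ref{lemma frobenius 1}; in the converse you test against the meet-base of $\bdt$ via Remark \ref{lema dos complementos}, Lemma \ref{lemma frobenius 1} and condition (2). The only difference is cosmetic: the paper gets $\rf[U]=h[U]$ from a single chain of equivalences, while you obtain $\rf[U]\sue h[U]$ separately from the unit of $h\dashv\lf^{\,\mathfrak{f}\of}$ (as the paper does in Proposition \ref{Joyal-Tierney basic simples}).
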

\begin{proof}
Suppose $f$ is a basic open map.
Let $U\in \mo^\mathfrak{f}(\ms)$. Since $\mo^\mathfrak{f}(\ms)=\rotatebox[origin=c]{180}{$\Lambda$}^\mathfrak{f}
\Lambda^\mathfrak{f}\mathcal{O}(\mathcal{S})=\rotatebox[origin=c]{180}{$\Lambda$}^\mathfrak{f}\mos$, either $U=\OS$, or there exist $U_1, \dots, U_n\in \mos$ such that $U=\tbigvee_{i=1}^n U_i$. In the first case $\rf[\OS]=\OS\in \mo^\mathfrak{f}(\mt)$. In the second case, since $\rf$ is a left adjoint, we have
$\rf[U]=\tbigvee_{i=1}^n \rf[U_i]\in \mo^\mathfrak{f}(\mt).$
    Hence  the mapping
    	\[
			 h=(U \mapsto \rf [U]) \colon \mo^\mathfrak{f} (\ms)  \to \mo^\mathfrak{f} (\mt)
	\]
    is well defined.
    Because $h$ is a restriction of $\rf$, it immediately follows that $h$ is the left adjoint of $\lf^{\mathfrak{f}\of}$ and that it satisfies condition (1). The fact that $h$ satisfies condition (2) follows immediately from Lemma \ref{lemma frobenius 1}.

    Conversely, suppose that $\lf^{\,\mathfrak{f} \of}$ has a left adjoint $h\colon \mathcal{O}^\mathfrak{f}(\mathcal{S})\to \mathcal{O}^\mathfrak{f}(\mathcal{T})$ satisfying conditions (1) and (2) and pick $U\in \mos$ and $W_1, \ldots, W_n, V_1, \ldots V_n\in \mot$. Let
    $W=  W_1\vee \cdots \vee W_n, \text{ and } V= V_1\cap \cdots\cap V_n. $
    Applying  \ref{lema dos complementos} and \ref{lemma frobenius 1} we get
    \begin{align*}
        \rf[U] \sue W\vee \neg V &\Leftrightarrow \rf[U] \cap V\sue W\\
        &\Leftrightarrow \rf[U \cap \lf[V]] \sue W
        \Leftrightarrow U \cap \lf[V] \sue \lf[ W].
    \end{align*}
    But $h\colon \mathcal{O}^\mathfrak{f}(\mathcal{S})\to \mathcal{O}^\mathfrak{f}(\mathcal{T})$ is the left adjoint of $\lf^{\,\mathfrak{f} \of}$,
    $U \cap \lf[V]\in \mo^\mathfrak{f}(\ms)  \mbox{ and } \ W\in \mo^\mathfrak{f}(T)$. Hence
        \begin{align*}
        \rf[U] \sue W\vee \neg V  &\Leftrightarrow U \cap \lf[V] \sue \lf[ W]\\&
        \Leftrightarrow U \cap \lf[V] \sue \lf^{\mathfrak{f}\of}[ W] \Leftrightarrow h[U \cap \lf[V]] \sue  W\\
        &\Leftrightarrow h[U] \cap V \sue  W \Leftrightarrow h[U] \sue  W\vee \neg V.
    \end{align*}
    Finally, notice that every element in $\mathcal{B}$ is of the form $$ W\vee \neg V=(W_1\vee \cdots \vee W_n) \vee \neg (V_1\cap \cdots\cap V_n),$$ for some $W_1, \ldots, W_n, V_1, \ldots V_n\in \mot$. Since $\mathcal{B}$ is a meet-base of $\bdt$, we conclude that
    $\rf[U]=h[U ]\in \mot.$
\end{proof}

\begin{rema}
The assumption $\Lambda^\mathfrak{f}\mos=\mos$ in this result is essential in the proof of `$\Rightarrow$'.
For example, the following Boolean algebras

\medskip
\begin{center}
\begin{tikzpicture}[scale=.5]
%%% edges
\draw[line width=0.35mm] (2.5,0) -- (2.5,2.25);
\draw[line width=0.35mm] (2.5,0) -- (0,2.25);
\draw[line width=0.35mm] (2.5,0) -- (5,2.25);
\draw[line width=0.35mm] (0,4.5) -- (0,2.25);
\draw[line width=0.35mm] (2.5,4.5) -- (0,2.25);
\draw[line width=0.35mm] (0,4.5) -- (2.5,2.25);
\draw[line width=0.35mm] (5,4.5) -- (2.5,2.25);
\draw[line width=0.35mm] (2.5,4.5) -- (5,2.25);
\draw[line width=0.35mm] (5,4.5) -- (5,2.25);
\draw[line width=0.35mm] (0,4.5) -- (2.5,6.75);
\draw[line width=0.35mm] (2.5,4.5) -- (2.5,6.75);
\draw[line width=0.35mm] (5,4.5) -- (2.5,6.7);
%% vertex labels
\node at (1.85,0) {\small 0};
\node at (1.85,6.75) {\small 1};
\node at (-.75,2.25) {\small$a$};
\node at (3.2,2.25) {\small$b$};
\node at (5.75,2.25) {\small$c$};
\node at (-1,4.5) {\small$d$};
\node at (3.5,4.5) {\small$e$};
\node at (6,4.5) {\small$f$};
\node at (2,-1) { $B_1$};
%% vertices
\draw[fill=black!30!white,line width=0.30mm] (2.5,0) circle (5.5pt);
\draw[fill=black!30!white,line width=0.30mm] (0,2.25) circle (5.5pt);
\draw[fill=black!30!white,line width=0.30mm] (2.5,2.25) circle (5.5pt);
\draw[fill=black!30!white,line width=0.30mm] (5,2.25) circle (5.5pt);
\draw[fill=black!30!white,line width=0.30mm] (0,4.5) circle (5.5pt);
\draw[fill=black!30!white,line width=0.30mm] (2.5,4.5) circle (5.5pt);
\draw[fill=black!30!white,line width=0.30mm] (5,4.5) circle (5.5pt);
\draw[fill=black!30!white,line width=0.30mm] (2.5,6.75) circle (5.5pt);
\end{tikzpicture}
\hspace{10mm}
	\begin{tikzpicture}[scale=.5]
		%%% edges
		\draw[line width=0.35mm] (2.5,1) -- (0,3.25);
		\draw[line width=0.35mm] (2.5,1) -- (5,3.25);
		\draw[line width=0.35mm] (2.5,5.5) -- (0,3.25);
		\draw[line width=0.35mm] (2.5,5.5) -- (5,3.25);
		%% vertex labels
		\node at (1.85,1) {\small$0$};
		\node at (-.75,3.25) {\small$x$};
		\node at (5.75,3.25) {\small$y$};
		\node at (3.5,5.5) {\small$1$};
        \node at (2,0) {$B_2$};
		%% vertices
		\draw[fill=black!30!white,line width=0.30mm] (2.5,1) circle (5.5pt);
		\draw[fill=black!30!white,line width=0.30mm] (0,3.25) circle (5.5pt);
		\draw[fill=black!30!white,line width=0.30mm] (5,3.25) circle (5.5pt);
		\draw[fill=black!30!white,line width=0.30mm] (2.5,5.5) circle (5.5pt);
	\end{tikzpicture}
    \end{center}
induce b0-spaces $$(B_1, \{\downarrow\! 0, \downarrow\! a, \downarrow\! b, \downarrow\! c, \downarrow\! 1\}, \mathfrak I_p(B_1))\ \mbox{ and }\ (B_2, \{\downarrow\! 0, \downarrow\! y , \downarrow\!1 \}, \mathfrak I_p(B_2))$$ (where $\mathfrak I_p(B_i)$ denotes the lattice of all principal ideals of $B_i$).
The map $g\colon B_1\to B_2$ given by
\begin{align*}
    &g(0)=0,\\
    &g(b)=y,\\
    &g(a)=g(c)=g(e)= x,\\
    &g(d)=g(f)=g(1)=1,
\end{align*}
is a basic open map but $g_\leftarrow^{\,\mathfrak{f} \of}$ does not have a left adjoint $h$ satisfying conditions (1) and (2). Indeed, we know from the proof of the theorem that if such an adjoint would exist, then $$g_\rightarrow[U]=h[U]\in \mathcal{O}^\mathfrak{f}(B_2), \mbox{ for all }U\in \mathcal{O}^\mathfrak{f}(B_1).$$ But $g_\rightarrow[\downarrow\! b] = \downarrow\! y \not \in \mathcal{O}^\mathfrak{f}(B_2)$ while $\downarrow\! b\in \mathcal{O}^\mathfrak{f}(B_1)$ (since $\downarrow\! b=\downarrow\! d\,\cap\downarrow\! f$).
\end{rema}

The following is an important case of the previous theorem that will be relevant when studying Heyting semilattices.

\begin{prop}\label{Joyal-Tierney basic simples}
Let $\ms$ and $\mt$ be b0-spaces such that $\Lambda^\mathfrak{f}\mos=\mos$ and $\Lambda^\mathfrak{f}\mot=\mot$.
	Let also $(f,\lf)$ be a basic continuous pair between  $\mathcal{S}$ and $\mathcal{T}$ such that, for every $U\in \mathcal{O}(\mathcal{S})$,
	$$\rf[U]=\tbigcap \{ \neg V\vee W \mid \rf[U]\sue  \neg V\vee W \text{ and } V, W \in \mathcal{O}(\mathcal{T})\}.$$
	Then $f$ is a basic open map if and only if the map
	\[
			\lf^\of=(V\mapsto \lf[V]) \colon \mathcal{O}(\mathcal{T})  \to  \mathcal{O}(\mathcal{S}) 		
	\]
	has a left adjoint $h\colon \mathcal{O}(\mathcal{S})\to\mathcal{O}(\mathcal{T})$ such that
		$$h[U\cap \lf[V]]=h[U]\cap V \quad \mbox{for every } U\in \mathcal{O}(\mathcal{S}) \mbox{ and } V\in \mot.$$
\end{prop}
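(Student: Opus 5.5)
The plan is to derive this from Theorem~\ref{Joyal-Tierney basic} and Lemma~\ref{lemma frobenius 1}, using the two hypotheses to replace finitely generated opens by plain opens. Since $\Lambda^\mathfrak{f}\mot=\mot$, every finite intersection $V_1\cap\dots\cap V_n$ of opens of $\mt$ is a single open $V\in\mot$. Hence condition (2) of Theorem~\ref{Joyal-Tierney basic} reduces to the single-$V$ Frobenius identity in the statement. The extra hypothesis on $\rf[U]$ says that, for opens $U$, the meet-base $\mathcal{B}$ can be replaced by the smaller family $\{\neg V\vee W\mid V,W\in\mot\}$. This is what lets us avoid finite joins of opens.

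For ($\Rightarrow$), assume $f$ is basic open. Define $h[U]=\rf[U]$ for $U\in\mos$; this lands in $\mot$ by basic openness. Since $\rf\dashv\lf$, we have $h[U]\sue V\Leftrightarrow U\sue\lf[V]$ for $V\in\mot$, because $\lf[V]\in\mos$ by (BC4'). So $h$ is left adjoint to $\lf^\of$. For the Frobenius identity, note that $U\cap\lf[V]\in\mos$ because $\lf[V]$ is open and opens are closed under finite intersections. Then Lemma~\ref{lemma frobenius 1} gives $h[U\cap\lf[V]]=\rf[U\cap\lf[V]]=\rf[U]\cap V$.

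For ($\Leftarrow$), let $h$ be the given left adjoint and fix $U\in\mos$. I would show $\rf[U]=h[U]$ by proving that, for all $V,W\in\mot$,
\[
\rf[U]\sue\neg V\vee W \iff h[U]\sue \neg V\vee W.
\]
The left side is equivalent, via Remark~\ref{lema dos complementos}, to $\rf[U]\cap V\sue W$. By Lemma~\ref{lemma frobenius 1} this becomes $\rf[U\cap\lf[V]]\sue W$, and by the adjunction $\rf\dashv\lf$ it becomes $U\cap\lf[V]\sue\lf[W]$. Both $U\cap\lf[V]$ and $\lf[W]$ lie in $\mos$. The adjunction $h\dashv\lf^\of$ then turns this into $h[U\cap\lf[V]]\sue W$. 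The assumed Frobenius identity rewrites it as $h[U]\cap V\sue W$, and Remark~\ref{lema dos complementos} again gives $h[U]\sue\neg V\vee W$.

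By hypothesis $\rf[U]$ is the intersection of all $\neg V\vee W$ containing it, and each of these also contains $h[U]$; hence $h[U]\sue\rf[U]$. Conversely, $h[U]\in\mot$ is itself of the form $\neg\,|\mt|\vee h[U]$ (taking $V=|\mt|$, which is open with $\neg|\mt|=\OS$). Since $h[U]$ contains itself, the equivalence gives $\rf[U]\sue h[U]$. Therefore $\rf[U]=h[U]$ is open, and $f$ is basic open.

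The main obstacle is the step taking $V=|\mt|$: one needs $|\mt|\in\mot$, i.e.\ $\OS\in\mathcal{C}(\mt)$. If that is not guaranteed, I would instead compare $h[U]$ with $\rf[U]$ directly from the adjunctions. From $U\sue\lf[h[U]]$ we get $\rf[U]\sue h[U]$. In the other direction, the reverse inclusion follows from the intersection formula as above. So the inclusion $\rf[U]\sue h[U]$ comes from the unit of $\rf\dashv\lf$ and needs no choice of $V$. This settles the obstacle in any case.
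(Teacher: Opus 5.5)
Your proposal is correct and is essentially the paper's proof. For ($\Rightarrow$) you take $h=\rf$ restricted to $\mos$. For ($\Leftarrow$) the chain $\rf[U]\sue\neg V\vee W\Leftrightarrow h[U]\sue\neg V\vee W$ gives $h[U]\sue\rf[U]$ via the intersection hypothesis, and $\rf[U]\sue h[U]$ comes, exactly as in the paper, from transposing $U\sue\lf[h[U]]$ along $\rf\dashv\lf$ (that inclusion is the unit of $h\dashv\lf^{\of}$, not of $\rf\dashv\lf$ as you wrote).

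Incidentally, your first route with $V=|\mt|$ is also legitimate: $\Lambda^{\mathfrak{f}}$ includes the empty meet, so the hypothesis $\Lambda^{\mathfrak{f}}\mot=\mot$ already gives $|\mt|\in\mot$.
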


\begin{proof}
    $\Rightarrow$: Take $h[U]=\rf[U]$ for all $U\in \mos$.

\noindent
    $\Leftarrow$: Repeating the arguments from the previous theorem, we can conclude that
    $$\rf[U]\sue \neg V\vee W \Leftrightarrow h[U]\sue \neg V\vee W \quad\mbox{for all } V, W\in \mot. $$
    Hence $h[U]\sue \tbigcap \{ \neg V\vee W \mid \rf[U]\sue  \neg V\vee W \}=\rf[U]$.

    For the converse inclusion, observe that from $h\dashv \lf^\of$, $\rf\dashv\lf$ and $h[U]\in \mos$, it follows that
    $$h[U]\sue h[U]\Rightarrow U\sue \lf^\of h[U]\Rightarrow U\sue \lf h[U]\Rightarrow \rf[U]\sue  h[U].\qedhere $$
\end{proof}

\begin{rema}
Theorem \ref{Joyal-Tierney basic} and Proposition \ref{Joyal-Tierney basic simples} are special cases of more general results for distributive lattices proved in \cite{JA}.
\end{rema}

\section{An application: open maps between Heyting semilattices}

Open maps in the category of locales are characterized by the well-known \emph{Joyal-Tierney Theorem} \cite{JT84}:

\begin{theo}
		A localic map $f \colon L \to M$ is open if and only if its adjoint frame homomorphism
		$f^*\colon M\to L$ is a complete Heyting homomorphism \textup(i.e., it also
		preserves arbitrary meets as well as the Heyting operation\textup).
\end{theo}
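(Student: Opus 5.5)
We will obtain the theorem as an instance of Proposition \ref{Joyal-Tierney basic simples}, applied to the basic continuous map $f\colon (L,\cf L,\SL(L))\to(M,\cf M,\SL(M))$ induced by the localic map $f$ (Example after the definition of basic continuous maps). The plan has three steps: check that the hypotheses of that proposition hold in the localic case, translate its conclusion from open sublocales back to elements of $L$ and $M$, and then do a purely order-theoretic computation relating the resulting left adjoint to complete Heyting homomorphisms.

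\emph{Hypotheses.} The open domains of $(L,\cf L,\SL(L))$ are the open sublocales $\of a$. Since $\of a\cap\of b=\of(a\wedge b)$, we have $\Lambda^\mathfrak{f}\mathcal{O}=\mathcal{O}$ on both sides. The image $f[\of a]$ is already a sublocale, so $\rf[\of a]=f[\of a]$. Every sublocale is the intersection of the sets $\cf c\vee\of d=\neg\of c\vee\of d$ that contain it, and this gives exactly the displayed intersection condition of Proposition \ref{Joyal-Tierney basic simples}. By the same observation, $\rf$ on open sublocales is the ordinary image, so ``basic open'' coincides with the usual notion of open localic map; this is the locale case of Note \ref{rema basic open =open}. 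Finally, $\lf[\of b]=f_{-1}[\of b]=\of(f^*(b))$.

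\emph{Translation.} By (2.3.2), $a\mapsto\of a$ is an order isomorphism $L\cong\mathcal{O}(L)$, and likewise for $M$. Under these isomorphisms $\lf^\of$ becomes $f^*\colon M\to L$. Proposition \ref{Joyal-Tierney basic simples} therefore says: $f$ is open if and only if $f^*$ has a left adjoint $f_!\colon L\to M$ satisfying the Frobenius law
\[
f_!(a\wedge f^*(b))=f_!(a)\wedge b\qquad\text{for all } a\in L,\ b\in M .
\]

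\emph{Order-theoretic equivalence.} Since $L$ and $M$ are complete, (G2) and (G3) show that $f^*$ has a left adjoint if and only if it preserves arbitrary meets. It remains to prove that, given the adjunction $f_!\dashv f^*$, Frobenius is equivalent to $f^*(b\to c)=f^*(b)\to f^*(c)$.

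Assume Frobenius. For any $x\in L$ we have the chain
\[
x\le f^*(b\to c)\iff f_!x\le b\to c\iff f_!x\wedge b\le c\iff f_!(x\wedge f^*b)\le c\iff x\wedge f^*b\le f^*c ,
\]
and the last condition says $x\le f^*b\to f^*c$. Hence $f^*$ preserves $\to$.

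Conversely, assume $f^*$ preserves $\to$. For any $c\in M$,
\[
f_!(a\wedge f^*b)\le c\iff a\wedge f^*b\le f^*c\iff a\le f^*(b\to c)\iff f_!a\le b\to c\iff f_!a\wedge b\le c .
\]
By Yoneda this gives the Frobenius identity.

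The main obstacle is the first step, checking the hypotheses. In particular one must confirm that $f[\of a]$ is a sublocale, so that $\rf$ agrees with the set image, and that the zero-dimensionality representation of sublocales supplies the required intersection formula. Once these are in place, the rest is routine adjunction calculus.
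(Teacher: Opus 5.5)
Your argument is correct and takes essentially the paper's route. The paper only cites this theorem, but recovers it as the complete case of Theorem~\ref{JTtheo}: that proof likewise applies Proposition~\ref{Joyal-Tierney basic simples}, transports $\lf^{\of}$ to $f^*$ along $a\mapsto\of a$, and then invokes Lemma~\ref{lema adj jt}. The differences are minor: you prove that lemma directly by adjunction calculus, you get the intersection hypothesis from the zero-dimensionality of $\SL(L)$ rather than from Proposition~\ref{prop nuc em bd}, and you add the (G2)/(G3) step turning ``$f^*$ is a right adjoint'' into ``$f^*$ preserves arbitrary meets'', which is exactly what the complete case requires.
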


This result was extended to the non-complete case of Heyting semilattices in \cite{EPP}.
The main goal of this section is to demonstrate how Proposition \ref{Joyal-Tierney basic simples} can be used to obtain a new proof of the Joyal-Tierney Theorem for Heyting semilattices.

We start by recalling the following lemma from \cite{EPP}.

\begin{lemma}\label{lema adj jt}
	Let $f\colon H_1\to H_2$, $f^*\colon H_2\to H_1$ and $f_!\colon H_1\to H_2$  be maps between Heyting semilattices such that $f_!\dashv f^*\dashv f$. The following are equivalent:
	\begin{enumerate}[{\em (i)}]
		\item $f_!(a \wedge  f^\ast(b))=f_!(a)\wedge b$ for every $a\in H_1$ and every $b\in H_2$.
		\item $f^*(b\to c)=f^*(b)\to f^*(c)$ for every $b, c\in H_2$.
	\end{enumerate}
\end{lemma}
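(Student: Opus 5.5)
Both directions will be proved by direct computation with the two adjunctions $f_!\dashv f^*$ and $f^*\dashv f$ and the Heyting adjunction \eqref{Heyting}. The guiding fact is that $f^*$ preserves binary meets, being a left adjoint's right adjoint, i.e.\ a right adjoint of $f_!$. Since $f^*$ is also a left adjoint (of $f$), it is monotone.

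\emph{(i)$\Rightarrow$(ii).} One inequality is automatic. From $f^*(b\to c)\wedge f^*(b)=f^*((b\to c)\wedge b)\le f^*(c)$ we get $f^*(b\to c)\le f^*(b)\to f^*(c)$ by \eqref{Heyting}. For the reverse inequality, put $a=f^*(b)\to f^*(c)$. By $f_!\dashv f^*$ it suffices to show $f_!(a)\le b\to c$, i.e.\ $f_!(a)\wedge b\le c$. By (i),
\[
f_!(a)\wedge b=f_!(a\wedge f^*(b)),
\]
and $a\wedge f^*(b)\le f^*(c)$ by (H5)/modus ponens. Hence $f_!(a\wedge f^*(b))\le f_!f^*(c)\le c$ by (G1).

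\emph{(ii)$\Rightarrow$(i).} Again one inequality is free. We have $f_!(a\wedge f^*(b))\le f_!(a)$ by monotonicity, and $f_!(a\wedge f^*(b))\le f_!f^*(b)\le b$ by (G1). For the reverse, it suffices to show that for every $c\in H_2$ with $f_!(a\wedge f^*(b))\le c$ we get $f_!(a)\wedge b\le c$. Taking $c=f_!(a\wedge f^*(b))$ then yields the equality. Adjointness gives $a\wedge f^*(b)\le f^*(c)$, hence $a\le f^*(b)\to f^*(c)=f^*(b\to c)$ by (ii). Adjointness again gives $f_!(a)\le b\to c$, i.e.\ $f_!(a)\wedge b\le c$.

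The only point needing care is the meet-preservation of $f^*$ used in the first step; it follows from (G2) since $f^*$ is right adjoint to $f_!$. Everything else is a routine chase of adjunctions, so I expect no real obstacle. The argument is the standard proof that the Frobenius reciprocity condition is equivalent to preservation of implication.
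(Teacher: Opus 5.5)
Your proof is correct and complete. Each inequality follows from a legitimate chase through $f_!\dashv f^*$, the counit inequality $f_!f^*\le\mathrm{id}$ from (G1), binary-meet preservation of $f^*$ from (G2), and the Heyting adjunction \eqref{Heyting}.

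The paper does not prove this lemma itself; it only quotes it from \cite{EPP}, so there is no in-paper argument to compare against. Your adjunction chase is the standard Frobenius-reciprocity computation.

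Your argument never uses the adjunction $f^*\dashv f$: monotonicity of $f^*$ already follows from $f_!\dashv f^*$. So you have in fact proved the equivalence for any adjoint pair $f_!\dashv f^*$ between Heyting semilattices.
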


\begin{theo}\label{JTtheo}
	The following are equivalent
	for a continuous map $f\colon H_1 \to H_2$ between Heyting semilattices:
	\begin{enumerate}[\em(i)]
		\item $f$ is open.
		\item $f$ is basic open.
		\item The map $ \lf^\of =(\of b\mapsto \lf[\of b] ) \colon \of H_2 \to  \of H_1 $
		has a left adjoint $$h\colon\of H_1\to \of H_2$$ such that
		$h[\of a\cap \lf[\of b]]=h[\of a]\cap \of b$ for every $a\in H_1$ and $b\in H_2$.
		\item $f^*$ has a left adjoint
		$f_{!}\colon H_1\to H_2$ such that
		$
		f_!(a\wedge f^*(b))=f_!(a)\wedge b$ for every $a\in H_1$ and $b\in H_2$.
		\item $f^*$ is a right adjoint such that
		$f^\ast (b\to c)=f^\ast (b)\to f^\ast (c)$
		 for every $b,c\in H_2$.
	\end{enumerate}
\end{theo}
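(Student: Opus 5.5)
The proof will run through the chain (i)$\Leftrightarrow$(ii)$\Leftrightarrow$(iii)$\Leftrightarrow$(iv)$\Leftrightarrow$(v).

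(i)$\Leftrightarrow$(ii) is the second item of Note~\ref{rema basic open =open}. For a continuous $f$, Propositions~\ref{abertos sao nuc} and~\ref{cont preservam nuc} give $f[\of a]\in\mathcal{N}uc(H_2)\sue\mathcal{B}d(H_2)$ by Proposition~\ref{prop nuc em bd}. Hence $\rf[\of a]=f[\of a]$, and openness and basic openness coincide. (iv)$\Leftrightarrow$(v) is exactly Lemma~\ref{lema adj jt}, once we note that ``$f^*$ is a right adjoint'' means that $f_!$ exists.

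For (ii)$\Leftrightarrow$(iii) we apply Proposition~\ref{Joyal-Tierney basic simples} to $\ms=(H_1,\cf H_1,\mathcal{B}d(H_1))$ and $\mt=(H_2,\cf H_2,\mathcal{B}d(H_2))$; $f$ is basic continuous by Proposition~\ref{continos sao basic}. The hypothesis $\Lambda^\mathfrak{f}\mos=\mos$ holds because $\of a\cap\of b=\of(a\wedge b)$ in any Heyting semilattice. By (H1), $\of a\cap\of b$ consists of the elements $x$ with $a\to x=x=b\to x$, which is equivalent to $(a\wedge b)\to x=x$ via (H4).

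The remaining hypothesis says that each $\rf[\of a]$ is the meet of the basic sets $\neg V\vee W=\cf b\vee\of c$ containing it. Here $\rf[\of a]=f[\of a]$ is a nuclear range, and Proposition~\ref{prop nuc em bd} already writes every nuclear range as $\tbigcap\{\cf b\vee\of c\}$. This is the required form, so Proposition~\ref{Joyal-Tierney basic simples} yields (ii)$\Leftrightarrow$(iii).

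For (iii)$\Leftrightarrow$(iv) we translate along the order isomorphisms $a\mapsto\of a$ of (2.3.2). By Corollary~\ref{coro pre imagem par hey}, $\lf[\of b]=\of(f^*(b))$, so $\lf^\of$ corresponds to $f^*$. A left adjoint $h$ of $\lf^\of$ corresponds to a left adjoint $f_!$ of $f^*$ via $h[\of a]=\of(f_!(a))$, and conversely $f_!$ defines $h$ in the same way. Using $\of a\cap\of(f^*b)=\of(a\wedge f^*b)$, the Frobenius condition in (iii) becomes $\of(f_!(a\wedge f^*b))=\of(f_!(a)\wedge b)$. This is equivalent to (iv) by injectivity of $a\mapsto\of a$.

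The main obstacle is the hypothesis check in (ii)$\Leftrightarrow$(iii), namely that $\rf[\of a]$ is cut out by basic sets $\cf b\vee\of c$. That step relies on $f[\of a]$ being a nuclear range, which uses continuity of $f$ and not just basic continuity. I would also double-check that the left adjoint in (iii) is automatically of the form $\of a\mapsto\of(\cdot)$; this holds because its codomain is $\of H_2$.
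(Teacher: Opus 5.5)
Your proposal is correct and follows the paper's proof essentially step for step: it uses the same chain of equivalences, gets (ii)$\Leftrightarrow$(iii) from Proposition~\ref{Joyal-Tierney basic simples} via the nuclear-range representation of $f[\of a]$ from Proposition~\ref{prop nuc em bd}, and gets (iii)$\Leftrightarrow$(iv) by transporting along the order isomorphism $a\mapsto\of a$ using $\lf[\of b]=\of(f^*(b))$. The only difference is that you justify $\Lambda^\mathfrak{f}\of H=\of H$, which the paper merely asserts; note that the reverse implication there, from $(a\wedge b)\to x=x$ to $a\to x=x$, also needs (H3) and the monotonicity of $a\to(-)$, not (H4) alone.
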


\begin{proof}
	(i)$\Leftrightarrow$(ii): Follows immediately from Remark \ref{rema basic open =open}.
	
	\smallskip
	\noindent
	(ii)$\Leftrightarrow$(iii): Let $a\in H_1$. We know that $f[\of a]\in \mathcal{N}uc(H_2)$. Hence, by Proposition \ref{prop nuc em bd},
	$$\rf [\of a]=\tbigcap\{\rf [\of a]\sue \cf x \vee \of y \mid x, y \in H_2\}.$$
    Further, in any Heyting semilattice $H$, $\Lambda^\mathfrak{f}\of (H)=\of H$.
	Therefore, we can apply Proposition \ref{Joyal-Tierney basic simples} and the result follows.

	\noindent
	(iii)$\Leftrightarrow$(iv):  Since $f$ is  continuous,  $\lf^\of(\of b)=\of (f^\ast (b))$. Moreover, since
\[
\of = (a \mapsto \of a)\colon H  \to \of H
\]
	is an isomorphism  for every Heyting semilattice $H$,
	there exists a one-to-one correspondence between maps $h\colon\of H_1\to \of  H_2$ and maps $f_!\colon H_1\to  H_2$ such that for each $h$ the associated  $f_!$ makes the following diagram to commute:
\[
\xymatrix@R=30pt@C=40pt{\of H_1  \ar[r]^{h} & \of  H_2 \ar[r]^(.55){\lf^\of} & \of H_1   \\
H_1 \ar[u]^{\of} \ar[r]^{f_!} & H_2 \ar[u]^{\of} \ar[r]^(.5){f^*} &  H_1. \ar[u]_{\of}}
\]
	Now, we use the fact that $\of\colon H\to \of H$ is an isomorphism and the commutativity of the previous diagram to deduce that $\lf^\of$ has a left adjoint $h\colon\of H_1\to \of  H_2$  if and only if $f^\ast\colon H_1\to  H_2$  has a left adjoint $f_!\colon H_1\to  H_2$:
\[
\xymatrix@R=30pt@C=30pt{h    & \dashv & \lf^\of    \\
f_! \ar[u]^{\of}   & \dashv     &  f^*. \ar[u]_{\of}}
\]
	Moreover, by the commutativity of the diagrams,
	$$\of  (f_!(a\wedge f^\ast(b))=h[\of a \cap \lf^\of[\of b]] \qtq{and} \of (f_!(a)\wedge b)=h[\of a]\cap \of b.$$
	Hence
	$$f_!(a\wedge f^\ast(b))=f_!(a)\wedge b \Leftrightarrow h[\of a \cap \lf^\of[\of b]] =h[\of a]\cap \of b.$$

	Finally, the equivalence
	(iv)$\Leftrightarrow$(v) follows from Lemma \ref{lema adj jt}.
    \end{proof}

\section{Basic closed maps and a duality principle for b0-spaces}\label{dual}

	We say that a basic continuous map $f\colon \mathcal{S}\to \mathcal{T}$ between b0-spaces is \emph{basic closed} if $\rf[B]\in \mathcal{C}(\mathcal{T})\mbox{ for every }B\in \mathcal{C}(\mathcal{S}).$

\begin{rema}\label{rema basic closed=closed}
	As with basic open maps, it is $\rf[C]$ that we require to be closed, not $f[C]$. This implies that the definition of a basic closed map may not coincide with that of a closed map in some subcategories of $\Bds$. Again, they do coincide in
$T_D$-closure spaces, as well as in
		  Heyting {\bf lattices} and continuous maps.
Indeed, let
 $f\colon H_1 \to H_2$ be a continuous map between Heyting lattices. By propositions  \ref{abertos sao nuc} and \ref{cont preservam nuc}, $f[\cf a] \in \mathcal{N}uc(H_2)\sue \mathcal{B}d(H_2)$. Hence
		$\rf[\cf a]=f[\cf a]$ and basic closed maps are precisely the closed maps.

This is not necessarily true when $H$ is not a Heyting \emph{lattice} because in Heyting semilattices  $\cf a$ is not necessarily a nuclear range.
\end{rema}

Our aim is now to demonstrate how Theorem \ref{Joyal-Tierney basic} can be used to derive a characterization of closed maps. To achieve this, we need to introduce the notion of a dual space.

Giving a b0-space $\mathcal{S}=(X,\mathcal{C}( \mathcal{S}),  \mathcal{B}d (\mathcal{S}))$, let  $$\widetilde{\mathcal{S}}=(X, \mathcal{O}( \mathcal{S}),  \mathcal{B}d (\mathcal{S})),$$ which we call the {\em dual of $\mathcal{S}$}, be the b0-space whose closed domains  are precisely the open domains of $\mathcal{S}$.

Given a morphism $f\colon\mathcal{S}\to \mathcal{T}$ between b0-spaces, we denote by $|f|$ the map between their underlying sets. Clearly, $f\colon\mathcal{S}\to \mathcal{T}$ is totally determined by $|f|$. So we can define $\widetilde f\colon\widetilde{\mathcal{S}}\to \widetilde{\mathcal{T}}$ such that $|\widetilde f|=|f|$.

%It is an easy exercise to check that:
\begin{lemma}
	Let $(f, \lf)$ be a basic continuous pair between b0-spaces $\mathcal{S}$ and $\mathcal{T}$, and let
	\[
			\widetilde f_\leftarrow=(K \mapsto \lf [K]) \colon   \mathcal{B}d(\widetilde{\mathcal{T}})  \to  \mathcal{B}d(\widetilde{\mathcal{S}}).
	\]
	Then $(\widetilde f, \widetilde f_\leftarrow)$ is a basic continuous pair between $\widetilde{\mathcal{S}}$ and $\widetilde{\mathcal{T}}$.
\end{lemma}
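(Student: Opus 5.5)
The plan is to apply the Extension Theorem (Theorem~\ref{teo ext}) to the map $|\widetilde f|=|f|$ between $\widetilde{\mathcal S}$ and $\widetilde{\mathcal T}$, with $\widetilde f_\leftarrow=\lf$ as the candidate for the map in (BC2'). The point to exploit is that dualizing changes only the middle component of the triple: $\mathcal{B}d(\widetilde{\mathcal S})=\mathcal{B}d(\mathcal S)$ and $\mathcal{B}d(\widetilde{\mathcal T})=\mathcal{B}d(\mathcal T)$. So $\widetilde f_\leftarrow$ is literally the same map $\lf$ on the same coframes, and only conditions (BC3') and (BC4') change meaning.

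First I would check that $\widetilde{\mathcal S}$ is indeed a b0-space. Each $\neg C\in\mathcal O(\mathcal S)$ is complemented in $\mathcal{B}d(\mathcal S)$, with complement $C$. Each basic generator can be rewritten as
\[C\vee\neg D=\neg D\vee\neg(\neg C),\]
where $\neg D$ and $\neg C$ lie in $\mathcal O(\mathcal S)$. Hence every basic domain has the required form with respect to $\mathcal O(\mathcal S)$, and the same holds for $\widetilde{\mathcal T}$.

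Next I would run through (BC1')–(BC4') for $\widetilde f$.
\begin{itemize}
\item[(BC1')] $\pf[\OS]=\OS$ is unchanged, since the bottom element of $\mathcal{B}d$ is the same.
\item[(BC2')] The adjunction $f[M]\sue K\Leftrightarrow M\sue\lf[K]$ for $M\in\mathcal{B}d(\mathcal S)$ and $K\in\mathcal{B}d(\mathcal T)$ is exactly (BC2') for $f$, because the lattices coincide.
\item[(BC3')] The closed domains of $\widetilde{\mathcal T}$ are the sets $V=\neg C$ with $C\in\mathcal C(\mathcal T)$. By (BC4') for $f$, $\lf[\neg C]=\neg\lf[C]$, and $\lf[C]\in\mathcal C(\mathcal S)$ by (BC3'). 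Hence $\lf[V]\in\mathcal O(\mathcal S)=\mathcal C(\widetilde{\mathcal S})$.
\item[(BC4')] For $V=\neg C$, the complement in $\mathcal{B}d(\mathcal T)$ is $\neg V=C$. Using uniqueness of complements in the distributive lattice $\mathcal{B}d(\mathcal S)$ together with (BC4') for $f$,
\[\lf[\neg V]=\lf[C]=\neg\neg\lf[C]=\neg\lf[\neg C]=\neg\lf[V].\]
\end{itemize}
Theorem~\ref{teo ext} then gives that $\widetilde f$ is basic continuous, and that $\widetilde f_\leftarrow$ is its basic preimage. That is precisely the assertion that $(\widetilde f,\widetilde f_\leftarrow)$ is a basic continuous pair.

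I do not expect a genuine obstacle. The step needing the most care is (BC4'), where one must note that complementation is taken in the same lattice $\mathcal{B}d$ for both the space and its dual. Since complements are unique in a distributive lattice, $\neg\neg$ is the identity on complemented elements, and the identity $\lf[\neg C]=\neg\lf[C]$ can be applied in either direction.
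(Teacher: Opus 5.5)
Your proposal is correct and follows the paper's proof. Like the paper, you verify (BC1')--(BC4') of the Extension Theorem for $\widetilde f$ with $\widetilde f_\leftarrow=\lf$: since $\mathcal{B}d$ is unchanged, (BC1') and (BC2') are immediate, and $\lf[\neg C]=\neg\lf[C]$ together with uniqueness of complements gives (BC3') and (BC4'). You also check that $\widetilde{\mathcal S}$ is a b0-space via $C\vee\neg D=\neg D\vee\neg(\neg C)$, which the paper takes for granted. Finally, you correctly take the closed domains from $\mathcal{C}(\widetilde{\mathcal T})$, where the paper's text writes $\mathcal{C}(\widetilde{\mathcal S})$.
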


\begin{proof}
	It is clear that $(\widetilde f,\widetilde f_\leftarrow)$ satisfies (BC1') and (BC2').
	
	Let $C\in \mathcal{C}(\widetilde{\mathcal{S}})$. Then $C$ is an open of $\mathcal S$ and as so there exists a $D\in \mathcal C (\mathcal{S})$ such that $C=\neg D$. Now observe that $\widetilde f_\leftarrow[C]=\lf[\neg D]=\neg \lf [D ]\in \mathcal{C}(\widetilde{\mathcal{S}})$, hence $(\widetilde f, \widetilde f_\leftarrow)$ enjoys property (BC3'). Moreover $$\neg \widetilde f_\leftarrow[C]=\neg \lf[\neg D]=\lf [D ]=\widetilde f_\leftarrow[\neg C],$$ so  $(\widetilde f,\widetilde f_\leftarrow)$ also satisfies (BC4').
\end{proof}

It is straightforward to check that the assignments $S\mapsto \widetilde S$ and $f\mapsto \widetilde f$ establish a (self-inverse) functor
\[
\mathcal D \colon  \Bds  \longrightarrow \Bds.
\]
%It will allow us to get for free a counterpart of Theorem \ref{Joyal-Tierney basic} for basic closed maps.
%Before presenting the result for basic closed maps, we introduce the following notation.
Now, given a b0-space $\mathcal{S}=(X, \mathcal{C}, \mathcal{B}d)$, set
\[\mathcal{C}^\mathfrak{f}(\mathcal{S}) =\rotatebox[origin=c]{180}{$\Lambda$}^\mathfrak{f}\Lambda^\mathfrak{f}\mathcal{C}(\mathcal{S}).\] Its elements are called {\em finitely generated closed domains}.
Further, given a basic continuous pair $(f,\lf)$ between b0-spaces $\mathcal{S}$ and $\mathcal{T}$, we will refer to the map
\[
		\lf^{\,\mathfrak{f} \cf} =( V\mapsto \lf[V]) \colon \mathcal{C}^\mathfrak{f}(\mathcal{T})  \to \mathcal{C}^\mathfrak{f}(\mathcal{S}),		
\]
as the {\em finite closed restriction of $\lf$}. (This map is well defined since the basic preimage of a closed domain is still a closed domain and $\lf$ preserves finite joins and finite meets.)

%Applying the preceding results to Theorem \ref{Joyal-Tierney basic}, we get immediately the following:

\begin{theo}\label{closedJT}
Let $\ms$ be a b0-space such that $\Lambda^\mathfrak{f}\mcs=\mcs$  and let $\mt$ be an arbitrary b0-space. Let  $(f,\lf)$ be a basic continuous pair between $\mathcal{S}$ and $\mathcal{T}$. Then $f$ is a basic closed map if and only if $\lf^{\,\mathfrak{f} \cf}$ has a left adjoint $h\colon \mathcal{C}^\mathfrak{f}(\mathcal{S})\to \mathcal{C}^\mathfrak{f}(\mathcal{T})$ with the following properties:
	\begin{enumerate}[\em (1)]
		\item For every $C\in \mathcal{C}(\mathcal{S})$, $h[C]\in \mathcal{C}(\mathcal{T})$.
		\item  For every $C\in \mathcal{C}(\mathcal{S})$ and every $D_1, \ldots , D_n\in \mathcal{C}(\mathcal{T})$
		\begin{equation*}
			h[C\cap \lf^{\,\mathfrak{f} \cf}[D_1 \cap\dots \cap D_n]]=h[C]\cap D_1 \cap \dots \cap D_n.
		\end{equation*}
	\end{enumerate}
\end{theo}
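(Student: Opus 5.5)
The plan is to obtain Theorem \ref{closedJT} from Theorem \ref{Joyal-Tierney basic} by applying the latter to $\widetilde f\colon\widetilde{\mathcal S}\to\widetilde{\mathcal T}$. The argument rests on a dictionary between the two situations.

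First, since $\mathcal{B}d(\widetilde{\mathcal S})=\mathcal{B}d(\mathcal S)$ as closure systems on the same set and $|\widetilde f|=|f|$, the basic image is unchanged: $\widetilde f_\rightarrow=\rf$. By the preceding Lemma, the basic preimage is $\widetilde f_\leftarrow=\lf$. Here one should check that the closure system, and hence $\langle-\rangle$, is the same, which is immediate from the definition of the dual space.

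Second, the open domains of $\widetilde{\mathcal S}$ are the closed domains of $\mathcal S$. Indeed, $\mathcal O(\widetilde{\mathcal S})=\{\neg U\mid U\in\mathcal O(\mathcal S)\}=\{\neg\neg C\mid C\in\mathcal C(\mathcal S)\}=\mathcal C(\mathcal S)$, since complements in the distributive lattice $\mathcal{B}d$ are unique and involutive. It follows that $\mathcal O^\mathfrak f(\widetilde{\mathcal S})=\mathcal C^\mathfrak f(\mathcal S)$, and similarly for $\mathcal T$. Consequently the hypothesis $\Lambda^\mathfrak f\mathcal C(\mathcal S)=\mathcal C(\mathcal S)$ is exactly the hypothesis $\Lambda^\mathfrak f\mathcal O(\widetilde{\mathcal S})=\mathcal O(\widetilde{\mathcal S})$ required in Theorem \ref{Joyal-Tierney basic}. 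Likewise, the finite open restriction $\widetilde f_\leftarrow^{\,\mathfrak f\of}$ coincides with $\lf^{\,\mathfrak f\cf}$, being the same assignment $V\mapsto\lf[V]$ on the same domain and codomain.

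Third, $f$ is basic closed if and only if $\widetilde f$ is basic open. Indeed, $\widetilde f$ is basic open when $\widetilde f_\rightarrow[U]=\rf[U]\in\mathcal O(\widetilde{\mathcal T})=\mathcal C(\mathcal T)$ for every $U\in\mathcal O(\widetilde{\mathcal S})=\mathcal C(\mathcal S)$, which is the definition of $f$ being basic closed.

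With this dictionary in place, applying Theorem \ref{Joyal-Tierney basic} to the basic continuous pair $(\widetilde f,\widetilde f_\leftarrow)$ yields exactly the stated equivalence. Condition (1) of that theorem translates into $h[C]\in\mathcal C(\mathcal T)$ for $C\in\mathcal C(\mathcal S)$. Condition (2) translates into the stated Frobenius identity, with $\lf[D_1\cap\dots\cap D_n]=\lf^{\,\mathfrak f\cf}[D_1\cap\dots\cap D_n]$ because the intersection $D_1\cap\dots\cap D_n$ lies in $\mathcal C^\mathfrak f(\mathcal T)$. The only step requiring real care is the identification $\mathcal O(\widetilde{\mathcal S})=\mathcal C(\mathcal S)$ and its consequences for the finitely generated parts; everything else is a matter of rewriting.
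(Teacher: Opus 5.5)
Your proposal is correct and is essentially the paper's own proof. Like the paper, you apply Theorem~\ref{Joyal-Tierney basic} to the dual map $\widetilde f\colon\widetilde{\mathcal S}\to\widetilde{\mathcal T}$, using $\mathcal O(\widetilde{\mathcal S})=\mathcal C(\mathcal S)$, $\mathcal O(\widetilde{\mathcal T})=\mathcal C(\mathcal T)$, $\widetilde f_\leftarrow^{\,\mathfrak{f}\of}=\lf^{\,\mathfrak{f}\cf}$, and the fact that $f$ is basic closed iff $\widetilde f$ is basic open; your extra checks (involutivity of complements, and $\widetilde f_\rightarrow=\rf$ because the closure system is unchanged) only spell out what the paper calls clear.
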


\begin{proof}
	Let $f\colon \mathcal{S}\to \mathcal{T}$ be a basic continuous map. It is clear that $f$ is a basic closed map if and only if $\widetilde{f}$ is a basic open map. Since $\Lambda^\mathfrak{f}\mo(\widetilde{\ms})=\Lambda^\mathfrak{f}\mcs=\mcs=\mo(\widetilde{\ms})$, we can apply  Theorem \ref{Joyal-Tierney basic} to conclude that $\widetilde{f}$ is basic open if and only if $\widetilde{f}_\leftarrow^{\, \mathfrak{f}\of}$ has a left adjoint $h\colon \mathcal{O}^\mathfrak{f}(\widetilde{\mathcal{S}})\to \mathcal{O}^\mathfrak{f}(\widetilde{\mathcal{T}})$ satisfying the following conditions:
	\begin{enumerate}[(1)]
		\item  For every $U\in \mathcal{O}(\mathcal{\widetilde{S}})$,\ $h[U]\in \mathcal{O}(\widetilde{\mathcal{T}})$.
		\item    For every $U\in \mathcal{O}(\widetilde{\mathcal{S}})$ and every $V_1, \ldots , V_n \in \mathcal{O}(\widetilde{\mathcal{T}})$
		\begin{equation*}
			h[U\cap \widetilde{f}_\leftarrow^{\, \mathfrak{f}\of}[V_1 \cap \dots\cap V_n]]=h[U]\cap V_1 \cap \dots \cap V_n .
		\end{equation*}
	\end{enumerate}
	Then the result follows immediately from the facts that $\widetilde{f}_\leftarrow^{\, \mathfrak{f}\of}=\lf^{\,\mathfrak{f} \cf}$, $\mathcal{O}(\widetilde{\mathcal{S}})=\mathcal{C}(\mathcal{S})$ and $\mathcal{O}(\widetilde{\mathcal{T}})=\mathcal{C}(\mathcal{T})$.
\end{proof}

This proof illustrates how the isomorphism $\mathcal{D}$  can be used to ``dualize'' results in the category of b0-spaces. It is now obvious that this category  has the following ``duality principle'':

\begin{prop}[Duality principle]
	Let $\Sigma$ be a statement in the category of basic zero-dimensional spaces. If $\Sigma$ is true, then its ``dual statement'' $\mathcal D(\Sigma)$ is also true. This dual statement is obtained by interpreting $\Sigma$ under $\mathcal D$ \textup(swapping ``closed'' and ``open'', $\lf^{\,\mathfrak{f} \cf}$ and $\lf^{\,\mathfrak{f} \of}$, etc.\textup).
\end{prop}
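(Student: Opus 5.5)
The plan is the standard transport-of-structure argument through the self-inverse functor $\mathcal D\colon\Bds\to\Bds$ of the preceding lemma, exactly as it was used in the proof of Theorem \ref{closedJT}. First I will check that $\mathcal D$ really is a functor and an involution. On objects, $\widetilde{\widetilde{\mathcal S}}=\mathcal S$: the complements in $\mathcal Bd(\mathcal S)$ of the elements of $\mathcal O(\mathcal S)$ are exactly the elements of $\mathcal C(\mathcal S)$, and the basis $\{B\vee\neg C\}$ is unchanged when $\mathcal C$ and $\mathcal O$ are swapped. Hence $\widetilde{\mathcal S}$ is a b0-space with the same $\mathcal Bd$. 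On morphisms, $|\widetilde f|=|f|$, so identities and composites are preserved trivially. The preceding lemma shows that $\widetilde f$ is basic continuous, and $\widetilde{\widetilde f}=f$. Thus $\mathcal D$ is an isomorphism of categories with $\mathcal D\circ\mathcal D=\mathrm{Id}$.

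Second, I will make precise what a ``statement'' $\Sigma$ is. It is a formula in the language of b0-spaces and basic continuous maps, built from the primitive data
\[
X,\ \mathcal C,\ \mathcal O,\ \mathcal Bd,\ \neg,\ f,\ \lf,\ \rf .
\]
Derived notions such as $\mathcal C^{\mathfrak f}$, $\mathcal O^{\mathfrak f}$, $\lf^{\,\mathfrak f\cf}$, $\lf^{\,\mathfrak f\of}$, basic open and basic closed are all defined from these. The dual $\mathcal D(\Sigma)$ swaps $\mathcal C$ with $\mathcal O$ everywhere and leaves everything else fixed. The key invariance facts are the following. The set $X$, the closure system $\mathcal Bd$, the operator $\langle-\rangle$, the map $\rf$, the complement $\neg$ in $\mathcal Bd$, and the basic preimage are unchanged under $\mathcal D$. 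For the basic preimage, $\widetilde f_\leftarrow=\lf$ by the lemma, since both are the right adjoint of the same $\rf$. Only the roles of $\mathcal C$ and $\mathcal O$ are exchanged: $\mathcal C(\widetilde{\mathcal S})=\mathcal O(\mathcal S)$ and $\mathcal O(\widetilde{\mathcal S})=\mathcal C(\mathcal S)$. It follows by induction on the structure of $\Sigma$ that $\Sigma$ holds for a family of objects and maps iff $\mathcal D(\Sigma)$ holds for their images under $\mathcal D$. Consequently, $\widetilde f$ is basic open iff $f$ is basic closed, and $\widetilde f_\leftarrow^{\,\mathfrak f\of}=\lf^{\,\mathfrak f\cf}$.

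Finally I conclude. Suppose $\Sigma$ is true, i.e.\ it holds for all b0-spaces and maps. Given arbitrary data $\mathcal S,\mathcal T,f,\dots$, apply $\Sigma$ to $\widetilde{\mathcal S},\widetilde{\mathcal T},\widetilde f,\dots$. Translating back by the invariance facts and using $\mathcal D^2=\mathrm{Id}$ shows that $\mathcal D(\Sigma)$ holds for the original data. The main obstacle is not mathematical depth but making the notion of ``statement'' precise enough for the induction to make sense. I would restrict to statements expressible in terms of the above primitives, noting that any such statement is invariant under isomorphisms of categories that preserve underlying sets, $\mathcal Bd$ and $\lf$ while swapping $\mathcal C$ and $\mathcal O$. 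I would then treat the proof of Theorem \ref{closedJT} as the model instance.
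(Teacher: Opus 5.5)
Your proposal is correct and takes the paper's own route. The paper obtains the principle by transporting statements along the self-inverse isomorphism $\mathcal D\colon\Bds\to\Bds$, which fixes the underlying sets, $\mathcal{B}d$, $\rf$ and $\lf$ while exchanging $\mathcal C$ and $\mathcal O$; you simply make explicit what the paper calls ``obvious'', namely the check that $\widetilde{\widetilde{\mathcal S}}=\mathcal S$ and a formal notion of ``statement'' for the induction, with the proof of Theorem \ref{closedJT} as the model instance.
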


\begin{note}\label{rema dual}
	Observe that if we had required a basic continuous map $f\colon \mathcal{S}\to \mathcal{T}$ to satisfy $\pf[C]\in \mathcal{C}(\mathcal{S})$ for every $C\in \mathcal{C}(\mathcal{T})$, we would not get such a duality principle. In fact, it is possible for a basic continuous map $f$ to satisfy  $\pf[C]\in \mathcal{C}(\mathcal{S})$ for every $C\in \mathcal{C}(\mathcal{T})$, with $\pf[V]\not \in \mathcal{O}(S)$ for some $V\in \mathcal{O}(\mathcal{T})$.

This is what happens, in general, in the category of locales and localic maps that prevents us from having such a duality principle in point-free topology and, in particular, from obtaining the well-known characterization of closed maps as a corollary of the Joyal-Tierney Theorem.
\end{note}

\section{An application: Closed maps between Heyting lattices}

We finish with the generalization to Heyting lattices of a well-known characterisation of closed localic maps (\cite[Prop. III.7.3]{PP}), as an application of Theorem \ref{closedJT}.

\begin{theo}\label{closedtheo}
	The following are equivalent for a continuous map $f\colon H_1\to H_2$  between Heyting lattices:
	\begin{enumerate}[\em(i)]
		\item  $f$ is a closed map.
		\item $f$ is a basic closed map.
		\item  The basic $\cf$-preimage $f^\cf_{\leftarrow}$ has a left adjoint $h\colon \cf H_1\to \cf H_2$ such that
		$$
		h[\cf a\cap \lf[\cf b]]=h[\cf a]\cap \cf b \quad
		\mbox{for every }a\in H_1\mbox{ and }b\in H_2.$$
		\item For every $a\in H_1$ and $b\in H_2$, $f(a\vee f^\ast (b))=f(a)\vee b$.
		\item For every $a\in H_1$, $f[\cf a]= \cf (f(a))$.
		
	\end{enumerate}
\end{theo}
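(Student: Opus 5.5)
The proof will follow the pattern of Theorem \ref{JTtheo}, with the duality principle doing the work that the Joyal--Tierney argument did there. I will establish (i)$\Leftrightarrow$(ii), (ii)$\Leftrightarrow$(iii), (iii)$\Leftrightarrow$(iv) and (iv)$\Leftrightarrow$(v).

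\textbf{(i)$\Leftrightarrow$(ii).} This is exactly Remark \ref{rema basic closed=closed}. In a Heyting lattice, $f[\cf a]$ is a nuclear range by Propositions \ref{abertos sao nuc} and \ref{cont preservam nuc}. Hence it is a basic domain by Proposition \ref{prop nuc em bd}, so $\rf[\cf a]=f[\cf a]$.

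\textbf{(ii)$\Leftrightarrow$(iii).} I will not use Theorem \ref{closedJT} directly. Instead I will use the closed analogue of Proposition \ref{Joyal-Tierney basic simples}, obtained by applying the duality functor $\mathcal D$ to $\widetilde f$. Its hypotheses must be checked.
\begin{itemize}
\item In a Heyting lattice, $\cf a\cap\cf b=\cf(a\vee b)$, so $\Lambda^\mathfrak{f}\cf H_i=\cf H_i$.
\item Since $f[\cf a]$ is a nuclear range, Proposition \ref{prop nuc em bd} gives $\rf[\cf a]=\bigcap\{\cf x\vee\of y\mid \rf[\cf a]\sue \cf x\vee\of y\}$. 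Under the dual space (closed and open swapped), this is precisely the hypothesis form $\neg V\vee W$ with $V,W$ closed domains of $\widetilde{\mathcal T}$.
\end{itemize}
Then $f$ is basic closed iff $\widetilde f$ is basic open, iff $\lf$ restricted to $\cf H_2$ has a left adjoint $h$ satisfying the Frobenius condition. On closed domains $\lf=\clf$, so this restriction is the $\cf$-basic preimage, which is (iii).

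\textbf{(iii)$\Leftrightarrow$(iv).} The map $a\mapsto\cf a$ is an order-reversing isomorphism $H_i\to\cf H_i$, and $\clf[\cf b]=\cf(f^*(b))$. Transporting along it, a left adjoint $h$ of $\clf$ on closed sets corresponds to a map $g\colon H_1\to H_2$ with $h[\cf a]=\cf g(a)$. The adjunction $h\dashv\clf$ becomes $b\le g(a)\iff f^*(b)\le a$, that is, $f^*\dashv g$. Since $f^*\dashv f$ and adjoints are unique, $g=f$. So $h$ always exists, namely $h[\cf a]=\cf f(a)$.

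For the Frobenius condition, use $\cf a\cap\cf f^*(b)=\cf(a\vee f^*(b))$ and $\cf f(a)\cap\cf b=\cf(f(a)\vee b)$. The condition then reads $\cf f(a\vee f^*(b))=\cf(f(a)\vee b)$, which is (iv) by (2.3.2).

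\textbf{(iv)$\Leftrightarrow$(v).} By the previous paragraph, (iii) says in particular $\rf[\cf a]=h[\cf a]=\cf f(a)$. Since $\rf[\cf a]=f[\cf a]$, this gives (iii)$\Rightarrow$(v). Directly:
\begin{itemize}
\item Assume (iv) and take $y\ge f(a)$. Then $f(a\vee f^*(y))=f(a)\vee y=y$, so $y\in f[\cf a]$. Together with monotonicity of $f$, this gives $f[\cf a]=\cf f(a)$.
\item Conversely, (v) says $f[\cf a]$ is closed, i.e.\ $f$ is closed, which is (i), and the chain above returns (iv).
\end{itemize}

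\textbf{Main obstacle.} I expect the hardest step to be (ii)$\Leftrightarrow$(iii). It requires stating and justifying the dual of Proposition \ref{Joyal-Tierney basic simples} carefully: checking that the meet-representation hypothesis really dualizes, since $\neg(\cf x)=\of x$, and that the finiteness hypotheses hold on both sides. The other steps are routine translations through $a\mapsto\cf a$.
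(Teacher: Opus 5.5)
Your proof is correct. Outside (ii)$\Leftrightarrow$(iii) it is the paper's argument: the same use of Remark~\ref{rema basic closed=closed}, the same transport of $h\dashv\clf$ along the anti-isomorphism $a\mapsto\cf a$ to get $f^\ast\dashv g$ and hence $g=f$, the same Frobenius translation, and the same cycle closing through (iv)$\Rightarrow$(v)$\Rightarrow$(i).

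The one real difference is (ii)$\Leftrightarrow$(iii). The paper applies Theorem~\ref{closedJT}, the dual of Theorem~\ref{Joyal-Tierney basic}. That needs only the lattice fact $\rotatebox[origin=c]{180}{$\Lambda$}^\mathfrak{f}\Lambda^\mathfrak{f}\cf H=\cf H$ in a Heyting lattice, i.e.\ finite meets and joins of closed sets are closed. Then $\lf^{\,\mathfrak{f}\cf}=\clf$, condition (1) is automatic, and the intersections $D_1\cap\dots\cap D_n$ in (2) collapse to a single $\cf b$.

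You instead dualize Proposition~\ref{Joyal-Tierney basic simples}, which is what the paper itself does for open maps in Theorem~\ref{JTtheo}. Your route fits the shape of (iii) directly and only needs $\Lambda^\mathfrak{f}\cf H_i=\cf H_i$. The price is checking the meet-representation hypothesis for $\rf[\cf a]$. You obtain it correctly from $f[\cf a]$ being a nuclear range and Proposition~\ref{prop nuc em bd}, facts you already need for (i)$\Leftrightarrow$(ii), so nothing extra is assumed.

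One harmless slip: in the dualized hypothesis, $V,W$ range over the open domains of $\widetilde{\mathcal T}$, i.e.\ over $\cf H_2$, not over its closed domains. Under either reading, $\{\neg V\vee W\}$ is the same family $\{\cf x\vee\of y\}$, so the argument is unaffected.
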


\begin{proof}
	(i)$\Leftrightarrow$(ii): It follows immediately from Remark \ref{rema basic closed=closed}.
	
	\nid
	(ii)$\Leftrightarrow$(iii): It follows immediately from Theorem \ref{closedJT} and the fact that, in any Heyting lattice $H$,  $\rotatebox[origin=c]{180}{$\Lambda$}^\mathfrak{f}\Lambda^\mathfrak{f}\cf H= \cf H$.
	
	\noindent (iii)$\Leftrightarrow$(iv): Since $f$ is a continuous map, $\lf^\cf(\cf b)=\cf (f^\ast (b))$. Moreover,  since
	\[
			\cf =( a \mapsto \cf a) \colon H  \to \cf H
	\]
	is an anti-isomorphism for every Heyting semilattice, there exists a one-to-one correspondence between maps $h\colon \cf H_1\to H_2$ and maps $f_!\colon H_1\to H_2$, such that for every $h$ its associated $f_!$ makes the following diagram to commute:
\[
\xymatrix@R=30pt@C=40pt{\cf H_1  \ar[r]^{h} & \cf  H_2 \ar[r]^(.55){\lf^\cf} & \cf H_1   \\
H_1 \ar[u]^{\cf} \ar[r]^{f_!} & H_2 \ar[u]^{\cf} \ar[r]^(.5){f^*} &  H_1. \ar[u]_{\cf}}
\]
	Now we use the fact that $\cf\colon H\to \cf H$ is an anti-isomorphism to conclude that $\lf^\cf$ has a left adjoint $h\colon\cf H_1\to \cf H_2$ if and only if $f^\ast\colon H_2\to H_1$ has a right adjoint $f_!\colon H_1 \to H_2$:
\[
\xymatrix@R=30pt@C=30pt{h    & \dashv & \lf^\cf    \\
f^\ast \ar[urr]^(.3){\cf}   & \dashv     &  f_!. \ar[ull]_(.3){\cf}}
\]
	But  $f$ is a continuous map; thus $f^\ast$ always has a right adjoint, namely $f$. It follows that $f_!=f$, and that $\clf$ always has a left adjoint, namely the map given by $h[\cf a]=\cf (f(a))$.
	
	Moreover, by the commutativity of the diagrams and the fact that $f_!=f$, we have
	$$\cf (f(a\vee f^\ast (b)))=h[\cf a \cap \clf[\cf b]] \qtq{and} \cf(f(a)\vee b)=h[\cf a]\cap \cf b. $$
	Hence
	$f(a\vee f^\ast (b))=f(a)\vee b\Leftrightarrow h[\cf a \cap \clf[\cf b]]=h[\cf a]\cap \cf b.$
	
	\noindent (iv)$\Rightarrow$(v): Since $f$ is continuous, it is monotone, and therefore $ f[\cf a]\sue \cf (f(a))$. On the other hand, let $b\in \cf (f(a))$. Then
	$b=f(a)\vee b= f(a\vee f^\ast (b))\in f[\cf a].$
	
	\noindent (v)$\Rightarrow$(i): It is trivial.
\end{proof}

\begin{rema}
In conclusion, we note that theorems \ref{Joyal-Tierney basic} and \ref{closedJT}, applied to the particular case of the category of locales, explain
	why open localic maps and closed localic maps behave so differently, although basic open and basic closed maps between b0-spaces are somehow dual to each other.

Note also that the only difference between the proofs of theorems \ref{JTtheo} and \ref{closedtheo} is that, in the former, $\of \colon L\to \of L$ is an isomorphism, whereas in the latter, $\cf \colon L\to \cf L$ is an anti-isomorphism. This implies that,
when $f$ is open, $f^\ast$ also has a {\em left} adjoint (arising from the left adjoint of $\lf^\of$). In contrast, when $f$ is closed, it implies that $f^\ast$ has a {\em right} adjoint, which is, after all, always true for arbitrary localic maps.
\end{rema}

\section*{Acknowledgments}

The second named author acknowledges partial financial support by the Centre for Mathematics of the University of Coimbra ({\tt doi.org/10.54499/ UID/00324/2025}), funded by the Portuguese Foundation for Science and Technology (FCT), grants UID/00324/2025 and UID/PRR/00324/2025.

\end{document}